\begin{document}
\theoremstyle{plain}
\newtheorem{thm}{Theorem}[section]
\newtheorem{prop}[thm]{Proposition}
\newtheorem{lem}[thm]{Lemma}
\newtheorem{clry}[thm]{Corollary}
\newtheorem{deft}[thm]{Definition}
\newtheorem{hyp}{Assumption}
\newtheorem*{KSU}{Theorem (Kenig, Sj\"ostrand and Uhlmann)}

\theoremstyle{definition}
\newtheorem{rem}[thm]{Remark}
\numberwithin{equation}{section}
\newcommand{\eps}{\varepsilon}
\renewcommand{\d}{\partial}
\newcommand{\dd}{\mathrm{d}}
\newcommand{\e}{\mathrm{e}}
\newcommand{\re}{\mathop{\rm Re} }
\newcommand{\im}{\mathop{\rm Im}}
\newcommand{\ch}{\mathop{\rm ch}}
\newcommand{\R}{\mathbf{R}}
\newcommand{\C}{\mathbf{C}}
\renewcommand{\H}{\mathbf{H}} 
\newcommand{\N}{\mathbf{N}} 
\newcommand{\D}{\mathcal{C}^{\infty}_0} 
\newcommand{\supp}{\mathop{\rm supp}}
\hyphenation{pa-ra-met-ri-zed}
\title[]{Stability estimates for the Calder\'on problem with partial data}
\author[]{Pedro Caro \and David Dos Santos Ferreira \and Alberto Ruiz}
\address{Department of Mathematics and Statistics, University of Helsinki, Helsinki, Finland}
\email{pedro.caro@helsinki.fi}
\address{Institut \'Elie Cartan, Universit\'e de Lorraine, CNRS,  B.P. 70239, F-54506 Vandoeuvre-l\`es-Nancy Cedex, France}
\email{ddsf@math.cnrs.fr}
\address{Departamento de Matem\'aticas, Universidad Aut\'onoma de Madrid, Campus de Cantoblanco, 28049 Madrid, Spain}
\email{alberto.ruiz@uam.es}
\begin{abstract}
   This is a {follow-up} of our previous article \cite{CDSFR} where we proved local stability estimates for a potential in a Schr{\"o}dinger equation on an open bounded set in dimension $n=3$ from the Dirichlet-to-Neumann map with partial data. The region under control was the penumbra delimited by a source of light outside of the convex hull of the open set. These local estimates provided
stability of log-log type corresponding to the uniqueness results in Calder\'on's inverse problem with partial data proved by Kenig, Sj\"ostrand and Uhlmann~\cite{KSU}. In this article, we prove 
the corresponding global estimates in all dimensions higher than three.
The estimates are based on the construction of solutions of the Schr\"odinger equation by complex geometrical optics developed in the anisotropic setting by Dos Santos Ferreira, Kenig, Salo and Uhlmann~\cite{DSFKSU} to solve the Calder\'on problem in certain admissible geometries.
\end{abstract}
\maketitle
\setcounter{tocdepth}{1} 
\tableofcontents
%
%
\begin{section}{Introduction}

The aim of this article is to complete our previous results on the stability of Calder\'on's inverse problem with partial data. In~\cite{CDSFR} we obtained local stability estimates of log-log type on a potential in a Schr\"odinger equation from the Dirichlet-to-Neumann map with partial data in dimension three. Unfortunately, we could only control the potentials in the penumbra region delimited by a source of light located outside of the convex hull of the open set where the potentials were supported. It seems very likely that one could obtain global estimates by a continuation argument but in the gluing process, it might be difficult to avoid {losing} as many logarithms as there are steps, thus yielding very weak stability estimates.

The proof was based on the construction of solutions of the Schr\"odinger equation by complex geometrical optics with nonlinear phases performed by Kenig, Sj\"ostrand and Uhlmann \cite{KSU} and the alternative final argument of the same authors with Dos Santos Ferreira \cite{DSFKSjU} involving the Radon transform. The estimates were thus derived in two steps :
\begin{itemize}
    \item[--] first, controlling the Radon transform with restricted angle-distance data by the Dirichlet-to-Neumann map 
    \item[--] then, controlling a function on some determined subset by this restricted Radon transform.
\end{itemize}
Both steps provided log-stability, which after combination yielded a log-log estimate. The second step was based on the observation that the qualitative microlocal analytic methods used in \cite{DSFKSjU} (and in a slightly more involved way in \cite{KSU}) could actually be transformed into quantitative estimates, and relied on the use of Kashiwara's Watermelon theorem. An observation that we still believe to be of interest. The microlocal nature of the argument (also related to unique continuation) partly explains the reason why one only obtains local estimates. Besides, the restriction to dimension three was justified 
by the fact that in this case, the Radon transform naturally comes in, whereas in higher dimensions, it is replaced by the two-plane transform, and that formulae relating the Radon and the Segal-Bargmann transforms (used in the microlocal arguments) are straightforward. 

In this paper, we choose a different path which directly yields global estimates and applies similarly to all dimensions higher than three. It is based on the methods developped in \cite{DSFKSU} (and refined in \cite{DSFKLS}) to tackle the Calder\'on problem on manifolds under certain geometric assumptions, and which also englobe part of the construction done in \cite{KSU}. This observation has already been made by Kenig and Salo \cite{KS} and is the starting point to improve uniqueness results in the inverse problem with partial data and to unify
different approaches on the subject%
\footnote{That is the construction in \cite{KSU} of complex geometrical optics with nonlinear phases in the Euclidean case, the study of limiting Carleman weights by \cite{DSFKSU} in the manifold case and the use of reflection arguments by Isakov \cite{Is} for open sets for which
parts of  the boundary are hyperplanes or hyperspheres.}. Like the predecessor of this article, the proof of the estimates decomposes into two parts involving this time the (attenuated) geodesic ray transform on the hemisphere, both yielding log-stability. The upgrade to global estimates is allowed because the information provided on the ray transform is not restricted to certain points and directions.

Let us now describe the problem under scope and our results. We consider the  Schr\"odinger equation 
with bounded potential $q \in L^{\infty}(\Omega)$ 
\begin{align}
\label{Intro:DirichletSchrod}
     \begin{cases}
            -\Delta u + q u=0  & \text{ in } \Omega \\ u|_{\d \Omega} = f \in H^{\frac{1}{2}}(\d \Omega)
     \end{cases}.
\end{align}
When $0$ is not a Dirichlet eigenvalue of the Schr\"odinger 
operator $-\Delta +q$, the Dirichlet-to-Neumann map may be defined  by
      $$ \Lambda_{q} f = \d_{\nu} u|_{\d\Omega} $$
where $\nu$ is the exterior unit normal of $\Omega$ {and $\partial \Omega$ denotes the boundary of $\Omega$}. This {is} a bounded operator 
      $$ \Lambda_q:H^{\frac{1}{2}}(\d \Omega) \to H^{-\frac{1}{2}}(\d \Omega) $$ 
---in fact a first order pseudodifferential operator when $q$ {and $\partial \Omega$ are} smooth. The Schr\"odinger equation relates to the conductivity equation involved in the Calder\'on problem, at least for smooth enough conductivities, as observed by Sylvester and Uhlmann in {\cite{SyU}}. In combination with  boundary determination obtained by Kohn and Vogelius \cite{KV} this allowed Sylvester and Uhlmann to solve the Calder\'on problem for twice continuously differentiable conductivities in dimension $n \geq 3$.      
      
The formulation of the inverse problem with partial data is whether the Dirichlet-to-Neumann map $\Lambda_q$ measured on subsets of the boundary determines the electric potential $q$ inside $\Omega$.
In dimension higher than three, the first results with partial data were obtained by Bukhgeim and Uhlmann  \cite{BU} but 
required measurements on roughly half of the boundary. 
The first results which require measurements on small subsets
of the boundary (at least for strictly convex open sets $\Omega$) were
obtained by Kenig, Sj\"ostrand and Uhlmann \cite{KSU}. With the refinements and generalization recently obtained by Kenig and Salo \cite{KS}, they are the most precise results so far in dimension ~$n \geq 3$. Let us describe the results of \cite{KSU} in more details. For that purpose, we introduce the front
and back sets
\begin{align*}
      F(x_0) &= \big\{x \in \d\Omega : \langle x-x_0,\nu \rangle \leq 0 \big\} \\
      B(x_0) &= \big\{x \in \d\Omega : \langle x-x_0,\nu \rangle \geq 0 \big\} 
\end{align*}
of $\bar{\Omega}$ with respect to a source $x_0 \in \R^n \setminus \mathop{\rm ch}(\bar{\Omega})$ outside the convex hull of 
$\bar{\Omega}$ (see figure \ref{fig:FrontBack}). Recall that $\nu$ is the exterior unit normal. The main result of \cite{KSU} reads as follows.
\begin{KSU}
      Let $\Omega$ be a bounded open set in $\R^n, n \ge 3$ with smooth boundary, let $x_0 \notin \ch(\bar{\Omega})$ 
      and consider two open neighbourhoods $\tilde{F},\tilde{B}$ respectively of the front and back sets $F(x_0)$ and $B(x_0)$
      of $\bar{\Omega}$ with respect to $x_0$. Let $q_1,q_2$ be two bounded potentials on $\Omega$, suppose that $0$ is 
      neither a Dirichlet eigenvalue of the Schr\"odinger operator $-\Delta+q_1$ nor of $-\Delta+q_2$, and that for all
      $f \in H^{1/2}(\d\Omega)$ supported in $\tilde{B}$, the two Dirichlet-to-Neumann maps coincide on $\tilde{F}$
           $$ \Lambda_{q_1}f(x) = \Lambda_{q_2}f(x), \quad x \in \tilde{F} $$
      then the two potentials agree $q_1=q_2$.
\end{KSU}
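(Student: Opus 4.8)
The plan is to follow the complex geometrical optics scheme attached to the logarithmic limiting Carleman weight, in the spirit of the admissible-geometry approach of \cite{DSFKSU}. After translating so that $x_0=0$, the function $\varphi(x)=\log|x|$ is a limiting Carleman weight for the Euclidean Laplacian, and since $\partial_\nu\varphi=\langle x,\nu\rangle/|x|^2$ has the same sign as $\langle x-x_0,\nu\rangle$, the front set $F(x_0)$ is precisely $\{\partial_\nu\varphi\le 0\}$ and the back set $B(x_0)$ is $\{\partial_\nu\varphi\ge 0\}$. The starting point is the bilinear identity furnished by Green's formula: if $u_1$ solves $(-\Delta+q_1)u_1=0$ and $u_2$ solves $(-\Delta+q_2)u_2=0$, then
\[ \int_\Omega (q_1-q_2)\,u_1u_2\,\dd x=\int_{\partial\Omega}\big(\partial_\nu u_1\,u_2-u_1\,\partial_\nu u_2\big)\,\dd S. \]
I would construct a large family of such solutions for which the left-hand side localizes and the boundary integral on the right is killed in the limit, so that one is left with an integral constraint on $q_1-q_2$.

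To build the solutions I would first establish a Carleman estimate with boundary terms for $\varphi$. In semiclassical form with $h=1/\tau$, it has the schematic shape, for $w$ vanishing on $\partial\Omega$,
\[ \frac{h}{C}\,\|\e^{\varphi/h}w\|^2+h\!\int_{\partial_+\Omega}\!|\partial_\nu\varphi|\,\big|\partial_\nu(\e^{\varphi/h}w)\big|^2\,\dd S\le C\,\|\e^{\varphi/h}(-\Delta+q)w\|^2+Ch\!\int_{\partial_-\Omega}\!|\partial_\nu\varphi|\,\big|\partial_\nu(\e^{\varphi/h}w)\big|^2\,\dd S, \]
where $\partial_\pm\Omega=\{\pm\partial_\nu\varphi\ge 0\}$. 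The gain of a power of $\tau$ in the interior term reflects that $\varphi$ is a limiting weight, and the decisive feature is the favorable sign of the boundary term on $\partial_+\Omega$. From this estimate and its dual I would solve the conjugated equation to produce CGO solutions of the form $u=\e^{\tau(\varphi+i\psi)}(a+r_\tau)$, where $\psi$ is a conjugate phase, $a$ is a WKB amplitude solving the relevant transport equation, and $\|r_\tau\|=O(\tau^{-1})$. The exponential growth of $u_1$ is then matched against the decay of $u_2$ so that $u_1u_2$ is bounded in the interior, while the boundary term that survives in the identity lives on the side where its sign is controlled and can be discarded using the known equality of the Dirichlet-to-Neumann maps on $\tilde F$ together with the support condition in $\tilde B$.

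The heart of the matter is a conformal reduction. Writing $x=\e^t\omega$ with $t=\log|x|$ and $\omega\in S^{n-1}$, the Euclidean metric becomes $|x|^2(\dd t^2+\dd\omega^2)$, so $\Omega$ embeds conformally in the cylinder $\R\times S^{n-1}$ with transversal manifold a piece of the sphere. Because $x_0\notin\ch(\bar\Omega)$, a separating hyperplane forces the directions $\omega=(x-x_0)/|x-x_0|$, $x\in\bar\Omega$, to fill only an open hemisphere, which is a simple manifold. Choosing the amplitudes $a$ to concentrate along the geodesics of this hemisphere and letting $\tau\to\infty$, the leading term of the bilinear identity becomes the attenuated geodesic ray transform of a conformal multiple of $q_1-q_2$, the attenuation arising from the Fourier variable dual to $t$. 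Injectivity of the attenuated geodesic ray transform on the simple hemisphere then forces $q_1=q_2$.

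I expect the principal obstacle to be the boundary analysis: one must align the growth and decay of the two families of CGO solutions with the front/back splitting so that the uncontrolled Cauchy data enters only through the boundary term with favorable sign in the Carleman estimate, and therefore drops out in the limit. The simplicity of the hemisphere, guaranteed precisely by the hypothesis $x_0\notin\ch(\bar\Omega)$, is what makes the concluding ray-transform inversion available and is the geometric reason the source must be placed outside the convex hull.
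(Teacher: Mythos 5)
Your scheme --- the logarithmic limiting Carleman weight $\log|x-x_0|$, a Carleman estimate with signed boundary terms, CGO solutions built from quasimodes in the warped-product coordinates $x=x_0+\e^t\omega$, and reduction of the bilinear identity to an attenuated geodesic ray transform in the angular variable --- is exactly the scheme this paper implements quantitatively in Sections 3--4 (following \cite{DSFKSU,KS}); the paper itself only quotes the uniqueness theorem from \cite{KSU}. However, your write-up has genuine gaps at the two places where the argument actually closes. First, the open hemisphere is \emph{not} a simple manifold: the paper stresses that the exponential map degenerates at conjugate points on $\overline{S^{d}_+}$, and that it is the caps $S^{n-1}_{>\alpha_0}$ with $\alpha_0>0$, \emph{strictly} smaller than the hemisphere, that are simple. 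What the separating hyperplane provided by $x_0\notin\ch(\bar\Omega)$ actually gives is that the directions $(x-x_0)/|x-x_0|$, $x\in\bar\Omega$, fill such a cap; this is the set on which the inversion must be performed. Second, and more seriously, your final sentence invokes ``injectivity of the attenuated geodesic ray transform'' as a black box, but in dimension $\geq 3$ this was known only for \emph{small} attenuation (\cite{DSFKSU}; arbitrary attenuation is known only on simple surfaces \cite{SaU}), and in this construction the attenuation is not a free parameter: the product of the two CGO solutions produces $T_{2\lambda}$ applied to $Q(\lambda,\cdot)$, the Fourier transform in $t$ of $\e^{2t}q(x_0+\e^t\omega)$ at frequency $2\lambda$, so attenuation and frequency are coupled. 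Small-attenuation injectivity therefore yields only $Q(\lambda,\cdot)=0$ for $|\lambda|\leq\lambda_0$, and you still need a separate step --- since $q_1-q_2$ is bounded with compact support, $\lambda\mapsto Q(\lambda,\omega)$ is entire by Paley--Wiener, so vanishing on an interval forces $Q\equiv 0$ and hence $q_1=q_2$. This analytic-continuation step (the qualitative counterpart of the paper's Lemma 4.1) is absent from your proposal, and without it the conclusion does not follow.

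There is also a third gap which you flag but do not resolve, and which is precisely what separates the statement you set out to prove from what your argument (and the paper's) delivers: the hypothesis only concerns Dirichlet data $f$ supported in $\tilde B$, so in the identity the trace of your CGO solution $u_1$ must itself be supported in $\tilde B$ (up to admissible errors). Nothing in your growth/decay matching accomplishes this; the uncontrolled Cauchy data on $\d\Omega\setminus\tilde B$ is not eliminated by any sign condition in the Carleman estimate. In \cite{KSU} this is handled by constructing exponentially small corrections via a reflection argument to force the support property of the traces, and the present paper explicitly sidesteps the issue by proving its stability theorem for data supported on all of $\d\Omega$ with measurements on $\tilde F$ (see footnote 2). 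As written, your proof establishes at best this simplified version, not the theorem as stated.
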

\begin{center}
\begin{figure}[ht]
     \input{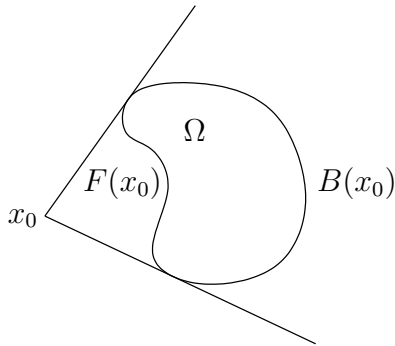}
     \caption{Front and back sets of an open set}
     \label{fig:FrontBack}
\end{figure}
\end{center}

The main goal of this article is to derive the corresponding stability estimates. Stability estimates for the conductivity
inverse problem go back to Alessandrini's article \cite{A}. 
For the inverse problem with partial data, stability estimates corresponding to the results of Bukhgeim and Uhlmann were derived by Heck and Wang \cite{HW},
and in the presence of a magnetic field by Tzou \cite{Tz}. Recently, Caro and Salo proved in \cite{CSa} stability estimates for the Calder\'on problem in certain anisotropic media following an approach quite similar to the one presented here.
Our previous article \cite{CDSFR} was, to the best of our knowledge, the first attempt to obtain stability estimates corresponding to the results of Kenig, Sj\"ostrand and Uhlmann, and only lacked its goal by the local nature of the estimates. {To simplify the exposition of the derivation of our stability estimates, we only consider the case where the Dirichlet-to-Neumann maps are measured on the front set of $\Omega$ but with test functions fully supported in the boundary (instead of the back front set). We believe the extension to the case considered by Kenig, Sj\"ostrand and Uhlmann to be a purely technical matter}%
\footnote{This requires in particular the constuction of exponentially small corrections by a reflection argument to obtain the correct support properties of the traces of the complex geometrical optics solutions on the boundary with the complexified parameter $s \in \C$ considered in this paper, along the lines of the construction done in \cite{KSU}.}.


Let us begin by defining a partial Dirichlet-to-Neumann map; 
let $\tilde{F}$ be an open neighbourhood of the front set $F(x_0)$. Let $\chi_{\tilde{F}}$ be a cutoff function supported in $\tilde{F}$ which equals $1$ on a neighbourhood of $F(x_0)$. We consider the following partial Dirichlet-to-Neumann map
     $$\Lambda_{q}^{\sharp} =  \chi_{\tilde{F}}\Lambda_q. $$
{To obtain reasonable stability results, one needs \textit{a priori} bounds on the potentials in the Shr\"odinger equation. The class of potentials we are considering is the following: }
\begin{align*}
     \mathcal{Q}(M,\sigma) = \big\{ q \in L^{\infty} \cap H^{\sigma}(\R^n) : \supp q \subset \bar{\Omega}, \,\|q\|_{L^{\infty}}+\|q\|_{H^{\sigma}} \leq M \big\}.
\end{align*}
{Our stability result is as follows.}
\begin{thm}
\label{Intro:MainThm}
     Let $\Omega$ be a bounded open set in $\R^n, n \ge 3$ with smooth boundary, let $x_0 \notin \ch(\bar{\Omega})$ 
      and consider an open neighbourhood $\tilde{F}$ of the front set $F(x_0)$
      of $\bar{\Omega}$ with respect to $x_0$. Let $\sigma \in (0,1/2)$ and $M>0$, there exists a constant $C>0$ (depending on $\Omega, \tilde{F}$, $n$, $\sigma$ and $M$) such that for all  bounded potentials $q_1,q_2 \in \mathcal{Q}(M,\sigma)$ such that $0$ is 
      neither a Dirichlet eigenvalue of the Schr\"odinger operator $-\Delta+q_1$ nor of $-\Delta+q_2$, the estimate
     \begin{align*}
          \|q_1-q_2\|_{L^2(\Omega)} \leq C \big|\log \big|\log \|\Lambda^{\sharp}_{q_1}-\Lambda^{\sharp}_{q_2}\|\big|\big|^{-\frac{2\sigma}{3+3\sigma}}
     \end{align*}
     holds true.
\end{thm}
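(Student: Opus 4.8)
The plan is to transplant the Euclidean problem into the product geometry underlying \cite{DSFKSU}. Normalizing $x_0=0$ and setting $\varphi(x)=\log|x-x_0|$, I pass to the coordinates $(t,\omega)\in\R\times S^{n-1}$ with $t=\log|x-x_0|$, in which the Euclidean metric reads $|x-x_0|^2(\dd t^2+g_{S^{n-1}})$; after multiplying solutions by the conformal factor $|x-x_0|^{-(n-2)/2}$, solving $(-\Delta+q)u=0$ becomes a Schr\"odinger problem on the cylinder $\R\times S^{n-1}$ for the rescaled potential $q^\flat=|x-x_0|^2 q$ (up to a fixed conformal correction), and $\varphi=t$ is a limiting Carleman weight. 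Since $x_0\notin\ch(\bar\Omega)$, the factor $|x-x_0|$ is bounded above and below on $\bar\Omega$, so $\|q^\flat\|_{L^2}\simeq\|q_1-q_2\|_{L^2}$ and the a priori class $\mathcal Q(M,\sigma)$ transfers to a comparable bound for $q^\flat$. I begin from the Alessandrini identity
\begin{align*}
   \int_\Omega (q_1-q_2)\,u_1u_2\,\dd x=\big\langle(\Lambda_{q_1}-\Lambda_{q_2})(u_1|_{\d\Omega}),\,u_2|_{\d\Omega}\big\rangle,
\end{align*}
with $u_1,u_2$ complex geometrical optics solutions of the two equations. Choosing the signs of the Carleman parameters so that, up to exponentially small corrections, the trace of $u_2$ is supported in $\tilde F$ (this is where the reflection correction alluded to in the footnote enters, and where $\varphi$ being a \emph{limiting} Carleman weight makes the back-set contribution of $\e^{s\varphi}$ negligible), the right-hand side is controlled by $\|\Lambda^\sharp_{q_1}-\Lambda^\sharp_{q_2}\|$ times the boundary norms of the two traces.

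Next I construct the solutions following \cite{DSFKSU,DSFKLS}. Writing the complexified parameter $s=\tau+i\lambda$ with $\tau$ large, I take
\begin{align*}
   u_j=|x-x_0|^{-\frac{n-2}{2}}\,\e^{-s_jt}\,(a_j+r_j),
\end{align*}
where the leading amplitudes $a_j$ are Gaussian beam quasimodes on $S^{n-1}$ concentrating, as $\tau\to\infty$, on a prescribed geodesic, and the remainders satisfy $\|r_j\|\lesssim\tau^{-1}$ by the Carleman estimate attached to the weight $t$; the boundary traces then grow at most like $\e^{C\tau}$. Inserting these solutions into the identity and letting $\tau\to\infty$, the leading contribution is the \emph{attenuated geodesic ray transform} on $S^{n-1}$ of the partial Fourier transform $\widehat{q^\flat}(\mu,\cdot)$ of $q^\flat$ in the variable $t$, the $t$-frequency $\mu$ being governed by the imaginary parts of the two parameters and the attenuation by the subprincipal data of the beams. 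Balancing the trace growth $\e^{C\tau}$ against the remainder decay $\tau^{-1}$, i.e. choosing $\tau\simeq\big|\log\|\Lambda^\sharp_{q_1}-\Lambda^\sharp_{q_2}\|\big|$, yields the first logarithm: for the relevant geodesics (those meeting the hemisphere seen from $x_0$) one obtains
\begin{align*}
   \big\|I^{\mu}\big(\widehat{q^\flat}(\mu,\cdot)\big)\big\|\leq\delta,\qquad \delta\simeq\big|\log\|\Lambda^\sharp_{q_1}-\Lambda^\sharp_{q_2}\|\big|^{-1},
\end{align*}
uniformly for $\mu$ in a range that may be taken to grow with $\tau$.

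The second step inverts this transform. Because $x_0$ lies outside the convex hull, only a hemisphere of $S^{n-1}$ is seen and the associated geodesic ray transform is \emph{unrestricted} there; this is precisely what upgrades the estimate from local to global, since the angular variable of $\widehat{q^\flat}(\mu,\cdot)$ is fully recovered through a stability estimate for the attenuated transform on this (simple) subdomain. The essential feature is that the inversion constant degrades as the attenuation, hence $|\mu|$, grows, typically exponentially: splitting the Fourier synthesis in $t$ into the band $|\mu|\leq\mu_0$, controlled through $\delta$ with this growing constant, and the tail $|\mu|>\mu_0$, controlled by the a priori bound $\lesssim\mu_0^{-\sigma}M$, and optimizing $\mu_0$ produces the \emph{second} logarithm. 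Concretely one is led to balance a term of size $\e^{c\mu_0}\delta$ against $\mu_0^{-\sigma}$, which forces $\mu_0\simeq|\log\delta|=\log\big|\log\|\Lambda^\sharp_{q_1}-\Lambda^\sharp_{q_2}\|\big|$ and hence a bound of log--log type for $\|q^\flat\|_{L^2}$, and therefore for $\|q_1-q_2\|_{L^2}$. The precise exponent $\tfrac{2\sigma}{3+3\sigma}$ emerges from this optimization together with the interpolation needed to pass from the norm in which the transform is controlled back to $L^2$, against the $H^\sigma$ a priori bound.

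The step I expect to be the main obstacle is the quantitative control of the attenuated geodesic ray transform on the hemisphere: one needs an injectivity and stability estimate whose dependence on the attenuation $\mu$ is explicit and uniform, and this must be dovetailed with the Gaussian beam construction so that the concentration error, the remainder decay $\tau^{-1}$ and the boundary trace growth $\e^{C\tau}$ combine with controlled dependence on $\tau$ and $\mu$. Equally delicate is guaranteeing that the front-set support of the traces survives the complexification $s=\tau+i\lambda$ needed to reach all frequencies $\mu$, which is the technical point deferred to the footnote; keeping track of all these dependences is what ultimately pins down the exponent $\tfrac{2\sigma}{3+3\sigma}$.
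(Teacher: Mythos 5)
Your overall architecture (log-radial coordinates, the warped-product/cylinder picture, CGO solutions attached to the limiting Carleman weight $\log|x-x_0|$, Alessandrini's identity, a first logarithm from balancing $\e^{C\tau}$ against $\tau^{-1}$, and reduction to the attenuated geodesic ray transform on the hemisphere) matches the paper. But the mechanism you propose for the \emph{second} logarithm has a genuine gap: you assume a stability estimate for the attenuated geodesic ray transform that is valid for \emph{large} attenuation $\mu$, with an explicit constant growing like $\e^{c\mu}$, and you then optimize a frequency cutoff $\mu_0$ against the $H^\sigma$ tail. No such estimate is proved in the paper or available in the literature it cites: the paper's Theorem \ref{Xray:AttXraySpherEst} is proved \emph{only for small attenuation} $0\le\lambda\le\lambda_0$, by a perturbation argument off the unweighted Euclidean X-ray transform after stereographic projection, and the remark following it only shows a \emph{lower} bound (the constant would have to be at least exponential in $\lambda$) --- it explicitly declines to prove the upper bound you need, precisely because the authors found a way around it. Since in this construction the $t$-frequency of $Q(\lambda,\cdot)$ and the attenuation are locked together (both equal $2\lambda$, coming from $\im s$), you cannot reach large frequencies without facing exactly this unproved large-attenuation inversion. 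The paper's actual route is different: it accepts control of $Q(\lambda,\cdot)$ only on the fixed small band $|\lambda|\le\lambda_0$, and then exploits the compact support of $t\mapsto \e^{2t}q(x_0+\e^t\omega)$ through a quantitative analytic-continuation lemma (Lemma \ref{CGO:HarmLem}: a harmonic-majorant/two-constants argument on the upper half-plane for the Hilbert-space-valued Fourier transform), which converts smallness of $\widehat{q}$ on a fixed interval into $L^2$ smallness of $q$ at the cost of exactly one logarithm, with the exponent $\tfrac{2\sigma}{3+3\sigma}$ coming out of that lemma's optimization combined with the interpolation against the $H^\sigma$ bound. This analytic-continuation step, which is the heart of the paper's second logarithm, is entirely absent from your proposal.

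A secondary divergence: to handle the partial data you propose to make the trace of $u_2$ supported in $\tilde F$ ``up to exponentially small corrections'' by a reflection argument. That is precisely the construction the paper defers to a footnote as unproven technical work (it is delicate with the complexified parameter $s\in\C$). The paper instead keeps arbitrary traces and controls the unmeasured part of the boundary integral by a \emph{global Carleman estimate with boundary terms} (Theorem \ref{CGO:CarlEstThm} and Corollary \ref{CGO:CarlDNClry}), which bounds the weighted $L^2$ norm of $(\Lambda_{q_1}-\Lambda_{q_2})u_1$ on $\d\Omega\setminus F_\delta(x_0)$ by its norm on the front set plus a $\tau^{-1/2}$ interior term. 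So as written, both pillars of your plan rest on ingredients that are neither proved by you nor available: the large-attenuation stability estimate, and the reflection correction. Each could in principle be replaced by the paper's corresponding device, but those devices (the Carleman estimate with boundary terms, and above all the harmonic-majorant lemma) are the substantive content of the proof.
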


The conductivity equation in  $\Omega$ reads as follows
\begin{align}
\label{Intro:DirichletCond}
     \begin{cases}
           \mathop{\rm div}(\gamma \nabla u) =0  & \text{ in } \Omega \\ u |_{\d \Omega} = f \in H^{\frac{1}{2}}(\d \Omega)
     \end{cases}
\end{align}
where $\gamma$ is a positive function of class $\mathcal{C}^2$ on $\overline{\Omega}$. The Dirichlet-to-Neumann map is defined as before
     $$ \Lambda_{\gamma} f = \gamma \d_{\nu} u|_{\d\Omega} $$
where $\d_{\nu}$ denotes the exterior normal derivative of $u$. 
With a slight abuse of notations, we use the convention that whenever the subscript contains the letter $q$, the notation
refers to the Dirichlet-to-Neumann map related to the Schr\"odinger equation \eqref{Intro:DirichletSchrod}, while
if it contains the letter $\gamma$ it refers to the map related to the conductivity equation \eqref{Intro:DirichletCond}.
The inverse problem formulated%
\footnote{ In fact in its initial formulation, the problem concerns only measurable conductivities bounded from above and below, and remains so far open in dimensions higher than three.} by Calder\'on \cite{C} is whether it is possible to determine $\gamma$ from~$\Lambda_{\gamma}$.
The class of admissible conductivities that we will consider is:
\begin{align*}
     \mathcal{G}(M,\sigma) = \big\{ \gamma \in W^{2,\infty}\cap H^{2+\sigma}(\R^n) : \supp \gamma \subset \bar{\Omega}, \|\gamma\|_{W^{2,\infty}} {+ \| \gamma \|_{H^{2+\sigma}}} \leq M \big\}.
\end{align*}
\begin{clry}
\label{Intro:MainClry}
      Let $\Omega$ be a bounded open set in $\R^n, n \ge 3$ with smooth boundary, let $x_0 \notin \ch(\bar{\Omega})$ 
      and consider an open neighbourhood $\tilde{F}$ of the front set $F(x_0)$
      of $\bar{\Omega}$ with respect to $x_0$. Let $\sigma \in (0,1]$ and $M>0$, there exists a constant $C>0$ (depending on $\Omega, \tilde{F}$, $n$, $\sigma$ and $M$) such that for all  bounded conductivities $\gamma_1,\gamma_2 \in \mathcal{G}(M,\sigma)$ the estimate
	\begin{align*}
		\| \gamma_1 - \gamma_2 \|_{H^1(\Omega)} \leq C \bigg|\log \Big|\log \Big(& \big\|\Lambda^{\sharp}_{\gamma_1}-\Lambda^{\sharp}_{\gamma_2} \big\|
		+ \| \gamma_1 - \gamma_2 \|_{L^\infty(\partial \Omega)}\\
		 & + \| \nabla \gamma_1 - \nabla \gamma_2 \|_{L^\infty(\partial \Omega)} \Big)\Big| \bigg|^{-\frac{2\sigma}{3+3\sigma}}
	\end{align*}
holds true.
\end{clry}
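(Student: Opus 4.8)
The plan is to reduce the conductivity problem to the Schr\"odinger problem of Theorem~\ref{Intro:MainThm} by the classical substitution of Sylvester and Uhlmann~\cite{SyU}, and then to convert the resulting potential estimate back into an estimate on the conductivity. For $\gamma_i \in \mathcal{G}(M,\sigma)$ I would set
\begin{align*}
    q_i = \frac{\Delta \sqrt{\gamma_i}}{\sqrt{\gamma_i}}, \qquad i=1,2.
\end{align*}
Since the conductivities are positive and, by the a priori bound, uniformly bounded from above and below with two bounded derivatives, the functions $\sqrt{\gamma_i}$ inherit the $W^{2,\infty} \cap H^{2+\sigma}$ regularity of $\gamma_i$, so that $q_i \in L^{\infty} \cap H^{\sigma}$ with $q_i \in \mathcal{Q}(M',\sigma)$ for some $M'=M'(M,\Omega)$. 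Moreover $v_i = \sqrt{\gamma_i}$ is a positive solution of $-\Delta v_i + q_i v_i = 0$ which does not vanish on $\partial \Omega$; hence $0$ lies strictly below the first Dirichlet eigenvalue of $-\Delta + q_i$, the gap being uniform over the class. In particular $0$ is not a Dirichlet eigenvalue, which is why the corollary needs no spectral hypothesis, and the solution operators are uniformly invertible.

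The first real step is to compare the two partial Dirichlet-to-Neumann maps. If $u$ solves the conductivity equation~\eqref{Intro:DirichletCond} with datum $f$, then $v = \sqrt{\gamma_i}\, u$ solves the Schr\"odinger equation with datum $\sqrt{\gamma_i}|_{\partial\Omega}\, f$, and computing the exterior normal derivative on the boundary yields the Sylvester--Uhlmann identity
\begin{align*}
    \Lambda_{q_i} g = \gamma_i^{-1/2}\,\Lambda_{\gamma_i}\bigl(\gamma_i^{-1/2} g\bigr)\big|_{\partial\Omega} + \tfrac{1}{2}\bigl(\gamma_i^{-1}\partial_\nu \gamma_i\bigr)\, g\big|_{\partial\Omega}.
\end{align*}
Multiplying by $\chi_{\tilde F}$, subtracting the two identities, and using that the multiplication operators attached to $\gamma_i^{\pm 1/2}$ and $\gamma_i^{-1}\partial_\nu\gamma_i$ are bounded on the trace spaces uniformly in the class, one controls the terms in which $\gamma_1$ and $\gamma_2$ fail to cancel by the $W^{1,\infty}(\partial\Omega)$ norm of $\gamma_1 - \gamma_2$. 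This gives
\begin{align*}
    \bigl\|\Lambda^{\sharp}_{q_1}-\Lambda^{\sharp}_{q_2}\bigr\| \lesssim \bigl\|\Lambda^{\sharp}_{\gamma_1}-\Lambda^{\sharp}_{\gamma_2}\bigr\| + \|\gamma_1-\gamma_2\|_{L^\infty(\partial\Omega)} + \|\nabla\gamma_1-\nabla\gamma_2\|_{L^\infty(\partial\Omega)},
\end{align*}
with constant depending only on $M$ and $\Omega$. Applying Theorem~\ref{Intro:MainThm} to $q_1,q_2$ then bounds $\|q_1-q_2\|_{L^2(\Omega)}$ by a double logarithm of the right-hand side of the corollary.

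It remains to translate this into an estimate on $\gamma_1 - \gamma_2$. Writing $w = \sqrt{\gamma_1} - \sqrt{\gamma_2}$ and subtracting the identities $\Delta\sqrt{\gamma_i} = q_i \sqrt{\gamma_i}$ gives the boundary value problem
\begin{align*}
    (-\Delta + q_1)\, w = -(q_1-q_2)\sqrt{\gamma_2} \ \text{ in } \Omega, \qquad w|_{\partial\Omega} = \bigl(\sqrt{\gamma_1}-\sqrt{\gamma_2}\bigr)\big|_{\partial\Omega}.
\end{align*}
By the uniform invertibility of $-\Delta + q_1$ together with the trace bound $\|w\|_{H^{1/2}(\partial\Omega)} \lesssim \|\gamma_1-\gamma_2\|_{L^\infty(\partial\Omega)} + \|\nabla\gamma_1-\nabla\gamma_2\|_{L^\infty(\partial\Omega)}$ (obtained by interpolating on the boundary and using the lower bound on $\gamma_i$), one gets
\begin{align*}
    \|w\|_{H^1(\Omega)} \lesssim \|q_1-q_2\|_{L^2(\Omega)} + \|\gamma_1-\gamma_2\|_{L^\infty(\partial\Omega)} + \|\nabla\gamma_1-\nabla\gamma_2\|_{L^\infty(\partial\Omega)}.
\end{align*}
Finally the factorization $\gamma_1-\gamma_2 = w\,(\sqrt{\gamma_1}+\sqrt{\gamma_2})$ and the uniform $W^{1,\infty}$ bounds on $\sqrt{\gamma_i}$ yield $\|\gamma_1-\gamma_2\|_{H^1(\Omega)} \lesssim \|w\|_{H^1(\Omega)}$, and combining the three steps gives the announced estimate.

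I expect the main obstacle to lie not in this skeleton, which is essentially algebraic, but in the uniform control of the constants and in the bookkeeping of the Sobolev exponents. One must verify that the Liouville substitution does not degrade the admissible class --- which rests on a uniform positivity and lower bound for the conductivities, implicitly furnished by the a priori bound --- that the $W^{1,\infty}(\partial\Omega)$ data really suffices to dominate both the difference of conjugation operators in the first step and the trace of $w$ in the last step, and above all that applying Theorem~\ref{Intro:MainThm} reproduces the exponent $-\tfrac{2\sigma}{3+3\sigma}$. For $\sigma \in (0,1/2)$ this is immediate, while for $\sigma \in [1/2,1]$ the truncation forced by the hypothesis of the theorem must be compensated by interpolating the conductivity estimate against the a priori $H^{2+\sigma}$ bound; the passage from $\|q_1-q_2\|_{L^2}$ to $\|\gamma_1-\gamma_2\|_{H^1}$ being itself lossless in the exponent.
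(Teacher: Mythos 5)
Your proposal follows essentially the same route as the paper: the Liouville substitution $q_i=\Delta\sqrt{\gamma_i}/\sqrt{\gamma_i}$, the conjugation identity for the Dirichlet-to-Neumann maps yielding
$\|\Lambda^{\sharp}_{q_1}-\Lambda^{\sharp}_{q_2}\| \lesssim \|\Lambda^{\sharp}_{\gamma_1}-\Lambda^{\sharp}_{\gamma_2}\| + \|\gamma_1-\gamma_2\|_{L^\infty(\partial\Omega)} + \|\nabla\gamma_1-\nabla\gamma_2\|_{L^\infty(\partial\Omega)}$
(the paper proves exactly this lemma, by the same telescoping of the conjugation identity), the application of Theorem \ref{Intro:MainThm}, and an elliptic estimate converting $\|q_1-q_2\|_{L^2(\Omega)}$ into $\|\gamma_1-\gamma_2\|_{H^1(\Omega)}$. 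The only structural deviation is the last step: you solve $(-\Delta+q_1)w=-(q_1-q_2)\sqrt{\gamma_2}$ for $w=\sqrt{\gamma_1}-\sqrt{\gamma_2}$, which needs the uniform invertibility of $-\Delta+q_1$ that you correctly extract from the positive solution $\sqrt{\gamma_1}$; the paper instead observes that $u=\log\sqrt{\gamma_1}-\log\sqrt{\gamma_2}$ solves the uniformly elliptic divergence-form equation $\nabla\cdot(\sqrt{\gamma_1\gamma_2}\,\nabla u)=\sqrt{\gamma_1\gamma_2}\,(q_1-q_2)$, so standard elliptic theory applies with no spectral discussion whatsoever. Both variants work, and your remark that the corollary needs no Dirichlet-eigenvalue hypothesis (because $\sqrt{\gamma_i}>0$ solves the Schr\"odinger equation) is correct and is indeed the reason the statement carries none.

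The one flawed point is your proposed treatment of $\sigma\in[1/2,1]$. Interpolating the conductivity estimate against the a priori $H^{2+\sigma}$ bound cannot restore the exponent: interpolation multiplies a $\log$-$\log$ rate by a factor strictly less than one and never improves it, so applying Theorem \ref{Intro:MainThm} with a truncated $\sigma'<1/2$ would leave you with the strictly worse exponent $\frac{2\sigma'}{3+3\sigma'}$. The actual resolution --- implicit in the paper, which simply invokes its Schr\"odinger estimate for the given $\sigma$ --- is that no truncation is necessary: the restriction $\sigma<1/2$ in Theorem \ref{Intro:MainThm} only reflects that for general potentials the extension by zero of $q_1-q_2$ lies in $H^{\sigma}(\R^n)$, as membership in $\mathcal{Q}(M,\sigma)$ demands, precisely in that range; here the potentials $\Delta\sqrt{\gamma_i}/\sqrt{\gamma_i}$ built from $\gamma_i\in\mathcal{G}(M,\sigma)$ belong to $H^{\sigma}(\R^n)$ with support in $\bar{\Omega}$ for every $\sigma\in(0,1]$, and with that membership granted the whole argument of section 4 (interpolation between $H^{-2}$ and $H^{\sigma}$, Remark \ref{Xray:SobolevContinuityRem}, Lemma \ref{CGO:HarmLem}) runs verbatim and delivers the exponent $\frac{2\sigma}{3+3\sigma}$ in the full range.
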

\begin{rem} As a consequence of this corollary, one can also get control of $\| \gamma_1 - \gamma_2 \|_{L^\infty}$. Indeed, it is enough to interpolate between $H^1$ and $W^{1,\infty}$ to get control of $\| \gamma_1 - \gamma_2 \|_{W^{1,p}}$ with $p > n$ and then use Sobolev embeddings.
\end{rem}

In the second section, we study the attenuated geodesic ray transform on a hemisphere; the results are classical but given the plain context we choose a very explicit and pedestrian approach. This could also be thought as an introduction on the subject of geodesic ray transforms on manifolds with the example of the hypersphere. In the third section, we construct solutions to the Schr\"odinger equation by complex geometrical optics, {using the approach developped in the anisotropic setting in \cite{DSFKSU} and in \cite{DSFKLS}.}
In section four, we establish the stability estimates of Theorem \ref{Intro:MainThm}; {first we show how to extend bounds on the Fourier transform of a function in a given interval to a larger set of frequencies, then we control a mixed Fourier transform in the radial variables, and geodesic ray transform in the angular variable of the potentials by the Dirichlet-to-Neumann map and finish by using the stability estimates derived in section two.}
In the last section, we go back to the conductivity equation (and thus to Calder\'on's problem) and prove Corollary \ref{Intro:MainClry}.

\subsection*{Acknowledgements} David DSF wishes to thank Francis Chung for useful discussions around the Calder\'on problem with partial data.

\subsection*{Notations}  We denote $\H=\{s\in \C : \im s>0\}$ Poincar\'e's upper half plane.
The convention that we are using on the Fourier transform is the following
      $$ \widehat{f}(\lambda) = \int_{-\infty}^{\infty} \e^{- i \ell \lambda} f(\ell) \, \dd \ell. $$
We use the classical Japanese brackets notation 
   $$ \langle z \rangle = \sqrt{1+|z|^2}, \quad z \in \R^n. $$   
{Moreover, if $y \in \R^n$ then $y^\perp$ denotes the hyperplane orthogonal to $y$.}
Finally $(S^d,g_d)$ denotes the hypersphere of dimension $d$ in $\R^{d+1}$ endowed with the canonical metric $g_d$.
\end{section}
%
%
\section{The attenuated geodesic ray transform on a hemisphere}
\label{Xray:sec}

We consider the northern hemisphere (with north pole $e_{d+1}=(0,\dots,0,1)$)
    $$ S_+^d = \big\{ x \in S^d : x_{d+1}>0 \big\} \subset \R^{d+1} $$
the boundary of which is the equator
    $$ \d S^d_+ = \big\{ (x',0) \in S^d: x' \in S^{d-1}\big\} \simeq S^{d-1}. $$
The exterior unit normal to the boundary is everywhere $(0,-1)$ and the
set of inward pointing unit tangent vectors 
\begin{align*}
     \d_+S(S^d_+) = \big\{(x,\xi) \in S^d \times S^d : x_{d+1} = 0, \xi_{d+1}>0, \langle x',\xi' \rangle =0 \big\}
\end{align*}
can be identified with a subset of $\d S_+^d \times S^d_+$. Geodesics starting at the boundary {of $S_+^d$} are half great circles which can be parametrized in the following way
\begin{align*}
     \gamma_{x',\xi}(t) &= \cos t \, (x',0) + \sin t \, \xi = (\cos t \, x'+\sin t \, \xi',\sin t \, \xi_{d+1}) \\ &= \exp_{(x',0)} (t \xi)
\end{align*}
with $\langle x',\xi' \rangle =0$ and $\xi_{d+1}>0$. The attenuated geodesic ray transform of a function is defined as 
\begin{align*}
     T_{\lambda}^+f(x',\xi) = \int_0^{\pi} f(\cos t \, (x',0) + \sin t \, \xi) \, \e^{-\lambda t} \, \dd t
\end{align*}
with $\lambda \geq 0$. Recall that the distance on the sphere is given by
     $$ d_{S^d}(x,y) = \arccos \langle x,y\rangle. $$

\subsection{Caps strictly smaller than hemispheres are simple manifolds}   

Let $\alpha_0 \in ]-1,1[$, we denote  
\begin{align*}
     S^{d}_{>\alpha_0} &= \big\{ x \in S^{d} : x_{d+1} > \alpha_0 \big\} 
\end{align*}
caps on the hypersphere; these are strictly smaller than the northern hemisphere $S^{n-1}_{>\alpha_0} \subsetneq S^d_+$ provided $\alpha_0>0$.
\begin{deft}
      A compact manifold with boundary $(M,g)$ is said to be \emph{simple} if its boundary $\d M$ is stricly convex (that is its second fundamental form is definite positive), and if the exponential map $\exp_x : U_x \to M$ defined on its maximal set of definition is a diffeomorphism for all $x \in M$. 
\end{deft}  
The importance of the notion of simple manifolds is partly justified by the fact that the geodesic ray transform is known to be injective on such manifolds. The boundary
     $$ \d S^d_{>\alpha_0} =  \big\{ x \in S^{d} : x_{d+1} = \alpha_0 \big\} $$
of a cap is stricly convex if and only if the cap is strictly smaller than the hemisphere. Indeed $x \mapsto (\alpha_0-x_{d+1})$ is a boundary defining function, whose exterior derivative $-\dd x_{d+1}$ is the exterior unit conormal, and 
\begin{align*}
     \nabla^2 x_{d+1} (\xi \otimes \xi) = \frac{\d^2}{\d t^2} \bigg(\cos t\, x_{d+1}+\sin t \, \xi_{d+1}\bigg)_{t=0} = -\alpha_0
\end{align*}
thus the second fundamental form of $\d S^d_{>\alpha_0}$ is definite positive if and only if $\alpha_0>0$. The exponential map $\exp_{(x',0)}: T_{(x',0)}S^d_+ \to  S^d_+ $ is a diffeomorphism from $\{\xi \in T_{(x',0)}S^d_+ : \xi_{d+1}>0\}$ to $S^d_+$:
\begin{align*}
     \text{if } \eta = \exp_{(x',0)}^{-1}(y), \quad y \in S^d_+ &\text{ then }
    |\eta| = d_{S^{d}}((x',0),y)= \arccos \langle x',y'\rangle \\ &\text{ and }
    \frac{\eta}{|\eta|} = \frac{y-\langle x',y'\rangle \, (x',0)}{\sqrt{1- \langle x',y'\rangle^2}}.
\end{align*}
This shows that any cap stricly smaller than the hemisphere is a \textit{simple} manifold.
Of course, the former inverse breaks down when $\langle x',y' \rangle=\pm 1$ i.e. when $y$ and $(x',0)$ are equal or conjugate points (which may occur only on $\overline{S^{d}_+}$), giving further evidence that the full northern hemisphere $\overline{S^{d}_+}$ is far from being a \textit{simple} manifold. 

The aim of this section is to prove that the attenuated geodesic transform restricted to functions supported in the open northern hemisphere is injective for small values of the attenuation parameter $\lambda>0$ and to derive the corresponding stability estimates. 
We will restrict our attention to functions which are supported in a cap strictly smaller than the northern hemisphere
     $$ S^d_{>\alpha_0} = \big\{x \in S^d : x_{d+1} > \alpha_0 \big\}, \quad 0<\alpha_0<1, $$
(a simple manifold). 
We will use the following spaces and norms: on $\d_+S(S^d_+)$ one introduces the measure
     $$ \dd\mu = \xi_{d+1} \, \dd x' \, \dd \xi $$
(where $\dd x'$ is the measure on the hypersphere $S^{d-1}$ and $\dd \xi$ the measure on the hypersphere $S^d \cap (x',0)^{\perp}$) and the corresponding $L^2$ norm
     \begin{align*}
         \|F\|_{L^2(\d_+S(S^d_+))}  &= \bigg(\int_{\d_+S(S^d_+)} |F(x',0,\xi)|^2\, \dd \mu( x',\xi) \bigg)^{\frac{1}{2}} \\ 
         &=  \bigg(\int_{S^{d-1}} \int_{S^d_+ \cap (x',0)^{\perp}}  |F(x',0,\xi)|^2\,  \xi_{d+1} \,\dd \xi \, \dd x' \bigg)^{\frac{1}{2}}.
     \end{align*}
This choice is motivated by Santal\'o's formula%
\footnote{Santal\'o's formula \cite{S} is slightly more general and actually applies to integrals on the cosphere bundle $S(S^d_+)$.}
\begin{align*}
     \int_{S^d_+} f(x) \, \dd x = \frac{1}{|S^{d-1}|} \int_{\d_+S(S^d_+)} \int_0^{\pi} f(\exp_{(x',0)}(t \xi)) \,\dd t \, \dd \mu
\end{align*}
which immediately implies the $L^2$ continuity of the attenuated geodesic ray transform
\begin{align*}
      \int_{\d_+S(S^d_+)} |T^+_{\lambda}f|^2 \, \dd \mu &\leq \pi  \int_{\d_+S(S^d_+)} \int_0^{\pi} |f(\exp_{(x',0)}(t \xi))|^2 \,\dd t \, \dd \mu \\ &= \pi |S^{d-1}| \, \int_{S^d_+} |f(x)|^2 \, \dd x.
\end{align*}
\begin{rem}
\label{Xray:SobolevContinuityRem}
     Note that again from Santalo's formula
     \begin{align*}
      \int_{\d_+S(S^d_+)} |\dd T^+_{\lambda}f|^2 \, \dd \mu &\leq \pi  \int_{\d_+S(S^d_+)} \int_0^{\pi} |\dd f(\exp_{(x',0)}(t \xi))|^2 \,\dd t \, \dd \mu \\ &= \pi |S^{d-1}| \, \int_{S^d_+} |\dd f(x)|^2 \, \dd x.
\end{align*}
     it is easy to get the boundedness of the geodesic ray transform from $H^1(S^{d}_+)$ to $H^1(\d_+S(S^d_+))$. By interpolation, this transform is also continuous from $H^{\sigma}(S^{d}_+)$ to $H^{\sigma}(\d_+S(S^d_+))$ when $\sigma \in[0,1]$.
\end{rem}
The following result is classical \cite{FSU,SU,Sh} and the reader familiar with geometric tomography can skip the section. 
We provide proofs in the case of the hyperpshere because computations are fairly explicit in this case and can serve as an introduction to such tomography problems on manifolds.
\begin{thm}
\label{Xray:AttXraySpherEst}
     Let $\alpha_0 \in (0,1)$, there exist two constants $\lambda_0>0$ and $C>0$ such that for all
     $0 \leq \lambda \leq \lambda_0$ and all $f \in L^2(S^d_+)$ supported in $S^d_{>\alpha_0}$ one has the following estimate
     \begin{align*}
          C\|T_{\lambda}^+f\|_{L^2(\d_+S(S^d_+))} \geq \|f\|_{H^{-\frac{1}{2}}(S^d_+)}.
     \end{align*}
\end{thm}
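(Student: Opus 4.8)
The plan is to reduce the estimate to the analysis of the normal operator
\[
   N_\lambda = (T^+_\lambda)^* T^+_\lambda,
\]
where the adjoint is taken with respect to the measure $\dd\mu$ on $\d_+S(S^d_+)$ and the Riemannian volume on $S^d_+$. The point of the reduction is the polarisation identity $\langle N_\lambda f, f\rangle_{L^2(S^d_+)} = \|T^+_\lambda f\|_{L^2(\d_+S(S^d_+))}^2$, valid by the $L^2$-continuity already established through Santal\'o's formula. Thus it suffices to prove that $N_\lambda$ is a self-adjoint, nonnegative, \emph{elliptic} pseudodifferential operator of order $-1$ on a neighbourhood of the cap $S^d_{>\alpha_0}$: a sharp G\aa{}rding inequality will then bound $\|f\|_{H^{-1/2}}^2$ from below by $\langle N_\lambda f, f\rangle = \|T^+_\lambda f\|_{L^2}^2$ modulo a lower order term, which I will remove using injectivity.

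First I would compute the Schwartz kernel of $N_\lambda$ explicitly. Folding the adjoint into $T^+_\lambda$ via Santal\'o's formula, for $x,y$ in the cap one integrates over the unique geodesic joining $x$ and $y$ (the cap being a \emph{simple} manifold, as established above), picking up the attenuation factor $\e^{-\lambda\, d_{S^d}(x,y)}$ together with a Jacobian. To make this transparent I would pass to the gnomonic (central) projection $x=(x',x_{d+1}) \mapsto x'/x_{d+1} \in \R^d$, which maps $S^d_+$ diffeomorphically onto $\R^d$, the cap onto the ball of radius $\sqrt{1-\alpha_0^2}/\alpha_0$, and---crucially---every great circle onto a straight line. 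Under this change of variables $T^+_\lambda$ becomes a weighted Euclidean X-ray transform whose weight is smooth and strictly positive, reducing to a constant when $\lambda=0$. Consequently $N_\lambda$ has a kernel with diagonal singularity of Euclidean type $\asymp |x-y|^{-(d-1)}$ (smoothly weighted), which identifies it as a classical pseudodifferential operator of order $-1$; its principal symbol at $\lambda=0$ is a positive constant multiple of $|\xi|_g^{-1}$, hence elliptic, and ellipticity persists for $0\le\lambda\le\lambda_0$ by continuity. This is where the restriction to small $\lambda$ first enters, and where the hypothesis $\supp f \subset S^d_{>\alpha_0}$ is essential: it keeps us strictly inside the simple region, away from the equator where conjugate points appear.

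Next I would enlarge the geometry slightly, realising the cap inside a marginally larger cap that is still simple, so that $N_\lambda$ acts as a genuine elliptic $\Psi$DO of order $-1$ on an open neighbourhood of $\supp f$. The sharp G\aa{}rding inequality for the nonnegative self-adjoint operator $N_\lambda$ then yields, for some $c>0$,
\[
   \|T^+_\lambda f\|_{L^2(\d_+S(S^d_+))}^2 = \langle N_\lambda f, f\rangle_{L^2(S^d_+)} \ge c\,\|f\|_{H^{-1/2}(S^d_+)}^2 - C\,\|f\|_{H^{-1}(S^d_+)}^2
\]
for all $f$ supported in the cap. It remains to absorb the lower order term. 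Since the inclusion $H^{-1/2}(S^d_+) \hookrightarrow H^{-1}(S^d_+)$ is compact, a standard compactness–contradiction argument removes it provided $T^+_\lambda$ is injective on functions supported in $S^d_{>\alpha_0}$: a hypothetical sequence $f_k$ with $\|f_k\|_{H^{-1/2}}=1$ and $\|T^+_\lambda f_k\|_{L^2}\to 0$ has a subsequence converging in $H^{-1}$ to some $f$ with $T^+_\lambda f=0$, hence $f=0$, which contradicts $\|f_k\|_{H^{-1/2}}=1$ once fed back into the G\aa{}rding estimate. Injectivity at $\lambda=0$ is the classical injectivity of the geodesic ray transform on a simple manifold, and it persists for small $\lambda$ by perturbation; this is the second place where $\lambda\le\lambda_0$ is used.

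The main obstacle is the second paragraph: establishing cleanly that $N_\lambda$ is an elliptic $\Psi$DO of order $-1$ with the correct diagonal singularity and a positive principal symbol. The gnomonic reduction to the Euclidean weighted X-ray transform is what makes this \emph{pedestrian}, turning the spherical computation into the familiar $|x-y|^{-(d-1)}$ kernel; the remaining bookkeeping is controlling the Jacobian and the attenuation weight and checking that smallness of $\lambda$ destroys neither ellipticity nor injectivity. Everything else---the polarisation identity, G\aa{}rding, and the compactness argument---is routine once the pseudodifferential structure is in hand.
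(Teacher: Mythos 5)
Your reduction to the Euclidean weighted X-ray transform via the central projection $x \mapsto x'/x_{d+1}$ (the paper uses exactly this map, under the name stereographic projection) and your identification of the normal operator as a pseudodifferential operator of order $-1$ coincide with the technical core of the paper's proof. But the way you close the argument has a genuine gap: the injectivity of $T^+_\lambda$ for $\lambda>0$, on which your compactness--contradiction step entirely rests, is asserted to ``persist for small $\lambda$ by perturbation.'' Injectivity is \emph{not} stable under small perturbations for operators of this type: $T^+_\lambda$ is smoothing of order $1/2$, hence compact as a map $L^2 \to L^2$, and any injective compact operator can be made non-injective by an arbitrarily small perturbation (subtract the rank-one operator $f \mapsto \langle f, v\rangle\, T^+_0 v$ with $\|T^+_0 v\|$ small, which exists since the singular values of a compact operator tend to zero). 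What \emph{does} persist under perturbation is the quantitative estimate: if $\|f\|_{H^{-1/2}} \leq C\|X_0 f\|_{L^2(E)}$ and $\|X_{w_\lambda}-X_0\|_{H^{-1/2}\to L^2}$ is small, then the same estimate holds for $X_{w_\lambda}$ with constant $2C$. Proving that smallness of the operator norm of the difference $X_{w_\lambda-1}$ follows from smallness of the weight $w_\lambda - 1$ is exactly the nontrivial pseudodifferential content (the paper's Lemma \ref{Xray:Pseudo} and Corollary \ref{Xray:XwSobBound}), and once you have it you obtain the theorem directly from the exact identity $X_0^*X_0 = c_d|D_z|^{-1}$ --- no G\aa{}rding inequality, no compactness argument, and no separate injectivity statement are needed. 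In other words, the only available route to fill your gap renders the entire second half of your proof redundant; this streamlined perturbation argument is precisely the paper's proof.

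Two further points, subordinate to the main one. First, even granting injectivity for each fixed $\lambda$, your compactness--contradiction argument produces a constant $C(\lambda)$ with no control as $\lambda$ varies, whereas the theorem requires $C$ and $\lambda_0$ uniform over $0\leq\lambda\leq\lambda_0$; repairing this needs a joint compactness argument in $(\lambda,f)$, which again requires injectivity for \emph{every} $\lambda$ in the interval. Second, your G\aa{}rding step needs more care than stated: after cutting off the weight, $N_\lambda$ is elliptic only in a neighbourhood of the cap (its symbol vanishes where the cutoff kills the weight), so one must either work with a microlocal parametrix giving $\|f\|_{H^{-1/2}} \leq C\|N_\lambda f\|_{H^{1/2}} + C\|f\|_{H^{-N}}$ for $f$ supported in the cap, or insert cutoffs into the G\aa{}rding argument; moreover the limit produced by Rellich lives only in $H^{-1}$, so injectivity must be known on distributions (fixable by elliptic regularity for $N_\lambda$, but it should be said). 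These are repairable; the injectivity claim is the step that does not close as written.
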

\begin{rem}
     If such a stability estimate holds for large values of the attenuation parameter $\lambda>0$ then the constant $C$ will behave exponentially badly, since if one takes $f$ to be a smooth function supported in $x_{d+1}>1/2$, normalized in such a way that the $L^2$ norm is one, then by Santalo's formula
     $$ \int_{\d_+S(S^d_+)} |T_{\lambda}^+f(x',\xi)|^2 \, \dd \mu  \leq \pi|S^{d-1}| \, \e^{-\lambda \pi/4} \ $$
     and therefore
     $$ C \geq \frac{\e^{\lambda \pi/4} }{\pi |S^{d-1}| } \, \|f\|_{H^{-1/2}(S^d_+)} . $$
     In particular, from the point of view of stability estimates for the Calder\'on problem with partial data, this means that, were we to prove a stability estimate for the attenuated geodesic transform for a wider range of values of the attenuation parameter $\lambda>0$, we would still be obtaining $\log$-$\log$ type stability estimates for the partial Dirichlet-to-Neumann map (one logartihmic loss for each of the  exponential factors arising from the exponential behaviour of the complex geometrical optics and from the exponential behaviour of the constant in the stability estimate for the attenuated ray transform). 
     For efficiency purposes it is therefore not worth pursuing the investigation of estimates for larger values of the attenuation parameter, and for our goal, we are satisfied with small values of $\lambda$. 
     Note that in \cite{DSFKSU}, the injectivity of the attenuated geodesic ray transform was proved for small values of the attenuation in simple compact Riemannian manifolds with boundary, and that in \cite{SaU}, the injectivity was obtained for all attenuations on simple surfaces.
\end{rem}

\subsection{Stereographic projection}
\label{Par:Stereo}

To prove the estimate, we will study the corresponding Euclidean weighted X-ray transform obtained by mapping the northern hemisphere to the Euclidean space through stereographic projection
\begin{align*}
     \sigma_+ : S^d_+ \to \R^{d}
\end{align*}
the expression of which is 
\begin{align*}
     \sigma_+(x) = \frac{x'}{x_{d+1}}, \quad \sigma_+^{-1}(z)=\frac{(z,1)}{\langle z \rangle}.
\end{align*}
Half great circles are transformed into lines by the stereographic projection
    $$ \sigma_+ \circ \gamma_{x',\xi}(t) =\mathop{\rm cotan} t \, \frac{x'}{\xi_{d+1}} + \sigma_+(\xi). $$ 
In the coordinates given by the projection, the ray transform reads 
    $$ T_{\lambda}^+f(x',\xi) = T^{\infty}_{\lambda}(f\circ\sigma_+^{-1})(\sigma_+(\xi),x'/\xi_{d+1}) $$
where $T^{\infty}_{\lambda}$ is the following weighted (Euclidean) X-ray transform
\begin{align*}
    T^{\infty}_{\lambda}g(z,\zeta) = \int_{-\infty}^{\infty} g(z+t\zeta) \, \e^{-\lambda \big(\frac{\pi}{2}-\arctan t \big)}\, \frac{\dd t}{1+t^2}, \\ z \in \R^d, \zeta \in z^{\perp}, |\zeta|= \langle z \rangle.
\end{align*}        
Note that if $f$ is supported in $S^d_{>\alpha_0}$ then its pullback by the inverse of the stereographic projection $g=f \circ \sigma^{-1}_+$ is supported in the ball $B(0,(\alpha_0^{-2}-1)^{1/2})$ where $\langle z \rangle \leq \alpha_0^{-1}$.
\begin{rem}
     The stereographic projection is an isometry between the hemisphere $(S^{d}_+,g_{d})$ endowed with the canonical metric and $(\R^d,G_d)$ where the metric $G_d$ is given (in matrix form) by
      $$ G_d = \frac{1}{1+|z|^2}\bigg({\rm I}_d- \frac{ z \otimes z}{1+|z|^2}\bigg). $$
In particular, we have
      $$ \sqrt{\det G_d} = \langle z \rangle^{-d-1} $$
and the corresponding $L^2$ space obtained by projecting square integrable functions on the hemisphere by stereographic projection is     
     $$ L^2(\R^d,\langle z \rangle^{-d-1}\dd z). $$
In terms of norms, this gives 
     $$ \|f\|^2_{L^2(S^d_+)} = \int_{\R^d} |f \circ \sigma_+^{-1}(z)|^2 \langle z \rangle^{-d-1} \, \dd z. $$
\end{rem}
Our aim is to control the $H^{-1/2}$ norm of $f$ by
\begin{align*}
     \|T_{\lambda}^+f\|_{L^2(\d_+S(S^d_+))}^2 &= \int_{\d_+S(S^d_+)} |T_{\lambda}^+f(x',\xi) |^2 \, \dd \mu(x',\xi)
    \\ &= \int_{S^{d-1}} \bigg(\int_{\zeta^{\perp}} \big|T^{\infty}_{\lambda}(f\circ \sigma_+^{-1})(z,\langle z \rangle \zeta)\big|^2 \langle z \rangle^{-1-d} \, \dd z\bigg) \, \dd \zeta
\end{align*}
(where $\mu$ is the measure on $\d_+S(S^d_+)$ defined in the beginning of the section).
The weighted ray transform under scope also reads    
 \begin{align*}
    T^{\infty}_{\lambda}g(z,\langle z \rangle\zeta) &= \langle z \rangle  \int_{-\infty}^{\infty} g(z+t\zeta) \, \e^{-\lambda \big(\frac{\pi}{2}-\arctan \frac{t}{\langle z \rangle}\big)}\, \frac{\dd t}{t^2+\langle z \rangle^2} \\ &= \langle z \rangle X_{w_{\lambda}} \big(\langle z \rangle^{-2} g\big)(z,\zeta), \qquad z \in \R^d, \zeta \in z^{\perp}, |\zeta| = 1,
\end{align*}        
where  $X_w$ denotes the Euclidean weighted X-ray transform   
    $$ X_{w}f(z,\zeta) = \int_{-\infty}^{\infty} f(z+t\zeta) \, w(t,z) \, \dd t, \qquad z \in \R^d, \zeta \in S^{d-1} \cap z^{\perp} $$
with (bounded) exponential weight
\begin{align*}
     w_{\lambda}(t,z) = \e^{-\lambda\big(\frac{\pi}{2}-\arctan \frac{t}{\langle z \rangle}\big)}.
\end{align*}    
If the function $g$ is supported in the set $\langle z \rangle \leq \alpha_0^{-1}$ then so is the weighted ray transform $X_wg$ and thus we have 
\begin{multline*}
     \bigg(\int_{S^{d-1}} \int_{\zeta^{\perp}} \big| \langle z \rangle^{-\frac{d-1}{2}} X_{w_{\lambda}}(\langle z \rangle^2 g)\big|^2 \, \dd z \, \dd \zeta\bigg) \\ \geq \alpha_0^{\frac{d-1}{2}} \bigg(\int_{S^{d-1}} \int_{\zeta^{\perp}} \big|X_{w_{\lambda}}(\langle z \rangle^2 g)\big|^2 \, \dd z \, \dd \zeta\bigg). 
\end{multline*}
Similarly the fact that $\langle z \rangle^{-(d+1)/2} \langle D_z \rangle^{-1/2} \langle z \rangle^{-2}$ is a pseudodifferential operator of order $-1/2$ implies 
\begin{align*}
     \int \big|\langle D_z \rangle^{-\frac{1}{2}} g\big|^2 \langle z \rangle^{-d-1} \, \dd z \leq 
     C' \|\langle z \rangle^2 g\|_{H^{-\frac{1}{2}}(\R^d)} 
\end{align*}
and therefore the stability estimate we propose to prove is equivalent to 
\begin{align}
\label{Xray:KeyWEstimate}
     C'' \bigg(\int_{S^{d-1}} \int_{\zeta^{\perp}} |X_{w_{\lambda}} g(z,\zeta)|^2 \, \dd z \, \dd \zeta\bigg)^{\frac{1}{2}} \geq \|g\|_{H^{-\frac{1}{2}}(\R^d)}
\end{align}
for functions $g \in L^2(\R^d)$ supported in $B(0,(\alpha_0^{-2}-1)^{1/2})$. 

\subsection{The Euclidean weighted X-ray transform}

Motivated by the former computations related to the stereographic projection, we study the Euclidean weighted X-ray transform   
    $$ X_{w}f(z,\zeta) = \int_{-\infty}^{\infty} f(z+t\zeta) \, w(t,z) \, \dd t, \qquad z \in \R^d, \zeta \in S^{d-1} \cap z^{\perp} $$
where $w$ is a smooth function of $(t,z) \in \R^{d+1}$. For further use, we denote $E$ the sphere bundle%
\footnote{From this point of view, we should write $(\zeta,z) \in E$ for the variables but we prefer to abuse notations and keep the former notations. }
    $$ E = \bigcup_{\zeta \in S^{d-1}} \zeta^{\perp}. $$  
\begin{rem}
\label{Xray:XwL2Bound}
     Note that $X_w$ is a bounded operator from $L^2(\R^d)$ to $L^2(E)$ provided $w$ has a good behaviour (for instance if it is a smooth compactly supported function).
     Indeed, by Cauchy-Schwarz in the time integral, we get
     \begin{align*}
           \|X_w f\|_{L^2(E)} &\leq \bigg(\int_{S^{d-1}}  \bigg(\int_{\zeta^{\perp}} \int_{-\infty}^{\infty} \|w(\cdot,z)\|^2_{L^2(\R)} |f(z+t\zeta)|^2 \, \dd t \, \dd z\bigg) \, \dd \zeta\bigg)^{\frac{1}{2}} \\
           &\leq \sup_{z \in \R^d} \|w(\cdot,z)\|_{L^2(\R)} |S^{d-1}|^{\frac{1}{2}}  \|f\|_{L^2(\R^d)}
     \end{align*}
\end{rem}
\begin{thm}
\label{Xray:XwStabEst}
       Let $\alpha_0 \in (0,1)$, there exist two constants $\eps >0$ and $C>0$ and an integer $N$ such that for all weights $w \in \mathcal{C}^{\infty}(\R^{d+1})$ such that
             $$ \sup_{t^2+\langle z \rangle^2 \leq 4\alpha_0^{-2}} |\d^{\beta}(w(t,z)-1)| \leq \eps, \quad |\beta| \leq N$$ 
       and all $f \in L^2(\R^d)$ supported in $B(0,(\alpha_0^{-2}-1)^{1/2})$ one has the following estimate
     \begin{align*}
          C\|X_w f\|_{L^2(E)} \geq \|f\|_{H^{-\frac{1}{2}}(\R^d)}.
     \end{align*}
\end{thm}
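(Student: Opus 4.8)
The plan is to reduce everything to the \emph{normal operator} $N_w = X_w^{*}X_w$, solve the unweighted case $w\equiv 1$ exactly, and then absorb the weight as a small perturbation. Since $\|X_wf\|_{L^2(E)}^2 = \langle N_wf,f\rangle_{L^2(\R^d)}$, it suffices to bound $\langle N_wf,f\rangle$ from below by a multiple of $\|f\|_{H^{-1/2}}^2$. I would first record the Schwartz kernel of $N_w$. Computing the adjoint gives $X_w^{*}h(y) = \int_{S^{d-1}} w(\langle y,\zeta\rangle, z)\,h(z,\zeta)\,\dd\zeta$ with $z = y-\langle y,\zeta\rangle\zeta$ the projection of $y$ onto $\zeta^{\perp}$; composing with $X_w$ and performing the Santal\'o change of variables $y' = z+s\zeta$ (for which $\zeta = \pm(y'-y)/|y'-y|$ and the Jacobian produces the factor $|y-y'|^{-(d-1)}$) turns $N_w$ into
\[
N_wf(y) = \int_{\R^d} K_w(y,y')\,f(y')\,\dd y', \qquad K_w(y,y') = \frac{c_d\,W_w(y,y')}{|y-y'|^{d-1}},
\]
where $W_w(y,y') = w(\langle y,\zeta\rangle,z)\,w(\langle y',\zeta\rangle,z)$ with $\zeta = (y'-y)/|y'-y|$, $z = y-\langle y,\zeta\rangle\zeta$, and $W_1\equiv 1$ (the two choices $\pm\zeta$ being absorbed in $c_d$).

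For the unweighted case, $K_1(y,y') = c_d\,|y-y'|^{-(d-1)}$ is exactly the kernel of a constant multiple of the Riesz potential $|D|^{-1}$, so $\langle N_1f,f\rangle = c_d'\int_{\R^d} |\xi|^{-1}\,|\widehat f(\xi)|^2\,\dd\xi$. Since $|\xi|^{-1}\geq\langle\xi\rangle^{-1}$ and the singularity at the origin is harmless ($\widehat f$ being continuous for compactly supported $f\in L^2$), this yields $\langle N_1f,f\rangle \geq c_d'\,\|f\|_{H^{-1/2}}^2$ for \emph{every} compactly supported $f\in L^2$, with no lower order remainder. This is the same computation as the Fourier slice argument already used to set up \eqref{Xray:KeyWEstimate}: Plancherel in the $z$ variable gives $\|X_1f\|_{L^2(E)}^2 = c\int_0^{\infty}\!\int_{S^{d-1}} |\widehat f(r\omega)|^2\,r^{d-2}\,\dd\omega\,\dd r$, and $r^{d-2}\geq\langle r\rangle^{-1}r^{d-1}$ closes the estimate.

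The perturbation step is the heart of the matter. Write $w = 1+v$ with $v$ small in $C^N$ on the region $\{t^2+\langle z\rangle^2\leq 4\alpha_0^{-2}\}$. The key bookkeeping observation is that if $y,y'\in B(0,R_0)$ with $R_0 = (\alpha_0^{-2}-1)^{1/2}$, then all arguments of $w$ appearing in $K_w(y,y')$ lie in the good region: $|z|\leq|y|\leq R_0$ forces $\langle z\rangle^2\leq\alpha_0^{-2}$, while $|\langle y,\zeta\rangle|,|\langle y',\zeta\rangle|\leq R_0$, so $t^2+\langle z\rangle^2\leq 2\alpha_0^{-2}-1\leq 4\alpha_0^{-2}$. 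Hence, on the quadratic form restricted to $f$ supported in $B(0,R_0)$, I may replace $w$ by $1+\chi v$ for a cutoff $\chi$ to the good region, i.e. assume $v$ is globally smooth, compactly supported, and small in $C^N$. Then $N_w-N_1$ has kernel $c_d\,|y-y'|^{-(d-1)}\big(W_w(y,y')-1\big)$ with $W_w-1$ a sum of terms linear and quadratic in $v$, hence smooth and $O(\eps)$. This is a variable-coefficient Riesz-type kernel, and it defines a pseudodifferential operator of order $-1$ whose symbol, together with finitely many of its seminorms, is $O(\eps)$ --- this is exactly where the integer $N$ is consumed. By the standard $L^2$-boundedness of pseudodifferential operators (Calder\'on--Vaillancourt), $N_w-N_1$ is bounded $H^{-1/2}\to H^{1/2}$ with norm $\leq C\eps$, so that $|\langle (N_w-N_1)f,f\rangle|\leq C\eps\,\|f\|_{H^{-1/2}}^2$.

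Combining the two estimates gives $\langle N_wf,f\rangle \geq (c_d'-C\eps)\,\|f\|_{H^{-1/2}}^2$; choosing $\eps$ so small that $C\eps\leq c_d'/2$ yields the theorem with $C = (2/c_d')^{1/2}$. \textbf{The main obstacle} is precisely the symbol analysis of $N_w-N_1$: one must verify that a variable-coefficient Riesz kernel with small, smooth numerator is a pseudodifferential operator of order $-1$ whose operator norm is $O(\eps)$, quantitatively controlled by finitely many derivatives of $v$. This is the one genuinely microlocal ingredient (the Stefanov--Uhlmann normal-operator analysis); by contrast the unweighted estimate is an exact Fourier/Riesz computation and the support bookkeeping is routine.
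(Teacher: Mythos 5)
Your setup (reduction to the normal operator, the exact unweighted computation $X_0^*X_0=c_d|D_z|^{-1}$ giving $\|X_0f\|_{L^2(E)}^2\geq c\|f\|_{H^{-1/2}}^2$ for compactly supported $f$, and the support bookkeeping allowing you to cut off the weight) is correct and coincides with the paper's. The genuine gap is in the perturbation step. Writing $w=1+v$ with $v$ compactly supported and small, linearity of the transform in the weight gives
\begin{align*}
N_w-N_1 \;=\; X_0^*X_v+X_v^*X_0+X_v^*X_v ,
\end{align*}
and the cross terms contain only \emph{one} factor of $v$. In kernel language: $W_w(y,y')-1$ contains the term $v(\langle y',\zeta\rangle,z)$ alone, which confines $y'$ to a ball but leaves $y$ free, so the kernel of $N_w-N_1$ is compactly supported in one variable only and decays merely like $|y-y'|^{-(d-1)}$ in the other. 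A symbol in $S^{-1}$ (uniform estimates down to $\zeta=0$) always produces a kernel with rapid off-diagonal decay, by integration by parts in $\zeta$; hence this operator is \emph{not} a pseudodifferential operator of order $-1$ --- its symbol inherits the $|\zeta|^{-1}$ singularity of the Fourier transform of $|u|^{-(d-1)}$ at $\zeta=0$ --- and Calder\'on--Vaillancourt cannot be invoked. Worse, the conclusion you draw from it is false in the flagship case $d=2$ (i.e.\ $n=3$): taking $f,v\geq 0$ to be smooth bumps near the origin, one checks that $(X_0^*X_vf)(y)\sim c\,|y|^{-1}$ as $|y|\to\infty$ (only directions $\zeta$ within angle $O(1/|y|)$ of $\pm y/|y|$ contribute), while the remaining two terms are compactly supported; so $(N_w-N_1)f\notin L^2(\R^2)$, in particular $N_w-N_1$ does not map $H^{-1/2}$ into $H^{1/2}$ and no bound $\|N_w-N_1\|_{H^{-1/2}\to H^{1/2}}\leq C\eps$ can hold. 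What you actually need, and what is true, is only the quadratic-form bound $|\langle (N_w-N_1)f,f\rangle|\leq C\eps\|f\|_{H^{-1/2}}^2$ for $f$ supported in the fixed ball; this exploits the compact support of $f$ on \emph{both} sides of the pairing and cannot be routed through an operator norm of $N_w-N_1$.

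The repair is to arrange the perturbation so that every normal operator you analyze carries \emph{two} factors of the small compactly supported weight --- which is exactly how the paper proceeds. The paper perturbs the transform, not the normal operator: from $X_wf=X_0f+X_vf$ (with $v=\chi(w-1)$) it writes $\|f\|_{H^{-1/2}}\leq C_1\|X_0f\|_{L^2(E)}\leq C_1\|X_wf\|_{L^2(E)}+C_1\|X_vf\|_{L^2(E)}$, and the only microlocal input is the smallness $\|X_vf\|_{L^2(E)}\leq C\eps\|f\|_{H^{-1/2}}$ of Corollary \ref{Xray:XwSobBound}. That bound comes from $N_v=X_v^*X_v$, whose kernel $|y-y'|^{-(d-1)}L_v(y,y')L_v(y',y)$ is compactly supported in \emph{both} variables since both factors involve $v$; and even for this kernel the paper does not claim a global symbol of order $-1$, but splits frequencies, treating only $N_v(1-\psi(D_z))$ as a pseudodifferential operator of order $-1$ (Lemma \ref{Xray:Pseudo}) and handling low frequencies by the elementary $L^2$ bound of Remark \ref{Xray:XwL2Bound} together with $\|\psi(D_z)f\|_{L^2}\leq C\|f\|_{H^{-1/2}}$. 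If you prefer to keep your quadratic-form formulation, the correct version of your third paragraph is
\begin{align*}
\langle (N_w-N_1)f,f\rangle \;=\; 2\re \langle X_vf, X_0f\rangle_{L^2(E)}+\|X_vf\|_{L^2(E)}^2 ,
\end{align*}
followed by Cauchy--Schwarz, the bound $\|X_0f\|_{L^2(E)}\leq C\|f\|_{H^{-1/2}}$ (equivalence of homogeneous and inhomogeneous norms on compactly supported functions), and $\|X_vf\|_{L^2(E)}\leq C\eps\|f\|_{H^{-1/2}}$; that last estimate is precisely the paper's Corollary \ref{Xray:XwSobBound}, so your argument, once repaired, reduces to the paper's proof rather than to a Calder\'on--Vaillancourt bound on $N_w-N_1$.
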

First note that since one is interested in lower bounds on $X_wf$ and since norms of
\begin{align*}
     X_wf(z,\zeta)+X_wf(z,-\zeta) = Xf_{\tilde{w}}(z,\zeta), \quad \tilde{w}(t,\zeta)=w(t,\zeta)+w(-t,\zeta)
\end{align*}
are controlled by twice the norms of $X_wf$, we may assume without loss of generality that $w$ is even in time. Besides if $\chi \in \D(0,2\alpha_0^{-1})$ then
   $$ X_w\big(\chi(\langle z \rangle) f\big) = X_{\chi(\langle t,z \rangle) w} f $$
therefore if $f$ is supported in $\langle z \rangle \leq \alpha_0^{-1}$, by choosing $\chi \in \D(0,2\alpha_0^{-1})$ equal $1$ on $[0,\alpha_0^{-1}]$, we may assume that $w$ is a smooth compactly supported function and that
\begin{align}
\label{Xray:SmallControlw}
  \sup_{\R^{d+1}} |\d^{\beta}(w(t,z)-1)| \leq C \eps, \quad |\beta| \leq N. 
\end{align}
  
It is well known that the normal operator built from the Euclidean X-ray transform is a multiple of the square root of the inverse of the standard Euclidean Laplacian
     $$ X_0^*X_0 =  c_d |D_z|^{-1},\quad c_d = 4 (2\pi)^{\frac{d-1}{2}} \Gamma\bigg(\frac{d-1}{2}\bigg) $$
where the adjoint $X_0^*$ of the X-ray transform is given by
     $$ X_0^*F(z) = \int_{S^{d-1}} F(z-\langle z,\zeta\rangle \zeta,\zeta) \, \dd \zeta.  $$ 
This implies that $X_0$ is a bounded isomorphism $\dot{H}^{-1/2}(\R^d) \to L^2(E)$ with inverse
    $$ X_0^{-1} = c_d^{-1} |D_z| X_0^* $$
(where $\dot{H}^{-1/2}$ denotes the homogeneous Sobolev space of order $-1/2$). 
Let us now perform the computation in the weighted case.  The adjoint operator reads
\begin{align*}
     (X_w^*F)(z) = \int_{S^{d-1}} w(\langle z,\zeta\rangle,\pi_{\zeta^{\perp}}(z)) F(\pi_{\zeta^{\perp}}(z),\zeta)  \, \dd \zeta
\end{align*}
where $\pi_{\zeta^{\perp}}(z)=z-\langle z,\zeta\rangle \zeta$ is the orthogonal projection of $z$ on the orthogonal of $\zeta$. The normal operator $N_w=X_w^*X_w$ is therefore given by
\begin{align*}
     N_wf(z) &= 2 \int_{S^{d-1}} \int_{0}^{\infty} w(\langle z,\zeta\rangle,\pi_{\zeta^{\perp}}(z)) \, w(t+\langle z,\zeta \rangle,\pi_{\zeta^{\perp}}(z)) f(z+t\zeta ) \, \dd t \, \dd \zeta \\ &= \int N_w(z,y) f(y) \, \dd y 
\end{align*}     
where by a slight abuse of notations, we have kept $N_w$ to denote the kernel of the normal operator 
\begin{align*}
     N_w(z,y) &= |z-y|^{-d+1}  L_w(z,y)L_w(y,z) \\ L_w(z,y) &= w \bigg(\frac{\langle z,y-z \rangle}{|y-z|},z-\frac{\langle z,y-z\rangle}{|y-z|^2}(y-z)\bigg).
\end{align*}
Note that this kernel is singular on the diagonal $z=y$ on $\R^d_z \times \R^d_y$ and that for $w=0$, we get that $N_0=X_0^*X_0=c_d |D_z|^{-1}$ as explained earlier on.
\begin{lem}
\label{Xray:Pseudo}
      Let $w \in \D(\R^{n+1})$,  let $\psi \in \D(\R^d)$ equal one on the unit ball, the  component of the normal operator acting on high frequencies
         $$ N^{\rm high}_w = N_w(1-\psi(D_z)) $$ 
      is a pseudodifferential operator on $\R^d$ of order $-1$, the symbol of which has seminorms in $S^{-1}$ bounded by finitely many seminorms of $w$ in $\D(\R^{n+1})$ of the form
          $$ \sup_{\R^{d+1}} |\d^{\beta}w|. $$  
\end{lem}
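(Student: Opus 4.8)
The plan is to exhibit the Schwartz kernel of $N_w$ as a distribution conormal to the diagonal $\{z=y\}$, with leading singularity $|z-y|^{-d+1}$, and then to read off its symbol by Fourier transforming in the normal variable; the high-frequency cutoff $1-\psi(D_z)$ will serve only to cure the singularity at $\xi=0$ inherited from the homogeneity of that symbol. First I would pass to the variables $(z,h)$, $h=z-y$, and make the amplitude explicit. Writing $\omega=h/|h|$ and $r=|h|$, the formulae for $N_w$ and $L_w$ give
\[
   N_w(z,z-h)=|h|^{-d+1}\,a(z,h),\qquad
   a(z,h)=w\bigl(-\langle z,\omega\rangle,\pi_{\omega^\perp}(z)\bigr)\,w\bigl(\langle z,\omega\rangle-r,\pi_{\omega^\perp}(z)\bigr).
\]
Since $w\in\D(\R^{d+1})$ is smooth and compactly supported, $a$ is smooth on $\R^d\times(\R^d\setminus\{0\})$; the compact support of $w$ confines $(z,h)$ to a fixed compact set (the first factor bounds $z$, the second then bounds $r=|h|$), and for each fixed $\omega$ the map $r\mapsto a(z,r\omega)$ extends smoothly to $r=0$. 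By the chain rule the derivatives of $a$ in $z$ and in the spherical variables $(\omega,r)$ are bounded by finitely many of the seminorms $\sup_{\R^{d+1}}|\d^\beta w|$; in particular the Taylor coefficients $a_k(z,\omega)=\tfrac{1}{k!}\,\d_r^k a(z,r\omega)\big|_{r=0}$ are smooth on $\R^d\times S^{d-1}$ with such bounds.

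Next I would localise near the diagonal. Choosing $\phi\in\D(\R^d)$ equal to one near $0$, I split $a=\phi a+(1-\phi)a$. The contribution of $(1-\phi)a$ has kernel $|h|^{-d+1}(1-\phi(h))a(z,h)$, which is smooth and compactly supported in $(z,h)$ and thus defines a smoothing operator, its symbol lying in $S^{-\infty}$ with seminorms controlled by those of $a$. The essential part is therefore $|h|^{-d+1}\phi(h)a(z,h)$, a compactly supported distribution in $h$, smooth away from $h=0$, whose only singularity is the conormal one at the origin.

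I would then compute its left symbol
\[
   p(z,\xi)=\int_{\R^d}\e^{-ih\cdot\xi}\,|h|^{-d+1}\phi(h)\,a(z,h)\,\dd h
\]
through the finite radial Taylor expansion $a(z,h)=\sum_{k<K}|h|^k a_k(z,\omega)+|h|^K R_K(z,h)$, with $R_K$ smooth and bounded. Each homogeneous summand $|h|^{-d+1+k}a_k(z,\omega)$ is homogeneous of degree $-(d-1-k)$ and smooth off the origin, so its Fourier transform in $h$ is homogeneous of degree $-1-k$ in $\xi$ and smooth in $(z,\xi/|\xi|)$; this is the classical computation of the Fourier transform of a homogeneous distribution, and it is precisely here that the leading singularity $|h|^{-d+1}$ produces the order $-1$ behaviour $\sim|\xi|^{-1}$. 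The remainder contributes a term of order $-1-K$ of arbitrarily high finite smoothness, while the cutoffs $\phi$ and $1-\phi$ add only further $S^{-\infty}$ corrections. Collecting the pieces shows that, away from $\xi=0$, the symbol $p$ belongs to $S^{-1}$, with each $S^{-1}$ seminorm bounded by finitely many of the $\mathcal{C}^m(S^{d-1})$ norms of the $a_k$, hence by finitely many seminorms $\sup_{\R^{d+1}}|\d^\beta w|$.

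Finally, because the homogeneous components are singular at $\xi=0$, the symbol $p$ defines a bona fide pseudodifferential operator only after low frequencies are discarded: composing with $1-\psi(D_z)$ replaces $p(z,\xi)$ by $p(z,\xi)\bigl(1-\psi(\xi)\bigr)$, which is smooth on all of $\R^d_z\times\R^d_\xi$, coincides with the homogeneous expansion for $|\xi|$ large, and therefore lies in $S^{-1}$ with the stated control of seminorms. This yields that $N^{\rm high}_w=N_w(1-\psi(D_z))$ is a pseudodifferential operator of order $-1$. The main obstacle is the careful handling of the conormal singularity at the diagonal together with the angular dependence of the amplitude: one must justify the Fourier transforms of the homogeneous distributions $|h|^{-d+1+k}a_k(z,\omega)$ for both parities in $\omega$, bookkeep the cutoff corrections, and—most importantly for the perturbative estimate in Theorem~\ref{Xray:XwStabEst}—verify that the resulting $S^{-1}$ seminorms are dominated, uniformly in $w$, by finitely many of the plain derivative bounds $\sup_{\R^{d+1}}|\d^\beta w|$.
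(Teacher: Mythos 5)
Your proposal is correct, but it takes a genuinely different route from the paper. You treat the Schwartz kernel as a distribution conormal to the diagonal: localize near $h=0$, Taylor-expand the amplitude radially, and Fourier-transform each homogeneous piece $|h|^{-d+1+k}a_k(z,\omega)$, obtaining a polyhomogeneous symbol whose leading term is homogeneous of degree $-1$; the cutoff $1-\psi(D_z)$ then removes the singularity at $\xi=0$. The paper instead never invokes the theory of homogeneous distributions: it inserts a dyadic partition of unity $1=\sum_\mu \chi(2^{-\mu}y)$ into the oscillatory integral defining the symbol, applies non-stationary phase (integration by parts) on each dyadic piece where $2^\mu|\zeta|\geq 1$, and sums the two regimes to get $|a_w(z,\zeta)|\leq C(w)\langle\zeta\rangle^{-1}$, repeating the argument for derivatives. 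Your approach buys more --- a classical (polyhomogeneous) symbol and, in principle, the explicit principal symbol $c_d|\xi|^{-1}$ times an angular average of the weight --- at the price of the machinery of Fourier transforms of homogeneous distributions; to complete it you must check that for degree $-d+1+k>-d$ the locally integrable function is the \emph{unique} homogeneous extension (so no delta or logarithmic corrections arise), and that the corrections from the cutoffs $\phi$, $1-\phi$ are only conormal at $\xi=0$, hence harmless precisely because of the factor $1-\psi(\xi)$. The paper's argument is more elementary and self-contained, and it delivers the point that actually matters for the perturbation step in Theorem \ref{Xray:XwStabEst} --- that each $S^{-1}$ seminorm is bounded by finitely many sup-norms $\sup|\d^\beta w|$ --- directly from the integration-by-parts bounds, whereas in your scheme this uniformity must be traced through the homogeneous-distribution calculus (linearity of the Fourier transform in $a_k$ plus bounds by $\mathcal{C}^m(S^{d-1})$ norms), as you correctly flag at the end.
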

\begin{proof}
     The symbol associated with the normal operator is
     \begin{align*}
           a_{w}(z,\zeta) &= (1-\psi(\zeta)) \int N_w (z,z-y)\e^{- i y \cdot \zeta} \, \dd y.
     \end{align*}
      We will use a dyadic partition of unity
           $$ 1 = \sum_{\mu=-\infty}^{\infty} \chi(2^{-\mu}y), $$
with $\chi$ a function supported in the annulus $\frac{1}{2} \leq |y| \leq 2$, to decompose the symbol as a sum of terms of the form
\begin{multline*}
      2^{\mu d} (1-\psi(\zeta))\int \e^{i 2^{\mu} y\cdot \zeta}  \chi(y) N_{w}(z,z-2^{\mu}y) \, \dd y \\ 
      = 2^{\mu}  (1-\psi(\zeta)) \int \e^{i 2^{\mu} y\cdot \zeta}  |y|^{-d+1}\chi(y) L_{w}(z,z-y)L_w(z-2^{\mu}y,z)  \, \dd y 
\end{multline*}
with
\begin{align*}
       L_{w}(z,z-y) &= w\bigg(-\frac{\langle z,y \rangle}{|y|},\pi_{y^{\perp}}(z)\bigg), \\
       L_w(z-2^{\mu}y,z) &=w\bigg(-\frac{\langle z,y \rangle}{|y|}-2^{\mu}|y|,\pi_{y^{\perp}}(z)\bigg).
\end{align*}
Note that because of the compact support of the weight $w$, the first function $L_w(z,z-y)$ has compact support in $z$. Derivatives with respect to $y$ of the amplitude
     $$ |y|^{-d+1}\chi(y) L_{w}(z,z-y)L_w(z-2^{\mu}y,z) $$
are bounded by finitely many seminorms of $w$ in $\D(\R^{n+1})$. Applying the non-stationary phase when $2^{\mu}|\zeta|$ is larger than $1$ we get 
     \begin{align*}
          |a_w(z,\zeta)| &\leq C(w)  \sum_{2^{\mu}|\zeta| \geq 1} 2^{\mu} (2^{\mu}|\zeta|)^{-M}
          + C(w)  \sum_{\, 2^{\mu}|\zeta| \leq 1}  2^{\mu} \\ &\leq C(w)\sqrt{2} \bigg(\frac{2^M}{M-1}+2\bigg)\langle \zeta \rangle^{-1}
     \end{align*}
with a constant $C(w)$ depending on finitely many seminorms of $w$.
Repeating this argument for the derivatives of this function, we get that $a_{w}$ is a classical symbol of order $-1$.
\end{proof}    
\begin{clry}
\label{Xray:XwSobBound}
     The weighted X-ray transform $X_w$ is a bounded operator from $H^{-\frac{1}{2}}(\R^d)$ to $L^2(E)$. Its norm is bounded by finitely many seminorms of $w$ in $\D(\R^{n+1})$.
\end{clry}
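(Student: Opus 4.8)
The plan is to bound $X_w$ on $H^{-1/2}$ by exploiting the normal operator $N_w = X_w^*X_w$, for which we already possess two complementary pieces of information: Remark~\ref{Xray:XwL2Bound} gives the $L^2(\R^d) \to L^2(E)$ boundedness of $X_w$, while Lemma~\ref{Xray:Pseudo} tells us that the high-frequency component $N^{\mathrm{high}}_w = N_w(1-\psi(D_z))$ is a pseudodifferential operator of order $-1$ whose symbol seminorms are controlled by finitely many seminorms of $w$. Since both ingredients carry their dependence on $w$ through such seminorms, the final constant will automatically inherit this dependence, which is exactly what the statement demands. I would first establish the a priori estimate $\|X_w f\|_{L^2(E)} \leq C\|f\|_{H^{-1/2}}$ for $f \in \mathcal{S}(\R^d)$ and then extend $X_w$ to $H^{-1/2}(\R^d)$ by density; for Schwartz $f$ the frequency-localized pieces $\psi(D_z)f$ and $(1-\psi(D_z))f$ are again Schwartz, hence lie in $L^2$, so the identity $\|X_w f\|_{L^2(E)}^2 = \langle N_w f, f\rangle$ and the subsequent manipulations are all legitimate.

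The splitting is the heart of the argument. For $f \in \mathcal{S}(\R^d)$ write $f = \psi(D_z)f + (1-\psi(D_z))f$ and estimate $\|X_w f\|_{L^2(E)}$ by the triangle inequality. For the low-frequency part, Remark~\ref{Xray:XwL2Bound} gives $\|X_w \psi(D_z)f\|_{L^2(E)} \leq C\,\|\psi(D_z)f\|_{L^2(\R^d)}$; since $\psi$ is compactly supported, the multiplier $\psi(D_z)$ is bounded from $H^{-1/2}$ to $L^2$, because $\langle \zeta \rangle^{2s}|\psi(\zeta)|^2$ is bounded on the compact support of $\psi$ for every $s$, so that $\|\psi(D_z)f\|_{L^2} \leq C\|f\|_{H^{-1/2}}$. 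This disposes of the low frequencies. For the high-frequency part, rewrite $\|X_w (1-\psi(D_z))f\|_{L^2(E)}^2 = \langle N_w(1-\psi(D_z))f, (1-\psi(D_z))f\rangle = \langle N^{\mathrm{high}}_w f, (1-\psi(D_z))f\rangle$. By Lemma~\ref{Xray:Pseudo}, $N^{\mathrm{high}}_w$ has order $-1$, hence maps $H^{-1/2}(\R^d) \to H^{1/2}(\R^d)$ boundedly, while $1-\psi(D_z)$ is bounded on $H^{-1/2}$; pairing through the duality between $H^{1/2}$ and $H^{-1/2}$ yields $|\langle N^{\mathrm{high}}_w f, (1-\psi(D_z))f\rangle| \leq \|N^{\mathrm{high}}_w f\|_{H^{1/2}}\,\|(1-\psi(D_z))f\|_{H^{-1/2}} \leq C\|f\|_{H^{-1/2}}^2$. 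Combining the two contributions gives the desired bound for $f \in \mathcal{S}$, and density concludes.

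The only genuinely delicate point, and the reason for the density argument, is that for $f$ merely in $H^{-1/2}$ the integral formula defining $X_w f$ need not converge and the piece $(1-\psi(D_z))f$ need not be square integrable; this is precisely why the a priori estimate must be proved on the dense subspace $\mathcal{S}(\R^d)$ first and then propagated by continuity. Everything else is routine: the low-frequency bound is an elementary frequency-localization estimate resting on the compact support of $\psi$, and the high-frequency bound is an immediate consequence of the order$-1$ mapping property already packaged in Lemma~\ref{Xray:Pseudo}. The tracking of seminorms of $w$ through both estimates then gives the asserted dependence of the operator norm.
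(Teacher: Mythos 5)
Your proof is correct and takes essentially the same route as the paper: the same splitting $f = \psi(D_z)f + (1-\psi(D_z))f$, with the low-frequency piece handled by the $L^2$ continuity of Remark \ref{Xray:XwL2Bound} and the high-frequency piece by pairing with $N_w^{\rm high}$ through the order $-1$ mapping property of Lemma \ref{Xray:Pseudo}. If anything, your write-up is slightly more careful than the paper's, which presents the decomposition as the exact identity $\|X_w f\|_{L^2(E)}^2 = \|X_w \psi(D_z) f\|_{L^2(E)}^2 + \langle N_w^{\rm high} f,(1-\psi(D_z))f\rangle$ (silently dropping the cross term), whereas your triangle-inequality formulation and the explicit density argument avoid that slip.
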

\begin{proof}
     By Lemma \ref{Xray:Pseudo}, $N^{\rm high}_w$ is a bounded operator from $H^{-\frac{1}{2}}(\R^d)$ to $H^{\frac{1}{2}}(\R^d)$ and this fact together with Remark \ref{Xray:XwL2Bound} implies the following bound
     \begin{align*}
          \|X_w f\|_{L^2(E)}^2 &= \|X_w \psi(D_z) f\|_{L^2(E)}^2 + \langle N_w^{\rm high} f,(1-\psi(D_z))f\rangle_{L^2(\R^d)}  \\ &\leq C^2\|\psi(D_z)f\|^2_{L^2(\R^d)} + C^2 \|f\|^2_{H^{-\frac{1}{2}}(\R^d)}. 
     \end{align*}
     This yields the corollary since $\psi(D_z)$ is a bounded operator from $H^{-1/2}$ to $L^2$ and the constant $C$ depends on  finitely many seminorms of $w$ in $\D(\R^{n+1})$.
\end{proof}

\subsection{A perturbation argument}

To prove Theorem \ref{Xray:XwStabEst}, we will use a perturbation argument and compare the attenuated ray transform with the classical X-ray transform: 
\begin{align*}
    \|f\|_{H^{-\frac{1}{2}}(\R^d)}  &\leq C_1  \|X_0f\|_{L^2(E)} \\
    &\leq C_1\|X_w f\|_{L^2(E))} + C_1\|X_{\chi(\langle t,z\rangle)(w-1)} f\|_{L^2(E)}
\end{align*}
Thanks to Corollary \ref{Xray:XwSobBound} and \eqref{Xray:SmallControlw}
\begin{align*}
    \|f\|_{H^{-\frac{1}{2}}(\R^d)}  
    &\leq C_1\|X_w f\|_{L^2(E))} + C_2 \eps \|f\|_{H^{-\frac{1}{2}}(\R^d)}
\end{align*}
and taking $\eps=1/2C_2$ is enough to prove the estimate in Theorem \ref{Xray:XwStabEst}.
This implies estimate \eqref{Xray:KeyWEstimate} when $\lambda$ is small since 
      $$  \sup_{t^2+\langle z \rangle^2 \leq 4\alpha_0^2} |\d^{\beta} (w_{\lambda}(t,z)-1)| =  \sup_{t^2+\langle z \rangle^2 \leq 4\alpha_0^2} \Big|\d^{\beta} \Big(\e^{-\lambda\big(\frac{\pi}{2}-\arctan \frac{t}{\langle z \rangle^2}\big)}-1\Big)\Big| $$
is small provided $\lambda$ is, and thus also completes the proof of Theorem \ref{Xray:AttXraySpherEst} after use of the stereographic projection.
\begin{rem}
     This (standard) perturbation argument allows to deduce stability estimates (and in particular injectivity) for weighted geodesic ray transforms on simple compact Riemannian manifolds with boundary from unweighted ones provided the weight is close enough to 1.
\end{rem}

%
%
\section{Complex geometrical optics}
\label{sec:CGO}

In this section, we construct the solutions of the Schr\"odinger equation by complex geometrical optics with logarithmic weights introduced by Kenig, Sj\"ostrand and Uhlmann. The presentation follows further developments for the anisotropic case \cite{DSFKSU,DSFKLS,KS} but restricted to the Euclidean case. Readers interested in a more conceptual and general presentation (on manifolds) are {referred} to \cite{DSFKLS,KS}.

\subsection{The Euclidean space as a warped product}\label{ssec:ccgo1}

Let $\Omega \subset \R^n$ be a bounded open set with smooth boundary, let $x_0 \notin \ch(\overline{\Omega})$, there exists $\eps>0$ such that
$B(x_0,\eps) \cap \ch(\overline{\Omega}) = \varnothing$. One can separate the convex hull  $\ch(\overline{\Omega})$ of the ball $B(x_0,\eps)$ by a hyperplane $H_{\omega_0,s_0}$ of equation $\langle x-x_0,\omega_0 \rangle = s_0$ with $\omega_0 \in S^{n-1}$ and $s_0 \geq \eps >0$
\begin{align*}
     \ch(\overline{\Omega}) &\subset H^+_{\omega_0,s_0} = \big\{x \in \R^n : \langle x-x_0,\omega_0 \rangle > s_0 \big\} \\
     B(x_0,\eps) &\subset  H^-_{\omega_0,s_0} = \big\{x \in \R^n : \langle x-x_0,\omega_0 \rangle < s_0 \big\}.
\end{align*}
Besides, there exists $\varrho_0 > \eps >0$ such that $\ch(\overline{\Omega}) \subset B(x_0,\varrho_0)$. Let $\alpha_0 = s_0/\varrho_0 \in (0,1]$, we consider the cap and hemisphere of the hypersphere with north pole 
$\omega_0$
\begin{align*}
     S^{n-1}_{>\alpha_0} &= \big\{ \omega \in S^{n-1} : \langle \omega,\omega_0 \rangle > \alpha_0 \big\} \subset S^{n-1}_+ \\
     S^{n-1}_+ &= \big\{ \omega \in S^{n-1} : \langle \omega,\omega_0 \rangle > 0 \big\}.
\end{align*}
Part of the subsequent construction is based on the observation that
\begin{align*}
     \Gamma = \bigg\{ \frac{x-x_0}{|x-x_0|} : x \in \overline{\Omega} \bigg\} \subset S^{n-1}_{>\alpha_0}.
\end{align*}
One considers the following coordinates in $\overline{\Omega}$
\begin{align*}
     t = \log|x-x_0| \in \R, \quad \omega = \frac{x-x_0}{|x-x_0|} \in \Gamma  \subset S^{n-1}_{>\alpha_0},
\end{align*}
with respect to which the Laplace operator reads
\begin{align*}
     \Delta = \e^{-2t} (\d_t^2 +(n-2)\d_t + \Delta_{S^{n-1}}).
\end{align*}
By conjugation, this expression can be further reduced into
\begin{align*}
     \Delta = \e^{-\frac{n+2}{2}t} \big(\d_t^2 + \widehat{\Delta}_{S^{n-1}}\big)\e^{\frac{n-2}{2}t}
\end{align*}
where $\widehat{\Delta}_{S^{n-1}}=\Delta_{S^{n-1}}-(n-2)^2/4$.
Note that in those coordinates, the Lebesgue measure becomes 
     $$ \dd x = \e^{nt} \, \dd t \wedge \dd \omega, $$
{where $\dd \omega$ denotes the Lebesgue measure on $S^{n - 1}$}.
\begin{rem}
     The former computations are based on the fact that using polar coordinates $(r,\omega)$ centered at $x_0$, $\Omega$ can be imbedded in $\R_+ \times S^{n-1}_+$, and in polar coordinates the Euclidean metric looks like
       $$ |\dd x|^2 = \dd r^2 + r^{2} g_{{n-1}} $$
where $g_{n-1}$ is the canonical metric on the hypersphere $S^{n-1}$.   
This means that $\Omega$ can be seen as a submanifold of the warped product $(\R_+,\dd r^2) \times_{r^{-2}} (S^{n-1},g_{{n-1}})$ of the Euclidean line with the hypersphere. 

Warped profucts with an Euclidean factor are manifolds for which there exist limiting Carleman weights as shown in \cite{DSFKSU,DSFKS}. Existence of limiting Carleman weights is a necessary condition to perform the construction of solutions of the Schr\"odinger equation by use of complex geometric optics in the spirit%
\footnote{That is with antagonizing exponential behaviour.}
of Sylvester and Uhlmann~\cite{SU}. One can unwarp the metric by making the change of variable $t=\log r$ as observed in~\cite[Remark {1.4}]{DSFKS}.    
\end{rem}

We are now interested in approximate solutions of the Schr\"odinger equation of the form
    $$ u = \e^{-\big(s+\frac{n-2}{2}\big)t} v_s(\omega), \quad s \in \H  $$
where $v_s$ is a quasimode of $\widehat{\Delta}_{S^{n-1}}$  
\begin{align*}
     \big(\widehat{\Delta}_{S^{n-1}}+s^2\big)v_s = \mathcal{O}_{L^2(S^{n-1}_+)}(1).
\end{align*}
With this choice, we have
\begin{align*}
     \Delta u &=  \e^{-\big(s+\frac{n+2}{2}\big)t} \bigg(\Delta_{S^{n-1}}-\frac{(n-2)^2}{4}+s^2\bigg)v_s
\end{align*}
and therefore $u$ is an approximate solution%
\footnote{This is justified by the fact that the behaviour with respect to $s$ is \textit{better} than the \textit{expected} one, which is of $|s|^2 \e^{- \re s \, t}$ because of the fact that two derivatives fall on $\e^{st}$.}
of the Schr\"odinger equation
\begin{align*}
     \| {|x - x_0|^s} (-\Delta+q)u\|_{L^2{(\Omega)}} \leq C.
\end{align*}

\subsection{Quasimodes on the hemisphere}\label{ssec:ccgo2}
Recall that we are considering the hemisphere $S^{n-1}_+$ with north pole $\omega_0$. We will look for quasimodes of the form
\begin{align*}
     v_s(\omega) = \e^{i s d_{S^{n-1}}(\omega,y)} a, \quad y \in \d S^{n-1}_+.
\end{align*}
The construction may be done as in \cite{DSFKSU} by using the fact that the phase $\psi(\omega)=d_{S^{n-1}}(\omega,y)$ solves the eikonal equation
     $$ |\dd \psi|^2_{g_{n-1}} = 1 $$
and by taking $a$ to be the solution of a transport equation. Here, we choose to be slightly more explicit. We refer to \cite{DSFKSU,DSFKLS,KS} for a general approach on manifolds.

{For $y \in \partial S^{n - 1}_+$,} we parametrize the unit sphere $S^{n-1}$ in the following way
\begin{align*}
    \omega = (\cos \theta) y + (\sin \theta) \eta, \quad  
    \theta \in [0,\pi], \quad \eta \in S^{n-1} \cap y^{\perp} \simeq S^{n-2}
\end{align*}
so that $\theta = d_{S^{n-1}}(\omega,y)$ and $\omega=\exp_{y} (\theta \eta)$. The canonical Riemannian metric on the sphere reads in those coordinates
\begin{align*}
      g_{n-1} = \dd \theta^2  + (\sin^2 \theta) \, g_{n-2} \quad \theta \in [0,\pi] 
\end{align*} 
and in particular
    $$ \sqrt{\det g_{n-1}} = (\sin \theta)^{n-2} \sqrt{\det g_{n-2}}. $$
Note that again this canonical metric has the form of a warped product.
In those coordinates, the Laplace-Beltrami operator on the sphere reads%
\footnote{In particular when $n=3$, one recovers the well-known expression of the Laplacian-Beltrami operator on the two dimensionnal sphere
\begin{align*}
     \Delta_{S^{2}} = \frac{\d^2}{\d\theta^2} + \cot \theta \, \frac{\d}{\d\theta} + \frac{1}{\sin^2 \theta} \frac{\d^2}{\d\eta^2}.
\end{align*}}
\begin{align*}
     \Delta_{S^{n-1}} = \frac{1}{(\sin \theta)^{n-2}}\frac{\d}{\d\theta} \bigg((\sin \theta)^{n-2} \frac{\d}{\d\theta}\bigg) + \frac{1}{\sin^2 \theta} \, \Delta_{S^{n-2}}.
\end{align*}
As before, the expression of the Laplacian operator may be further simplified by conjugation: 
\begin{align*}
     \Delta_{S^{n-1}} = \frac{1}{(\sin \theta)^{\frac{n-2}{2}}}\frac{\d^2}{\d\theta^2} (\sin \theta)^{\frac{n-2}{2}}  + \frac{1}{\sin^2 \theta} \, \Delta_{S^{n-2}} +  \frac{1}{(\sin \theta)^{n-2}} [L^{\dag},L]
\end{align*}
where $L=(\sin \theta)^{\frac{n-2}{2}} \d/\d\theta$ and $L^{\dag} ={ \partial/\partial\theta (\sin \theta)^\frac{n-2}{2}}$.

Computing the commutator 
\begin{align*}
     [L^{\dag},L]&=-(\sin \theta)^{\frac{n-2}{2}} \frac{\d^2}{\d\theta^2}\Big((\sin \theta)^{\frac{n-2}{2}}\Big) \\ &= \frac{(n-2)^2}{4} (\sin \theta)^{n-2} - \frac{(n-2)(n-4)}{4} (\sin \theta)^{n-4}        
\end{align*}
gives  the following expression for the shifted Laplace-Beltrami operator $\widehat{\Delta}_{S^{n-1}}$ on the hypersphere
\begin{align}
\label{0:ConjugatedLaplaceSphere}
     \widehat{\Delta}_{S^{n-1}} =   (\sin \theta)^{-\frac{n-2}{2}} \bigg(\d_{\theta}^2 + \frac{1}{\sin^2 \theta} \, \widetilde{\Delta}_{S^{n-2}}\bigg)  (\sin \theta)^{\frac{n-2}{2}}
\end{align}
with 
    $$  \widetilde{\Delta}_{S^{n-2}} = \Delta_{S^{n-2}} - \frac{(n-2)(n-4)}{4}. $$
We note that in those coordinates, we have
\begin{align*}
     \omega \in S^{n-1}_{+} \Rightarrow \langle \omega,\omega_0 \rangle = \sin \theta \langle \eta,\omega_0 \rangle > 0
\end{align*}
hence $\theta \in (0,\pi)$ and $\eta \in S^{n-1}_+ \cap y^{\perp}$.

Now, we can look for quasimodes using separation of variables
\begin{align*}
      v_s(\omega) = (\sin \theta)^{-\frac{n-2}{2}} \alpha(\theta) b(\eta), 
\end{align*}
where $\alpha$ and $b$ are respective quasimodes for the operators $\d_{\theta}^2+s^2$ and $\tilde{\Delta}_{S^{n-2}}$
 \begin{align*}
       (\d_{\theta}^2+s^2)\alpha = \mathcal{O}(1), \quad 
       \widetilde{\Delta}_{S^{n-2}}b= \mathcal{O}(1) \quad \text{ as }
       |s| \to \infty.
\end{align*}
This leads for instance to the choice $\alpha=\e^{is \theta}$ and $b$ to be any smooth function of $\eta$ independent of $s$. We will therefore work with the following quasimode
\begin{align*}
     v_s(\omega) = (\sin \theta)^{-\frac{n-2}{2}} \e^{i s \theta} b(\eta), \quad \omega = \exp_y(\theta \eta)
\end{align*}
for which we have indeed
\begin{align*}
      \int_{S^{n-1}_+} \big|\big(\widehat{\Delta}_{S^{n-1}}+s^2\big)
      v_s(\omega)\big|^2 \, \dd \omega &= \int_{S^{n-2}_+} \int_0^{\pi} 
      \e^{-(\im s) \theta} \big|\widetilde{\Delta}_{S^{n-2}} b\big|^2\, \dd \theta \, \dd \eta 
 \\ &= \bigg(\frac{1-\e^{-(\im s) \pi}}{\im s}\bigg) \, \|\widetilde{\Delta}_{S^{n-2}} b\|^2_{L^2(S_y(S^{n-1}_+))} \\ &\leq \pi \|\widetilde{\Delta}_{S^{n-2}} b\|^{2}_{L^2(S_y(S^{n-1}_+))},
\end{align*}
when $\im s>0$. {Here $S_y(S^{n - 1}_+) = S^{n - 1}_+ \cap y^\perp$.} The $L^2$ norm of this quasimode may be bounded by
\begin{align*}
     \int_{S^{n-1}_+} |v_s(\omega) |^2 \, \dd \omega &= \int_{S^{n-2}_+} \int_0^{\pi} |v_s|^2 (\sin \theta)^{n-2}\, \dd \theta \, \dd \eta 
 \\ &= \bigg(\frac{1-\e^{-(\im s) \pi}}{\im s}\bigg) \, \|b\|^2_{L^2(S_y(S^{n-1}_+))} \leq \pi \|b\|^{2}_{L^2(S_y(S^{n-1}_+))}.
\end{align*}
Similarly, we can bound the approximate solution $u_s=|x-x_0|^{-\big(s + \frac{n - 2}{2} \big)} v_s$ in some weighted $L^2$ space
\begin{align*}
     \int_{\Omega} |x-x_0|^{2\re s} \, |u_s(x)|^2 \dd x &\leq  \int_{\eps}^{{\varrho_0}}\int_{S^{n-1}_+} \e^{2t} \, |v_s(\omega) |^2 \, \dd t \, \dd \omega \\ &\leq \bigg(\frac{\e^{2{\varrho_0}}-\e^{2\eps}}{2}\bigg) \|b\|^{2}_{L^2(S_y(S^{n-1}_+))}. 
\end{align*}
This completes our construction.

\subsection{Global Carleman estimates}

The estimation of the Dirichlet-to-Neumann map outside of the front set is based on a global Carleman estimate (i.e. a Carleman estimate with boundary terms).
\begin{thm}
\label{CGO:CarlEstThm}
      Let $\Omega \subset \R^n$ be an open bounded set with smooth boundary, let $M>0$ and $x_0 \notin \ch(\overline{\Omega})$, there exist constants
$C>0,\tau_0$ such that for all $\tau \geq \tau_0$, all $q \in L^{\infty}(\Omega)$ such that $\|q\|_{L^{\infty}} \leq M$ and all $u \in C^{\infty}(\Omega) \cap H^1_0(\Omega)$  the following Carleman estimate
     \begin{multline*}
           \| |x-x_0|^{-\tau} \d_{\nu}u\|_{L_w^2(B(x_0))} + \tau^{\frac{1}{2}} \||x-x_0|^{-\tau} u\|_{L^2(\Omega)} + \tau^{-\frac{1}{2}}\||x-x_0|^{-\tau} \nabla u\|_{L^2(\Omega)} 
          \\\leq   C \| |x-x_0|^{-\tau} \d_{\nu}u\|_{L^2_w(F(x_0))} + C\tau^{-\frac{1}{2}}\||x-x_0|^{-\tau} (-\Delta+q)u\|_{L^2(\Omega)}
     \end{multline*}
holds true. The norms of the boundary terms are weighted $L^2$ norms
     $$ L^2_w(\Gamma) = L^2(\Gamma,w\,\dd S), \quad  w(x)=\frac{|\langle \nu, x-x_0\rangle|}{|x-x_0|^2}. $$
\end{thm}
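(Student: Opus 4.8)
The plan is to obtain the estimate from the standard Carleman machinery attached to the limiting Carleman weight $\varphi(x)=\log|x-x_0|$, whose gradient $\nabla\varphi=(x-x_0)/|x-x_0|^2$ is exactly what produces the boundary weight $w$ after integration by parts. I would set $v=|x-x_0|^{-\tau}u=\e^{-\tau\varphi}u$ and conjugate, writing $|x-x_0|^{-\tau}(-\Delta)u=P_\varphi v$ with
\[ P_\varphi=\e^{-\tau\varphi}(-\Delta)\e^{\tau\varphi}=-\Delta-2\tau\nabla\varphi\cdot\nabla-\tau\Delta\varphi-\tau^2|\nabla\varphi|^2. \]
It is convenient to treat the case $q=0$ first and to reinstate the potential only at the end. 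I split $P_\varphi=S+A$ into its formally self-adjoint and skew-adjoint parts on $L^2(\Omega)$, namely $S=-\Delta-\tau^2|\nabla\varphi|^2$ and $A=-2\tau\nabla\varphi\cdot\nabla-\tau\Delta\varphi$.

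Next I would expand the square
\[ \|P_\varphi v\|_{L^2(\Omega)}^2=\|Sv\|^2+\|Av\|^2+2\re\langle Sv,Av\rangle \]
and integrate by parts in the cross term. Because $u\in H^1_0(\Omega)$ we have $v|_{\d\Omega}=0$, so all tangential derivatives of $v$ vanish on $\d\Omega$ and only $\d_\nu v=|x-x_0|^{-\tau}\d_\nu u$ survives there. The cross term then produces the interior commutator $\langle[S,A]v,v\rangle$ together with a boundary integral whose leading part is a multiple of $\tau\int_{\d\Omega}\d_\nu\varphi\,|\d_\nu v|^2\,\dd S$. Since $\d_\nu\varphi=\langle\nu,x-x_0\rangle/|x-x_0|^2$ is nonpositive on $F(x_0)$ and nonnegative on $B(x_0)$, splitting this integral by sign yields a nonnegative contribution $\tau\,\||x-x_0|^{-\tau}\d_\nu u\|^2_{L^2_w(B(x_0))}$, which I keep on the left, and a term $\tau\,\||x-x_0|^{-\tau}\d_\nu u\|^2_{L^2_w(F(x_0))}$, which I move to the right. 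This is precisely where the weight $w=|\langle\nu,x-x_0\rangle|/|x-x_0|^2$ and the front/back splitting come from.

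The heart of the matter, and the step I expect to be the main obstacle, is the interior positivity. The weight $\varphi$ is \emph{limiting}: in the warped-product coordinates of Subsection~\ref{ssec:ccgo1} the conjugated operator has coefficients independent of $t=\log|x-x_0|$, so the principal part of $[S,A]$ degenerates and the naive commutator provides no gain. I would circumvent this by the usual convexification, replacing $\varphi$ by $\varphi_\eps=f(\varphi)$ with $f(s)=s+\tfrac{\eps}{2}s^2$. Since $\varphi$ ranges over a compact interval on $\overline\Omega$, for $\eps$ small $f'(\varphi)$ stays bounded above and below, so $\nabla\varphi_\eps=f'(\varphi)\nabla\varphi$ keeps the same sign and the same size as $\nabla\varphi$ --- the boundary weight and the front/back splitting are thus unaffected up to constants --- while the extra Hessian term $f''(\varphi)\,\nabla\varphi\otimes\nabla\varphi=\eps\,\nabla\varphi\otimes\nabla\varphi$ is positive definite and restores strict positivity of the commutator. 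Combined with $\|Sv\|^2$ through a G\aa rding/elliptic bound, this gives the interior lower bound $\tau^2\||x-x_0|^{-\tau}u\|_{L^2(\Omega)}^2+\||x-x_0|^{-\tau}\nabla u\|_{L^2(\Omega)}^2$, the passage between $\nabla v$ and $|x-x_0|^{-\tau}\nabla u$ costing only a term $\tau^2\||x-x_0|^{-\tau}u\|^2$ that is already controlled.

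Collecting the interior and boundary contributions, dividing by $\tau$ and taking square roots yields the stated estimate when $q=0$. To reinstate the potential I would write $(-\Delta)u=(-\Delta+q)u-qu$, so that
\[ \tau^{-\frac12}\||x-x_0|^{-\tau}(-\Delta)u\|\le \tau^{-\frac12}\||x-x_0|^{-\tau}(-\Delta+q)u\|+M\tau^{-\frac12}\||x-x_0|^{-\tau}u\|. \]
The last term is absorbed by the term $\tau^{\frac12}\||x-x_0|^{-\tau}u\|$ on the left as soon as $\tau\ge\tau_0$ with $\tau_0$ large enough depending on $M$. Apart from the convexification and commutator-positivity step, the remaining work is the routine but careful sign bookkeeping of the boundary terms and the absorption of lower-order contributions into the large parameter $\tau$.
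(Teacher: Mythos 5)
Your outline is structurally the right one --- in fact the paper itself does not prove Theorem \ref{CGO:CarlEstThm} at all, but refers to Kenig--Sj\"ostrand--Uhlmann \cite{KSU} for the proof, and what you describe (conjugation by $|x-x_0|^{-\tau}$, splitting of the conjugated operator into self- and skew-adjoint parts, a boundary term proportional to $\tau\int_{\partial\Omega}\partial_\nu\varphi\,|\partial_\nu v|^2\,\dd S$ split according to the sign of $\partial_\nu\varphi$, with the $B(x_0)$ part kept on the left and the $F(x_0)$ part moved to the right, and the absorption of the potential for $\tau\geq\tau_0(M)$) is exactly the argument of that reference, with correct sign bookkeeping.

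However, your convexification step has a genuine gap. You take $\varphi_\eps=\varphi+\frac{\eps}{2}\varphi^2$ with $\eps$ small but \emph{independent of} $\tau$, and claim the change is harmless up to constants. That is true for $\nabla\varphi_\eps=f'(\varphi)\nabla\varphi$, so the boundary weight $w$ and the front/back splitting are indeed unaffected; but it is false for the exponential weight itself. The estimate your argument produces is for the weight $\e^{-\tau\varphi_\eps}$, and
\[ \e^{-\tau\varphi_\eps}=\e^{-\tau\varphi}\,\e^{-\tau\eps\varphi^2/2}, \]
where $\varphi^2=(\log|x-x_0|)^2$ is not constant on $\overline{\Omega}$. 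Converting an estimate in the weight $\varphi_\eps$ into one in the weight $\varphi$ therefore costs a factor of order $\e^{\tau\eps(\max_{\overline{\Omega}}\varphi^2-\min_{\overline{\Omega}}\varphi^2)/2}$, exponentially large in $\tau$, which destroys the statement --- and its application, since the whole point of the theorem is that the weight matches exactly the growth $|x-x_0|^{\pm s}$ of the complex geometrical optics solutions. The fix, and this is what \cite{KSU} actually does, is to couple the convexification to the large parameter: take
\[ \varphi_\eps=\varphi+\frac{\varphi^2}{2\eps\tau}, \qquad\text{so that}\qquad \tau\varphi_\eps=\tau\varphi+\frac{\varphi^2}{2\eps}, \]
and $\e^{-\tau\varphi_\eps}$, $\e^{-\tau\varphi}$ now differ by a $\tau$-independent factor bounded above and below on $\overline{\Omega}$. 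The price is that the Hessian gain becomes $f''=1/(\eps\tau)$, so the commutator positivity is of size $\tau^2/\eps$ rather than $\eps\tau^3$; this is precisely why the theorem carries the gain $\tau^{\frac{1}{2}}\|\cdot u\|+\tau^{-\frac{1}{2}}\|\cdot\nabla u\|$ (total gain $\tau$ after squaring) instead of the $\tau^{\frac{3}{2}}$-gain available for genuinely pseudoconvex weights. With this one change, the rest of your argument goes through as written.
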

We refer to \cite{KSU} for a proof, to \cite{DSFKSjU} for the case of a magnetic Schr\"odinger equation and to \cite{KS} for an improved version of this estimate. An anisotropic version for Laplace-Beltrami operators on a manifold was given in \cite{DSFKSU}. All of these articles establish estimates for general limiting Carleman weights (see \cite{KSU,DSFKSU} for a definition of this notion), i.e. weights which are degenerate in the context of Carleman estimates. Note that it is easy to see that the constant $C$ is uniform with respect to the potential, provided the potential remains in a ball in $L^{\infty}(\Omega)$ since the estimate is proved first for $q=0$ and then perturbed into the one with potential.

For $\delta>0$, we introduce the set
    $$ F_{\delta}(x_0) = \big\{x \in \d\Omega : \langle x-x_0,\nu \rangle \leq \delta |x-x_0|^2 \big\} \supset F(x_0). $$
We are now in a position to write estimates on the Dirichlet-to-Neumann map.
\begin{clry}
\label{CGO:CarlDNClry}
      Let $\Omega \subset \R^n$ be an open bounded set with smooth boundary, let $M,\delta>0$ and $x_0 \notin \ch(\overline{\Omega})$, there exist constants
$C>0,\tau_0$ such that for all $\tau \geq \tau_0$ all $q_1,q_2 \in L^{\infty}(\Omega)$ such that $\|q_1\|_{L^{\infty}},\|q_2\|_{L^{\infty}} \leq M$ and all $f \in H^{\frac{1}{2}}(\d\Omega)$ we have 
     \begin{multline*}
           \| |x-x_0|^{-\tau} (\Lambda_{q_1}-\Lambda_{q_2})f\|_{L^2(\d \Omega \setminus F_{\delta}(x_0))} 
          \leq   \\ C\||x-x_0|^{-\tau}(\Lambda_{q_1}-\Lambda_{q_2})f\|_{L^2(F(x_0))} + C\tau^{-\frac{1}{2}}\||x-x_0|^{-\tau} (q_1-q_2)u\|_{L^2(\Omega)}
     \end{multline*}
     where $u$ is the solution of the Schr\"odinger equation \eqref{Intro:DirichletSchrod} with potential $q_1$ or $q_2$ and Dirichlet datum $f$.
\end{clry}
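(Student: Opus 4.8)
The plan is to apply the global Carleman estimate of Theorem~\ref{CGO:CarlEstThm} to the difference of the two solutions and then convert the weighted boundary norms it produces into the unweighted ones appearing in the statement.

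First I would set $u = u_1 - u_2$, where $u_i \in H^1(\Omega)$ denotes the solution of \eqref{Intro:DirichletSchrod} with potential $q_i$ and Dirichlet datum $f$. Since both solutions have the same trace $f$ on $\partial \Omega$, the difference belongs to $H^1_0(\Omega)$, and its normal derivative is exactly $\d_\nu u = (\Lambda_{q_1} - \Lambda_{q_2})f$. A direct computation using $(-\Delta + q_1)u_1 = (-\Delta + q_2)u_2 = 0$ gives
\begin{align*}
     (-\Delta + q_1)u = -(q_1 - q_2)u_2,
\end{align*}
so that $u$ solves a Schr\"odinger equation with potential $q_1$ and source term $-(q_1-q_2)u_2$ (symmetrically, one may use $q_2$ and the source $-(q_1-q_2)u_1$, which explains the phrasing ``with potential $q_1$ or $q_2$''). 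After a routine density argument approximating $u$ by smooth functions in $H^1_0(\Omega)$, the Carleman estimate applies; discarding the nonnegative interior terms on its left-hand side, it yields
\begin{multline*}
     \| |x-x_0|^{-\tau} (\Lambda_{q_1}-\Lambda_{q_2})f\|_{L^2_w(B(x_0))} \\ \leq C \| |x-x_0|^{-\tau} (\Lambda_{q_1}-\Lambda_{q_2})f\|_{L^2_w(F(x_0))} + C\tau^{-\frac{1}{2}}\||x-x_0|^{-\tau} (q_1 - q_2)u_2\|_{L^2(\Omega)}.
\end{multline*}

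It then remains to exchange the weighted boundary norms for the ones in the corollary, which is where the parameter $\delta$ enters. On the one hand, $\partial \Omega \setminus F_\delta(x_0) = \{x \in \partial \Omega : \langle x - x_0, \nu \rangle > \delta |x-x_0|^2\}$ is contained in the back set $B(x_0)$, and there the weight satisfies $w(x) = \langle x-x_0,\nu \rangle/|x-x_0|^2 > \delta$; hence the unweighted $L^2(\partial \Omega \setminus F_\delta(x_0))$ norm on the left is controlled by $\delta^{-1/2}$ times the weighted $L^2_w(B(x_0))$ norm. On the other hand, since $x_0 \notin \ch(\overline{\Omega})$ one has $|x-x_0| \geq \eps > 0$ uniformly on $\partial \Omega$, so by Cauchy--Schwarz the weight $w(x) = |\langle \nu, x-x_0 \rangle|/|x-x_0|^2 \leq 1/|x-x_0| \leq \eps^{-1}$ is bounded above; consequently the weighted $L^2_w(F(x_0))$ norm on the right is bounded by a constant multiple of the unweighted $L^2(F(x_0))$ norm. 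Substituting these two comparisons into the displayed inequality and absorbing constants gives exactly the statement of the corollary.

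The estimate is thus almost immediate from Theorem~\ref{CGO:CarlEstThm}; the analytic content is entirely contained in the Carleman estimate. The only points requiring a little care are the approximation of $u \in H^1_0(\Omega)$ by smooth functions so as to legitimately invoke the estimate, together with the elliptic regularity ensuring that the normal derivatives $\d_\nu u_i$ lie in $L^2(\partial \Omega)$ so that the boundary terms are finite, and the elementary verification of the two weight inequalities and of the inclusion $\partial \Omega \setminus F_\delta(x_0) \subset B(x_0)$. I expect no genuine obstacle beyond this bookkeeping.
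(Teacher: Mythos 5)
Your proof is correct and follows essentially the same route as the paper: apply the Carleman estimate of Theorem~\ref{CGO:CarlEstThm} (extended by regularization) to $u_1-u_2 \in H^1_0(\Omega)$, which satisfies $(\Delta-q_1)(u_1-u_2)=(q_1-q_2)u_2 \in L^2(\Omega)$, and then remove the weights using $\delta \leq w(x)$ on $\d\Omega \setminus F_{\delta}(x_0) \subset B(x_0)$ together with $w(x) \leq |x-x_0|^{-1} \leq \eps^{-1}$. The weight comparisons and the inclusion you verify are exactly the observations the paper invokes, so there is nothing to add.
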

\begin{proof}
     By standard regularization procedures, the Carleman estimate of Theorem \ref{CGO:CarlEstThm} remains true for functions $u \in H^1_0(\Omega)$ such that $(-\Delta+q)u \in L^2(\Omega)$. Let $u_1,u_2$ be the solution of the Schr\"odinger equation \eqref{Intro:DirichletSchrod} with respective potential $q_1$ or $q_2$ and Dirichlet datum $f$. We have $u_1-u_2 \in H^1_0(\Omega)$ and 
     $$ (\Delta-q_1)(u_1-u_2) = (q_1-q_2) u_2 \in L^2(\Omega) $$
The estimate follows by applying Theorem {\ref{CGO:CarlEstThm}} to the function $u_1-u_2$
and observing that
\begin{align*}
     \delta \leq w(x),  \; x \in \d\Omega \setminus F_{\delta}(x_0) \subset B(x_0), \quad \text{ and } \quad w(x) \leq |x-x_0|^{-1} 
\end{align*}
to get rid of the weights.
\end{proof}

\subsection{Complex geometrical optics}

We summarize the construction of solutions to the Schr\"odinger equation by use of complex geometrical optics in the following theorems.
\begin{thm}
     Let $M > 0$, $q \in L^{\infty}(\Omega)$ with $\| q \|_{L^\infty} \leq M$ and $x_0 \notin \ch(\bar{\Omega})$, there exists a solution of the Schr\"odinger equation $-\Delta u + qu=0$ of the form
     \begin{align*}
          u_{s}{(x)} &= |x-x_0|^{-s}({|x -x_0|^{-\frac{n - 2}{2}}}v_s(x)+r_{s,q}(x)) 
     \end{align*}
     where $v_s$ is a quasimode of $\hat{\Delta}_{S^{n-1}}$, of the form 
           $$ v_s(x) = \bigg(\sin d_{S^{n-1}}\bigg(\frac{x-x_0}{|x-x_0|},y\bigg)\bigg)^{-\frac{n-2}{2}} \e^{i s d_{S^{n-1}}\big(\frac{x-x_0}{|x-x_0|},y\big)} b $$
     with $y \in \d_+S^{n-1}$ and $b$ any smooth function of the variable $\eta$ if $(x-x_0)/|x-x_0|= \exp_y(\theta \eta)$  and where the remainder $r_{s,q}$ satisfies the estimate
	{\begin{align*}
          \|r_{s,q}\|_{L^2(\Omega)} &\leq C |\re s|^{-1} \|b\|_{H^2(S_y(S^{n-1}_+))}, \\
          \|\nabla r_{s,q}\|_{L^2(\Omega)} &\leq C \Big( \frac{|\im s|}{|\re s|} + 1 \Big) \|b\|_{H^2(S_y(S^{n-1}_+))}
	\end{align*}}
     with a constant $C$ which depends on $M$ and $\Omega$.
\end{thm}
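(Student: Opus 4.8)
The plan is to promote the explicit approximate solution built in Sections~\ref{ssec:ccgo1} and~\ref{ssec:ccgo2} into an exact one by inverting the operator $-\Delta+q$ against the weight $|x-x_0|^{-s}$, the inversion being furnished by the Carleman estimate of Theorem~\ref{CGO:CarlEstThm}. Recall first that the function $w=|x-x_0|^{-(s+\frac{n-2}{2})}v_s$, with $v_s=(\sin\theta)^{-\frac{n-2}{2}}\e^{is\theta}b$ the quasimode of Section~\ref{ssec:ccgo2}, is an approximate solution: combining $\Delta w=|x-x_0|^{-(s+\frac{n+2}{2})}(\widehat{\Delta}_{S^{n-1}}+s^2)v_s$ with the quasimode bound $\int_{S^{n-1}_+}|(\widehat{\Delta}_{S^{n-1}}+s^2)v_s|^2\,\dd\omega\leq\pi\|\widetilde{\Delta}_{S^{n-2}}b\|^2_{L^2(S_y(S^{n-1}_+))}$ and the measure $\dd x=\e^{nt}\,\dd t\,\dd\omega$, one obtains
$$ \|\,|x-x_0|^{\re s}(-\Delta+q)w\|_{L^2(\Omega)}\leq C\|b\|_{H^2(S_y(S^{n-1}_+))}. $$
The contribution of the potential $qw$ is harmless because, on $\Gamma\subset S^{n-1}_{>\alpha_0}$, the distance $\theta=d_{S^{n-1}}(\omega,y)$ stays away from $0$ and $\pi$, so $\sin\theta$ is bounded below.

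We set $\tau=|\re s|$, taking $\re s=-\tau<0$ so that $|x-x_0|^{\re s}=|x-x_0|^{-\tau}$ coincides with the Carleman weight. We look for $u_s=w+\rho$ with $\rho=|x-x_0|^{-s}r_{s,q}$; then $(-\Delta+q)u_s=0$ is equivalent to $(-\Delta+q)\rho=F$ with $F:=-(-\Delta+q)w$. The next step is solvability. Applying Theorem~\ref{CGO:CarlEstThm} to the formal adjoint $-\Delta+\bar q$ (which obeys the same estimate since $\|\bar q\|_{L^\infty}=\|q\|_{L^\infty}\leq M$) and testing against $v\in\D(\Omega)$, all boundary terms vanish, giving
$$ \tau^{\frac12}\|\,|x-x_0|^{-\tau}v\|_{L^2}+\tau^{-\frac12}\|\,|x-x_0|^{-\tau}\nabla v\|_{L^2}\leq C\tau^{-\frac12}\|\,|x-x_0|^{-\tau}(-\Delta+\bar q)v\|_{L^2}. $$
By the standard Hahn--Banach and Riesz duality argument, this a priori estimate produces, for every $F$ with $\|\,|x-x_0|^{-\tau}F\|_{L^2}<\infty$, a solution $\rho$ of $(-\Delta+q)\rho=F$ in $\Omega$ satisfying
$$ \|\,|x-x_0|^{-\tau}\rho\|_{L^2}\leq C\tau^{-1}\|\,|x-x_0|^{-\tau}F\|_{L^2},\qquad \|\,|x-x_0|^{-\tau}\nabla\rho\|_{L^2}\leq C\|\,|x-x_0|^{-\tau}F\|_{L^2}. $$

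It remains to convert these into the claimed bounds. With $r_{s,q}=|x-x_0|^s\rho$ one has $\|r_{s,q}\|_{L^2}=\|\,|x-x_0|^{-\tau}\rho\|_{L^2}\leq C\tau^{-1}\|b\|_{H^2(S_y(S^{n-1}_+))}$, the first estimate. For the gradient, the product rule gives $|\nabla r_{s,q}|\leq|x-x_0|^{-\tau}|\nabla\rho|+|s|\,|x-x_0|^{-\tau-1}|\rho|$; since $|x-x_0|\geq\eps$ on $\Omega$, taking $L^2$ norms and inserting the two weighted bounds yields
$$ \|\nabla r_{s,q}\|_{L^2}\leq C\|b\|_{H^2(S_y(S^{n-1}_+))}\Big(1+\tfrac{|s|}{\tau}\Big)\leq C\|b\|_{H^2(S_y(S^{n-1}_+))}\Big(1+\tfrac{|\im s|}{|\re s|}\Big), $$
where we used $|s|\leq|\re s|+|\im s|$. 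This is the second estimate, and the construction is complete.

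\emph{Main obstacle.} The heart of the argument is the passage from the Carleman a priori estimate to the solvability estimate, where one must handle the non-self-adjointness by working with $-\Delta+\bar q$ and ensure the boundary terms genuinely drop out by restricting the test functions to $\D(\Omega)$, all while extracting both the $L^2$ and the gradient control from the single duality bound. Everything else is careful bookkeeping of the weights $|x-x_0|^{\pm\re s}$ and of the powers of $\tau=|\re s|$; note in particular that the full gain $\tau^{-1}$ in the $L^2$ estimate — which is precisely what yields the $|\re s|^{-1}$ factor — comes from combining the $\tau^{1/2}$ on the left with the $\tau^{-1/2}$ on the right of the Carleman estimate.
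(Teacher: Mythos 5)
Your overall route is the same as the paper's: take the approximate solution $|x-x_0|^{-s-\frac{n-2}{2}}v_s$ built in subsections~\ref{ssec:ccgo1} and~\ref{ssec:ccgo2} and solve away the error by the Hahn--Banach/Riesz method of a priori estimates based on a Carleman estimate. However, your duality step contains a sign error in the weights which makes it fail as written. Having fixed $\re s=-\tau<0$, you need a solution of $(-\Delta+q)\rho=F$ with $\||x-x_0|^{-\tau}\rho\|_{L^2}\leq C\tau^{-1}\||x-x_0|^{-\tau}F\|_{L^2}$. Running the duality: for $v\in\D(\Omega)$ one has
\begin{align*}
\langle\rho,(-\Delta+\bar q)v\rangle=\big\langle|x-x_0|^{-\tau}\rho,\,|x-x_0|^{+\tau}(-\Delta+\bar q)v\big\rangle,
\qquad
|\langle F,v\rangle| \leq \||x-x_0|^{-\tau}F\|_{L^2}\,\||x-x_0|^{+\tau}v\|_{L^2},
\end{align*}
so the a priori estimate that Hahn--Banach/Riesz actually requires is
\begin{align*}
\||x-x_0|^{+\tau}v\|_{L^2}\leq C\tau^{-1}\||x-x_0|^{+\tau}(-\Delta+\bar q)v\|_{L^2},\qquad v\in\D(\Omega),
\end{align*}
i.e.\ the Carleman estimate with the \emph{opposite} weight $|x-x_0|^{+\tau}$. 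Duality flips the weight: Theorem~\ref{CGO:CarlEstThm} restricted to $\D(\Omega)$ (weight $|x-x_0|^{-\tau}$) produces solutions satisfying $\||x-x_0|^{+\tau}\rho\|\leq C\tau^{-1}\||x-x_0|^{+\tau}F\|$, which is what one needs for $\re s=+\tau>0$, not for your choice $\re s=-\tau$. (Trying to repair this by comparing the two weights on $\Omega$ costs a factor $(\varrho_0/\eps)^{2\tau}$, which destroys the $O(|\re s|^{-1})$ bound.) The fix is either to take $\re s=+\tau$, or to invoke the Carleman estimate for the weight $+\log|x-x_0|$ --- true, since $\pm\log|x-x_0|$ are both limiting Carleman weights, and contained in the estimate \cite[(4.3)]{KSU} that the paper's proof cites, but not in Theorem~\ref{CGO:CarlEstThm} as stated. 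Note moreover that the theorem is applied in the stability section with \emph{both} signs of the real part (a growing solution $|x-x_0|^{s}(\cdots)$ is paired with a decaying one $|x-x_0|^{-\bar s}(\cdots)$), so a complete proof must supply both weighted estimates in any case; your argument, even once the sign is repaired, covers only one of the two.

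Second, the gradient bound does not follow from the duality argument you invoke. The Hahn--Banach/Riesz construction applied to the $L^2$ a priori estimate yields an $L^2$ solution with the $L^2$ bound only; the gradient term on the left-hand side of Theorem~\ref{CGO:CarlEstThm} never enters that argument, and a function $\rho\in L^2(\Omega)$ with $\Delta\rho\in L^2(\Omega)$ need not have $\nabla\rho\in L^2(\Omega)$ up to the boundary. So the claimed estimate $\||x-x_0|^{-\tau}\nabla\rho\|_{L^2}\leq C\||x-x_0|^{-\tau}F\|_{L^2}$ is unjustified as stated. To obtain gradient control one needs either a shifted Carleman estimate (with a semiclassical $H^{-1}$-type norm on the right-hand side) together with duality between the corresponding spaces, or a separate regularity argument for the particular solution constructed; this is exactly the point that the paper's proof delegates to \cite{BU} and \cite[(4.3)]{KSU}. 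You correctly identify this passage as the heart of the matter, but you assert it rather than prove it.
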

\begin{proof}
In order to construct $u$ satisfying the above properties, we use the computations from subsections \ref{ssec:ccgo1} and \ref{ssec:ccgo2} and show that there exists $\tilde{r}_{s, q}$ solving the equation
\begin{equation*}
|x -x_0|^{\re s} (-\Delta + q) (|x - x_0|^{-\re s} \tilde{r}_{s,q})
= |x -x_0|^{\re s} (\Delta - q) (|x-x_0|^{-s-\frac{n - 2}{2}}v_s).
\end{equation*}
Note that considering $r_{s,q}(x) = |x - x_0|^{i\im s} \tilde{r}_{s,q}(x)$ we have $u$ as in the statement. The existence of $\tilde{r}_{s, q}$ solving this equation and satisfying the corresponding bounds follows from the method%
\footnote{The method of a priori estimates for solving linear equations consists in using a priori estimates, the Hahn-Banach theorem (or simpler extension theorems for a functional) and a Riesz representation theorem.}
of a priori estimates for the adjoint of $|x -x_0|^{\re s} (-\Delta + q) |x - x_0|^{-\re s}$ see \cite{BU} for a similar case. In this case, the required a priori estimate is a Carleman estimate which was stated in \cite[(4.3)]{KSU}.
\end{proof}


%
%
\section{Stability estimates}

In this section, we take $q_1,q_2 \in \mathcal{Q}(M,\sigma)$.

\subsection{Harmonic preliminaries}

A compactly supported function $f$ has an analytic Fourier transform, therefore if $\widehat{f}$ vanishes on a compact interval, the function $f$ itself vanishes identically. We will need a quantitative version of this statement. This is the object of the following lemma. 
\begin{lem}
\label{CGO:HarmLem}
     Let $H$ be a Hilbert space and $\sigma \in (0,1]$,  there exists a constant $C_{\sigma}>0$ such that for all $K > 0$, all $0 < \lambda_0 \leq 1$ and all functions $f \in L^{1}_{\rm comp}(\R;H) \cap H^{\sigma}(\R;H)$ with values in $H$ such that
         $$ \|f\|_{L^{1}}+\|f\|_{H^{\sigma}} \leq K, $$
     we have 
     \begin{align*}
          \|f\|_{L^2(\R;H)} \leq C_{\sigma} \max(1,K)^2 \, \e^{2L\lambda_0} \lambda_0^{-\frac{1}{2}-2\sigma} \,  \bigg|\log \sup_{|\lambda| \leq \lambda_0}  \|\widehat{f}(\lambda)\|_H \,\bigg|^{-\frac{\sigma}{3+2\sigma}}  
     \end{align*}
     where  $\supp f \subset [-L,L]$.
\end{lem}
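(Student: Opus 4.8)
The plan is to use the analyticity of $\widehat f$ (coming from the compact support of $f$) to propagate the smallness of $\widehat f$ on $\{|\lambda|\le\lambda_0\}$ to a long interval of the real line, and then to conclude by Parseval together with the Sobolev decay at high frequencies. Write $\eps_0=\sup_{|\lambda|\le\lambda_0}\|\widehat f(\lambda)\|_H$. Since the right-hand side blows up as $\eps_0\to 1$, I may assume $\eps_0$ is as small as needed, the remaining range being covered by the trivial estimate $\|f\|_{L^2}\le\|f\|_{H^\sigma}\le K$. First I record the Paley--Wiener information: since $\supp f\subset[-L,L]$ and $\|f\|_{L^1}\le K$, the transform $\widehat f$ extends to an entire $H$-valued function with $\|\widehat f(\lambda)\|_H\le K\,\e^{L|\im\lambda|}$ on $\C$, hence $\le K$ on $\R$. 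Moreover $\lambda\mapsto\log\|\widehat f(\lambda)\|_H$ is subharmonic, being the supremum of the subharmonic functions $\log|\langle\widehat f(\lambda),e\rangle_H|$ over unit vectors $e\in H$, which is what makes the two-constants machinery available in the vector-valued setting.

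The core step is a quantitative analytic continuation. Applying the two-constants theorem (equivalently, the subharmonicity of $\log\|\widehat f\|_H$) on a disk $\{|\lambda|\le\rho\}$, with the small disk $\{|\lambda|\le\lambda_0\}$ carrying the bound $\eps_0$ and the outer circle carrying the Paley--Wiener bound, I would obtain for real $|\lambda|\le R$ an estimate of the shape
\[ \|\widehat f(\lambda)\|_H\le \big(K\,\e^{L\rho}\big)^{1-\theta}\,\eps_0^{\theta}, \]
where $\theta=\theta(\lambda_0,R,\rho)\in(0,1)$ is the harmonic-measure weight of the small disk seen from distance $R$; for concentric disks this weight is essentially $\log(\rho/R)/\log(\rho/\lambda_0)$. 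Then I split by Parseval, $\|f\|_{L^2}^2=\tfrac1{2\pi}\int_\R\|\widehat f\|_H^2$, into $|\lambda|\le R$ and $|\lambda|>R$: on the first range the display above gives a bound $\lesssim R\,(K\e^{L\rho})^{2(1-\theta)}\eps_0^{2\theta}$, while on the second range $\|f\|_{H^\sigma}\le K$ yields $\lesssim R^{-2\sigma}K^2$. Optimizing the free parameters $R$ and $\rho$ to balance the propagated-smallness term against the Sobolev tail turns $\eps_0$ into a negative power of $|\log\eps_0|$; tracking the $\lambda_0$-dependence carried by $\theta$ and by the prefactors then yields the exponent $\sigma/(3+2\sigma)$ and a constant of the form $C_\sigma\max(1,K)^2\,\e^{2L\lambda_0}\lambda_0^{-1/2-2\sigma}$.

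The main obstacle is precisely this optimization, and in particular keeping the complex excursion cheap. There is a structural tension: a genuinely logarithmic (power of $|\log\eps_0|$) rate requires a harmonic-measure weight $\theta$ that is only polynomially small, which ordinarily forces $\rho\sim R\to\infty$ and hence a costly factor $\e^{L\rho}$, whereas obtaining only the cheap factor $\e^{2L\lambda_0}$ announced in the statement — at the price of the lossy, non-optimal exponent $\sigma/(3+2\sigma)$ — is the delicate quantitative point. It is here that the precise geometry of the domain used for the two-constants estimate, and the exact dependence of $\theta$ on $\lambda_0$ and $R$, must be pinned down in order to reproduce the stated prefactor. By comparison, the vector-valued bookkeeping (subharmonicity of $\log\|\widehat f\|_H$) and the reduction to the case of small $\eps_0$ are routine.
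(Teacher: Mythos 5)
You have correctly identified the overall strategy --- analyticity of $\widehat f$ from compact support, subharmonicity of $\log\|\widehat f\|_H$, propagation of smallness from $[-\lambda_0,\lambda_0]$, then Plancherel with a frequency splitting against the $H^\sigma$ bound --- and this is indeed the skeleton of the paper's argument. But your proposal stops precisely where the lemma lives: you yourself write that the optimization and the exact dependence of $\theta$ on the geometry ``must be pinned down'', and you never pin them down, so what you have is a plan, not a proof. Worse, the mechanism you do commit to pulls against the stated constant. (i) The hypothesis bounds $\widehat f$ only on the \emph{real} segment $[-\lambda_0,\lambda_0]$, not on the disk $\{|z|\le\lambda_0\}$, so the concentric-circles weight $\log(\rho/R)/\log(\rho/\lambda_0)$ you quote is not the relevant harmonic measure; one would need the harmonic measure of a slit. (ii) In your scheme the tail term forces $R$, hence $\rho>R$, to grow like a power of $|\log\eps_0|$, so the Paley--Wiener factor $\e^{L\rho}$ blows up with $|\log\eps_0|$, and beating it with $\eps_0^{\theta}$, $\theta\sim 1/\log(\rho/\lambda_0)$, imposes smallness thresholds on $\eps_0$ depending on $L$ and $\lambda_0$, whereas your opening reduction only licenses discarding $\eps_0$ above a threshold depending on $\sigma$ alone. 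Reconciling this with the uniform prefactor $\e^{2L\lambda_0}\lambda_0^{-1/2-2\sigma}$ is exactly the content of the lemma, and nothing in the proposal does it. Finally, your announced exponent $\sigma/(3+2\sigma)$ is not the output of any computation you perform; for what it is worth, the paper's own proof produces $\sigma/(3+3\sigma)$ (the exponent that actually feeds into Theorem \ref{Intro:MainThm}), so the exponent cannot simply be matched by fiat.

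The device that resolves your ``structural tension'', and which is missing from the proposal, is to work on the upper half-plane and to pay the complex excursion only at height $\lambda_0$, never at a large radius. After rescaling to $\lambda_0=1$ and normalizing $K$, one observes that for every unit vector $v$ the function $\log|\langle\widehat f(\lambda+i\mu),v\rangle_H|-\mu L$ is subharmonic and $\le 0$ on $\overline{\H}$: subtracting the \emph{linear} term $\mu L$ absorbs the Paley--Wiener growth globally, so no outer circle with its $\e^{L\rho}$ price is ever introduced. Comparing with $(\log M)\phi+\eps$ on large semi-disks, where $M=\sup_{|\lambda|\le1}\|\widehat f(\lambda)\|_H$ and $\phi$ is the bounded harmonic extension of $\mathbf{1}_{[-1,1]}$ to $\H$ (which tends to $0$ at infinity), and letting the radius tend to infinity, gives $\|\widehat f(\lambda+i)\|_H\le\e^{L}\,M^{\phi(\lambda+i)}$ with $\phi(\lambda+i)=\frac{1}{\pi}\arctan(2/\lambda^2)$. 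The point is that the excursion costs only $\e^{L}$ (i.e.\ $\e^{L\lambda_0}$ before rescaling), while the harmonic-measure exponent decays only \emph{polynomially}, like $\lambda^{-2}$, along that horizontal line. One then applies Plancherel on the line $\im z=1$, namely $\|f\|_{L^2}^2\le\frac{\e^{2L}}{2\pi}\int_\R\|\widehat f(\lambda+i)\|_H^2\,\dd\lambda$, bounds the low frequencies $|\lambda|\le\Lambda_0$ using $t\e^{-t}\le1$ (so that $M^{2\phi}\le\big(\tfrac{2}{\pi}\phi\,|\log M|\big)^{-1}$, contributing $\lesssim\Lambda_0^{3+\sigma}|\log M|^{-1}$), bounds the high frequencies by $\Lambda_0^{-2\sigma}\|\e^{\ell}f\|_{H^\sigma}^2\lesssim\Lambda_0^{-2\sigma}\e^{2L}$, and balances the two with $\Lambda_0=|\log M|^{1/(3+3\sigma)}$. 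Every step here is elementary, and the stated prefactor comes out automatically; I suggest you redo the argument along these lines rather than trying to salvage the disk geometry.
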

\begin{rem}
     Note that if $K \geq 1$ one has
     \begin{align*}
          \big|\log \big(M/K^2\big)\big|^{-1} \leq |\log M|^{-1} \quad \text{ when } M \leq K 
     \end{align*}
     and one can change $f$ into $f/\max(1,K)^2$ to reduce to the case $K \leq 1$.
\end{rem}
\begin{proof} 
      After scaling and after changing $f$ into $f/\max(1,K)^2$ we may suppose $\lambda_0=1$ and $K=1$. We denote
          $$ M = \sup_{|\lambda| \leq 1} \|\widehat{f}(\lambda)\|_H \leq  1 $$
          and let $\phi$ be the harmonic function on the upper half-plane $\H$ which decays at infinity and whose restriction to the real line is the characteristic function of the interval $[-1,1]$
     \begin{align*}
          \phi(\lambda +i\mu) &= \frac{1}{\pi}  \int_{-1}^1 \frac{\mu \, \dd m}{(m-\lambda)^2+\mu^2} \\ &= \frac{1}{\pi}\bigg(\arctan\bigg(\frac{1-\lambda}{\mu}\bigg)+\arctan\bigg(\frac{1+\lambda}{\mu}\bigg)\bigg) 
     \end{align*}
     One can further simplify the expression of $\phi$ 
      \begin{align*}
          \phi(\lambda +i\mu) =
          \begin{cases}   \displaystyle \frac{1}{\pi} \arctan \bigg(\frac{2\mu}{\mu^2+\lambda^2-1}\bigg) & \text{ if } \mu^2+\lambda^2>1 \\  \displaystyle 1+\frac{1}{\pi} \arctan \bigg(\frac{2\mu}{\mu^2+\lambda^2-1}\bigg) & \text{ if } \mu^2+\lambda^2<1.
          \end{cases}
     \end{align*}
     Note that $\phi$ has values in the interval $[0,1]$ and that on the line $\mu=1$ we may give a simple expression of $\phi$
      \begin{align*}
          \phi(\lambda +i) &= \frac{1}{\pi} \arctan \bigg(\frac{2}{\lambda^2}\bigg).
     \end{align*} 
      Besides, the Fourier transform $\widehat{f}$ satisfies the bound
     \begin{align*}
           \|\widehat{f}(\lambda+i\mu)\|_H \leq \e^{\mu L} \|f\|_{L^1} \leq \e^{\mu L}, \quad \mu \geq 0.
     \end{align*}          
     For all unit vectors $v \in H$, the function $\log |\langle \widehat{f}(\lambda+i\mu),v\rangle_H|-\mu L$ is subharmonic, and thanks to the previous bound satisfies
        $$ \log |\langle \widehat{f}(\lambda+i\mu),v\rangle_H|-\mu L \leq 0, \quad \lambda+i\mu \in \C, \, \mu \geq 0. $$
        
     Let $\eps>0$, since $\lim_{|\lambda+i\mu|\to \infty}\phi(\lambda+i\mu)=0$ there exists $R$ large enough so that  we have
          $$ (\log M)\phi(\lambda+i\mu) + \eps \geq \frac{\eps}{2} \geq 0, \quad |\lambda+i\mu|=R. $$
    Since we also have
          $$ \log \|\widehat{f}(\lambda)\|_H \leq (\log M) \phi(\lambda) + \eps, \quad \lambda \in [-R,R] $$   
     by the maximum principle on the semi-disk $\bar{D}(0,R) \cap \bar{\H}$, we deduce
           $$ \log |\langle \widehat{f}(\lambda+i\mu),v\rangle_H|-\mu L \leq (\log M) \phi(\lambda+i\mu) + \eps,
     \quad |\lambda+i\mu| \leq R. $$
     The radius $R$ is arbitrarily large, hence the estimate is valid on the whole upper half-plane, and then one can let $\eps$ tend to zero. Finally, one gets
     \begin{align*}
          |\langle \widehat{f}(\lambda+i\mu),v\rangle_H| \leq  \e^{\mu L} M^{\phi(\lambda+i\mu)}, \quad
           \lambda+i\mu \in \bar{\H}
     \end{align*}
     for all unit vectors $v\in H$, and in particular if $\mu=1$
       \begin{align*}
          \|\widehat{f}(\lambda+i)\|_H = \sup_{v \in H,\|v\|_H=1} |\langle \widehat{f}
          (\lambda+i),v\rangle_H| \leq  \e^{L} M^{\frac{1}{\pi}\arctan\frac{2}{\lambda^2}},
          \quad \lambda \in \R.
     \end{align*}
     
     Thus, for $\Lambda_0 \geq 1$ we obtain
     \begin{align*}
          \int_{-\Lambda_0}^{\Lambda_0}  \|\widehat{f}(\lambda+i)\|_H^2 \, \dd \lambda \leq 2\e^{2L} \int_{0}^{\Lambda_0} \e^{\frac{2}{\pi} (\log M) \arctan\frac{2}{\lambda^2}}  \, \dd \lambda
     \end{align*}
     and using the fact that $\log M \leq 0$ and that $t\e^{-t} \leq 1$ for $t \geq 0$
    \begin{align*}
          \int_{-\Lambda_0}^{\Lambda_0}  \|\widehat{f}(\lambda+i)\|_H^2 \, \dd \lambda &\leq  \pi \e^{2L} \bigg(\int_{0}^{\Lambda_0} \frac{1}{\arctan\frac{2}{\lambda^2}}  \, \dd \lambda\bigg) \, |\log M|^{-1} \\ &\leq \pi \e^{2L} \bigg(\underbrace{\int_{0}^{\infty} \frac{ \dd \lambda}{(1+\lambda)^{3+\sigma} \arctan\frac{2}{\lambda^2}}}_{<\infty}  \bigg)  \, (2\Lambda_0)^{3+\sigma} |\log M|^{-1}.
     \end{align*}     
     For the high frequency part of the integral, we have 
     \begin{align*}
              \int_{|\lambda| \geq \Lambda_0}  \|\widehat{f}(\lambda+i)\|_H^2 \, \dd \lambda 
              \leq \Lambda_0^{-2\sigma}  \int_{-\infty}^{\infty}  \lambda^{2\sigma}\|\widehat{f}(\lambda+i)\|_H^2 \, \dd \lambda \leq \Lambda_0^{-2\sigma} \|\e^{\ell}f\|^2_{H^{\sigma}}
     \end{align*}
     and by interpolation
           $$  \|\e^{\ell}f\|_{H^{\sigma}} \leq 2^{\sigma} \e^L \|f\|_{H^{\sigma}} \leq 2^{\sigma} \e^L. $$
     This finally provides the following bound
     \begin{align*}
          \int_{-\infty}^{\infty}  \|\widehat{f}(\lambda+i)\|_H^2 \, \dd \lambda \leq C^2_{\sigma} \e^{2L} \Big(\Lambda_0^{3+\sigma} \, |\log M|^{-1}  +  \Lambda_0^{-2\sigma}  \Big)
     \end{align*}
     and taking the optimal value $\Lambda_0=|\log M|^{\frac{1}{3+3\sigma}} \geq 1$ gives
     \begin{align*}
          \int_{-\infty}^{\infty}  \|\widehat{f}(\lambda+i)\|_H^2 \, \dd \lambda \leq 2C^2_{\sigma} \e^{2L}  |\log M|^{-\frac{2\sigma}{3+3\sigma}}.
     \end{align*}
     By Plancherel 
     \begin{align*}
          \|f\|_{L^2}^2 \leq \e^{2L} \int_{-\infty}^{\infty}  \e^{2\ell} \|f(\ell)\|_H^2 \, \dd \ell = 
          \frac{\e^{2L}}{2\pi} \int_{-\infty}^{\infty}  \|\widehat{f}(\lambda+i)\|_H^2 \, \dd \lambda 
     \end{align*}
     we deduce the desired bound $\|f\|_{L^2} \leq \e^{2L} C_{\sigma}/\sqrt{\pi} |\log M|^{-\sigma/(3+3\sigma)}$. This completes the proof of the lemma.
\end{proof}

\subsection{Controlling the geodesic ray transform}

Let $u_1,u_2$ be two solutions of the Schr\"odinger equations $(-\Delta+q_j)u_j=0$, $j=1,2$, we start from the formula 
\begin{align*}
     \int_{\Omega} (q_1-q_2) u_1 u_2 \, \dd x = \int_{\d \Omega} (\Lambda_{q_1}-\Lambda_{q_2})u_1 u_2 \, \dd S
\end{align*}
and denote   
	$$  q = q_1-q_2. $$
\begin{rem}
     Note that for $0<\sigma<1/2$ we have $q \in H^{\sigma}(\R^n)$
and $\|q\|_{H^{\sigma}} \leq 2M$. 
\end{rem}
For $\delta>0$ small enough, the set
    $$ F_{2\delta}(x_0) = \big\{x \in \d\Omega : \langle x-x_0,\nu \rangle \leq {2}\delta |x-x_0|^2 \big\} \supset F(x_0) $$
is contained in $\tilde{F}$, we consider a cuttoff function $\chi \in C^{\infty}(\d\Omega)$ supported in $F_{2\delta}(x_0)$ with values in the interval $[0,1]$, which equals one 
on $F_{\delta}(x_0)$, and decompose the difference of Dirichlet-to-Neumann maps in the following way
     $$  \Lambda_{q_1}-\Lambda_{q_2} = \chi (\Lambda_{q_1}-\Lambda_{q_2}) + (1-\chi)
       (\Lambda_{q_1}-\Lambda_{q_2}). $$
This provides the following estimate
\begin{align*}
     \bigg|\int q u_1 u_2 \, \dd x\bigg| &\leq \|\Lambda^{\sharp}_{q_1}-\Lambda^{\sharp}_{q_2}\| \,\|u_1\|_{H^{\frac{1}{2}}(\tilde{B})} \|\chi u_2\|_{H^{\frac{1}{2}}(\tilde{F})} \\ &\quad + \bigg|\int_{\d\Omega \setminus F_{\delta}(x_0)} (1-\chi) (\Lambda_{q_1}-\Lambda_{q_2})u_1 u_2 \, \dd S\bigg|.
\end{align*}
The second right-hand side term may be bounded by
\begin{multline*}
      \bigg|\int_{\d\Omega \setminus F_{\delta}(x_0)} (1-\chi)(\Lambda_{q_1}-\Lambda_{q_2})u_1 u_2 \, \dd S\bigg| \\ \leq 
\big\||x-x_0|^{-\tau} (\Lambda_{q_1}-\Lambda_{q_2})u_1\big\|_{L^2(\d\Omega \setminus F_{\delta}(x_0))} \, \big\||x-x_0|^{\tau} u_2\big\|_{L^2(\d \Omega)}       
\end{multline*}
and using Corollary \ref{CGO:CarlDNClry}, we get
\begin{align*}
     \bigg|\int q u_1 u_2 \, \dd x\bigg| &\leq \big\|\Lambda^{\sharp}_{q_1}-\Lambda^{\sharp}_{q_2}\big\| \,\|u_1\|_{H^{\frac{1}{2}}(\tilde{B})} \|\chi u_2\|_{H^{\frac{1}{2}}(\tilde{F})} \\ &\quad + {C_1 \||x-x_0|^{-\tau}(\Lambda_{q_1}-\Lambda_{q_2})u_1\|_{L^2(F(x_0))}} \big\||x-x_0|^{\tau} u_2\big\|_{L^2(\d \Omega)} \\ &\quad + C_1\tau^{-\frac{1}{2}}  \big\||x-x_0|^{-\tau}(q_1-q_2) u_1\big\|_{L^2({\Omega})}  \big\||x-x_0|^{\tau} u_2\big\|_{L^2(\d \Omega)}.
\end{align*}
{
Note that $(\Lambda_{q_1} - \Lambda_{q_2})u_1 = \partial_\nu u_1|_{\partial \Omega} - \partial_\nu v_2|_{\partial \Omega}$ with $v_2$ solving $(-\Delta + q_2) v_2 = 0$ and $v_2|_{\partial \Omega} = u_1|_{\partial \Omega}$. Therefore, $\partial_\nu (u_1 - v_2)|_{\partial \Omega} \in H^\frac{1}{2}(\partial \Omega)$ and
\[\| \partial_\nu (u_1 - v_2)|_{\partial \Omega} \|_{H^\frac{1}{2}(\partial \Omega)} \leq \tilde{C}_1 \big( \| u_1 - v_2 \|_{L^2(\Omega)} + \| \Delta (u_1 - v_2) \|_{L^2(\Omega)} \big).\]
This is consequence of the following lemma which was stated and proven, for example, in \cite[Lemma 1.1]{BU}:
\begin{lem} Let $\Omega$ denote a bounded open subset with boundary $\partial \Omega$ of class $C^2$. Let $u \in L^2(\Omega)$ such that $\Delta u \in L^2(\Omega)$. If $u|_{\partial \Omega} \in H^\frac{3}{2}(\partial \Omega)$, then $u \in H^2(\Omega)$ and $\partial_\nu u|_{\partial \Omega} \in H^\frac{1}{2} (\partial \Omega)$. Moreover,
\[\| \partial_\nu u \|_{H^\frac{1}{2}(\partial \Omega)} \leq C \big( \| u \|_{L^2(\Omega)} + \| \Delta u \|_{L^2(\Omega)} + \| u \|_{H^\frac{3}{2}(\partial \Omega)} \big).\]
\end{lem}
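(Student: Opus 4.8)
The plan is to reduce the problem to the homogeneous Dirichlet problem, for which $H^2$-regularity up to a $C^2$ boundary is classical, and then to read off the Neumann trace from the trace theorem. First I would lift the boundary data: since the Dirichlet trace $H^2(\Omega) \to H^{3/2}(\partial\Omega)$ is surjective with a bounded right inverse, I choose $w \in H^2(\Omega)$ with $w|_{\partial\Omega} = u|_{\partial\Omega}$ and $\|w\|_{H^2(\Omega)} \le C\|u\|_{H^{3/2}(\partial\Omega)}$, and set $v = u - w$. Then $v \in L^2(\Omega)$, $\Delta v = \Delta u - \Delta w \in L^2(\Omega)$, and $v$ has vanishing Dirichlet trace. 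Here one must keep track of the sense in which the trace vanishes: for a function with only $v \in L^2(\Omega)$ and $\Delta v \in L^2(\Omega)$, the Dirichlet trace is \emph{a priori} defined merely in $H^{-1/2}(\partial\Omega)$ through the generalized Green formula, and it is in this weak sense that $v|_{\partial\Omega} = 0$.

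Next I would identify $v$ with the variational solution. Let $\tilde v \in H^1_0(\Omega)$ be the unique weak solution of $\Delta \tilde v = \Delta v$ furnished by Lax--Milgram. By the standard $H^2$-regularity theorem for the Dirichlet Laplacian on a domain whose boundary is of class $C^2$ (in fact $C^{1,1}$ suffices), one has $\tilde v \in H^2(\Omega)$ together with $\|\tilde v\|_{H^2(\Omega)} \le C\big(\|\Delta v\|_{L^2(\Omega)} + \|v\|_{L^2(\Omega)}\big)$. The difference $h = v - \tilde v$ then satisfies $\Delta h = 0$ and has vanishing $H^{-1/2}$ Dirichlet trace. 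To conclude $h = 0$, I would argue by duality: given $\psi \in \D(\Omega)$, solve $\Delta \phi = \psi$ with $\phi \in H^2(\Omega) \cap H^1_0(\Omega)$ and apply the generalized Green formula to the pair $(h,\phi)$; since $\Delta h = 0$, $\phi|_{\partial\Omega} = 0$, and the Dirichlet trace of $h$ vanishes when tested against $\partial_\nu \phi \in H^{1/2}(\partial\Omega)$, every boundary term drops out and $\int_\Omega h\,\psi\,\dd x = 0$. As $\psi$ is arbitrary, $h = 0$, so $v = \tilde v \in H^2(\Omega)$.

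It then remains to reassemble the estimate: $u = v + w \in H^2(\Omega)$, and bounding $\|v\|_{L^2(\Omega)}$ and $\|\Delta v\|_{L^2(\Omega)}$ by $\|u\|_{L^2(\Omega)} + \|\Delta u\|_{L^2(\Omega)} + \|u\|_{H^{3/2}(\partial\Omega)}$ (using $\|w\|_{H^2(\Omega)} \le C\|u\|_{H^{3/2}(\partial\Omega)}$) yields
\[\|u\|_{H^2(\Omega)} \le C\big(\|u\|_{L^2(\Omega)} + \|\Delta u\|_{L^2(\Omega)} + \|u\|_{H^{3/2}(\partial\Omega)}\big).\]
Finally the normal-derivative trace $\partial_\nu : H^2(\Omega) \to H^{1/2}(\partial\Omega)$ is bounded, so $\partial_\nu u|_{\partial\Omega} \in H^{1/2}(\partial\Omega)$ with $\|\partial_\nu u\|_{H^{1/2}(\partial\Omega)} \le C\|u\|_{H^2(\Omega)}$, which is exactly the claimed inequality.

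The step I expect to be the main obstacle is the identification $v = \tilde v$ together with the $H^2$ \emph{a priori} estimate: this is precisely what upgrades the weak information ($\Delta v \in L^2(\Omega)$ plus a trace vanishing only in $H^{-1/2}(\partial\Omega)$) into genuine $H^2(\Omega)$ regularity. Everything hinges on the elliptic regularity estimate for the Dirichlet Laplacian being available up to a boundary of class merely $C^2$, and on the duality argument securing uniqueness; the remaining bookkeeping of constants is routine.
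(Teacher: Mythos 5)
Your proof is correct. Note first that the paper does not actually prove this lemma: it is quoted verbatim from Bukhgeim and Uhlmann \cite[Lemma 1.1]{BU}, so there is no in-paper argument to measure yours against; what you have written is essentially the standard proof from the literature. Its structure — lift the Dirichlet datum to some $w \in H^2(\Omega)$ by surjectivity of the trace map, solve the homogeneous Dirichlet problem for the remaining part with $H^2$ regularity up to a $C^{2}$ (or $C^{1,1}$) boundary, and identify the weak and variational solutions by a uniqueness argument — is the right one, and the two genuinely delicate points are exactly those you isolate. First, for $u \in L^2(\Omega)$ with $\Delta u \in L^2(\Omega)$ the Dirichlet and Neumann traces are only defined a priori in $H^{-1/2}(\partial \Omega)$ and $H^{-3/2}(\partial \Omega)$ via the generalized Green formula (this rests on density of $\mathcal{C}^\infty(\overline{\Omega})$ in the graph-norm space, valid for $C^2$ boundaries), and the hypothesis $u|_{\partial \Omega} \in H^{3/2}(\partial \Omega)$ must be read as saying that this weak trace happens to lie in $H^{3/2}$; you interpret it this way, and you correctly track that $v = u - w$ has vanishing trace only in the $H^{-1/2}$ sense. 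Second, the uniqueness step for the harmonic difference $h$ with vanishing weak trace cannot be dispatched by energy methods (since $h$ is only in $L^2$), and your transposition argument — solving $\Delta \phi = \psi$ with $\phi \in H^2(\Omega) \cap H^1_0(\Omega)$ and letting both boundary pairings in the generalized Green formula vanish — is the standard and correct way to close it. The reassembly of the estimate and the boundedness of $\partial_\nu : H^2(\Omega) \to H^{1/2}(\partial \Omega)$ are routine, as you say. One cosmetic remark: for the Dirichlet problem with zero boundary data the regularity estimate can be stated as $\|\tilde v\|_{H^2(\Omega)} \leq C \|\Delta v\|_{L^2(\Omega)}$, Poincar\'e's inequality absorbing the lower-order term; the weaker form you wrote is also true and suffices.
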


Moreover, since $(-\Delta + q_2)(u_1 - v_2) = (q_2 - q_1)u_1$ we have that
\[\| \partial_\nu (u_1 - v_2)|_{\partial \Omega} \|_{H^\frac{1}{2}(\partial \Omega)} \leq C'_1 \| u_1 \|_{L^2(\Omega)}.\]
Thus, we can get the following bound
\begin{align*}
	\||x-x_0|^{-\tau}(\Lambda_{q_1}&-\Lambda_{q_2})u_1\|_{L^2(F(x_0))} \\ 
	&\leq \varepsilon^{-\tau} 	\big( \int_{\partial \Omega} \chi (\Lambda_{q_1} - \Lambda_{q_2})u_1 \overline{\partial_\nu (u_1 - v_2)} \dd S \big)^\frac{1}{2}\\
	&\leq (C'_1)^\frac{1}{2} \varepsilon^{-\tau} \big\|\Lambda^{\sharp}_{q_1}-\Lambda^{\sharp}_{q_2}\big\|^\frac{1}{2} \| u_1 \|^\frac{1}{2}_{H^\frac{1}{2}(\partial \Omega)} \| u_1 \|^\frac{1}{2}_{L^2(\Omega)}.
\end{align*}
}
We now choose $u_1,u_2$ to be the solutions constructed in section \ref{sec:CGO}    
\begin{align*}
     u_1{(x)} &= |x-x_0|^{s}\big({ |x - x_0|^{-\frac{n - 2}{2}}}v_s^{(1)}{ (x)}+r_{s,q_1}(x)\big), \\
     u_2{ (x)} &= |x-x_0|^{-\bar{s}}\big({ |x - x_0|^{-\frac{n - 2}{2}}}v^{(2)}_{-\bar{s}}{ (x)}+r_{-\bar{s},q_2}(x)\big)
\end{align*}
with $s=\tau+i\lambda \in \H$ with $\tau \geq \max(1,\tau_0)$. This gives
\begin{align*}
     \bigg|\int q u_1 u_2 \, \dd x\bigg| &\leq C_2 \Big({\varrho_0}^{\tau} \eps^{-\tau} \big\|\Lambda^{\sharp}_{q_1}-\Lambda^{\sharp}_{q_2}\big\|^{\frac{1}{2}}   
       +\tau^{-\frac{1}{2}}\Big) \|b\|_{{H}^2(S_y(S^{n-{1}}_+))}. 
\end{align*}
the left-hand side term of which reads 
\begin{multline}
\label{Stab:MainTerm}
     \int q u_1 u_2 \, \dd x = \int |x-x_0|^{2i\lambda}  q v_s^{(1)}v^{(2)}_{-\bar{s}} { |x - x_0|^{-n+2}}\, \dd x \\ + \int |x-x_0|^{2i\lambda}  q \big({ |x - x_0|^{-\frac{n - 2}{2}}} v_s^{(1)}r_{-\bar{s},q_2} + { |x - x_0|^{-\frac{n - 2}{2}}} v^{(2)}_{-\bar{s}}r_{s,q_1} + r_{s,q_1}r_{-\bar{s},q_2}\big) \, \dd x.
\end{multline}
The first term in the former equality equals
\begin{align*}
     \int_{-\infty}^{\infty}\int_{S^{n-1}_+} \e^{2i\lambda t} \e^{2t} q(x_0+\e^t \omega) v_s^{(1)}v^{(2)}_{-\bar{s}} \, \dd \omega \, \dd t = \int_{S^{n-1}_+} Q(\lambda,\omega) v_s^{(1)}v^{(2)}_{-\bar{s}} \, \dd \omega 
\end{align*}
where $Q$ is {(almost)} the Fourier transform 
\begin{align*}
    Q(\lambda,\omega) = \int_{-\infty}^{\infty} \e^{i 2\lambda t} \e^{2t} q(x_0+\e^t \omega) \, \dd t.
\end{align*}
Note that because of the discussion carried in the beginning of section \ref{sec:CGO} this function is supported in a cap of the hypersphere strictly smaller than the hemisphere
     $$ \supp Q(\lambda,\cdot) \subset \Gamma \subset S^{n-1}_{>\alpha_0}. $$
The second term in \eqref{Stab:MainTerm} may be bounded by  
\begin{align*}
     2M \big( \|v_s^{(1)}\|_{L^2(\Omega)} \|r_{-\bar{s},q_2}\|_{L^2(\Omega)} + \|v^{(2)}_{-\bar{s}}\|_{L^2(\Omega)} \|r_{s,q_1}\|_{L^2(\Omega)} \\ + \|r_{s,q_1}\|_{L^2(\Omega)} \|r_{-\bar{s},q_2}\|_{L^2(\Omega)}\big) \leq C_3 \tau^{-1} \|b\|_{{H}^2(S_y(S^{n-{1}}_+))}.
\end{align*}
\begin{rem}
     Note that the difference with respect to the Kenig, Sj\"ostrand and Uhlmann approach \cite{KSU} used in \cite{CDSFR} is that $x_0$ is \emph{fixed}.
\end{rem}
We finally get
\begin{multline}
\label{Stab:FirstEst}
     \bigg|\int_{S^{n-1}_+} Q(\lambda,\omega) v_s^{(1)}v^{(2)}_{-\bar{s}} \, \dd \omega\bigg| \\ \leq  C_4 \Big({\varrho_0}^{\tau} \eps^{-\tau} \big\|\Lambda^{\sharp}_{q_1}-\Lambda^{\sharp}_{q_2}\big\|^{\frac{1}{2}}   
       +\tau^{-\frac{1}{2}}\Big) \|b\|_{{H}^2(S_y(S^{n-{1}}_+))}. 
\end{multline}
Starting from the formula
\begin{align*}
     \int_{\Omega} (q_1-q_2) u_1 u_2 \, \dd x = \int_{\d \Omega} u_1(\Lambda_{q_1}-\Lambda_{q_2})u_2 \, \dd S
\end{align*}
and choosing the solutions to be 
\begin{align*}
     u_1{ (x)} &= |x-x_0|^{-s}\big({ |x - x_0|^{-\frac{n - 2}{2}}}v_s^{(1)}{ (x)}+r_{s,q_1}(x)\big), \\
     u_2{ (x)} &= |x-x_0|^{\bar{s}}\big({ |x - x_0|^{-\frac{n - 2}{2}}}v^{(2)}_{-\bar{s}}{ (x)}+r_{-\bar{s},q_2}(x)\big)
\end{align*}
with $s=\tau+i\lambda \in \H$, we get similar estimates as \eqref{Stab:FirstEst} on 
\begin{align*}
      \int_{S^{n-1}_+} Q(-\lambda,\omega) v_s^{(1)}v^{(2)}_{-\bar{s}} \, \dd \omega.
\end{align*}
With the choice
\begin{align*}
     v_s^{(1)}&= \e^{i s\theta} (\sin \theta)^{-\frac{n-2}{2}} b(\eta)  \\
     v^{(2)}_{-\bar{s}}&= \e^{-i \bar{s}\theta} (\sin \theta)^{-\frac{n-2}{2}}, \qquad \omega = \exp_{y}(\theta \eta)
\end{align*}
the left-hand side term in the estimate \eqref{Stab:FirstEst} relates to the attenuated geodesic ray transform on the hemisphere
\begin{align*}
     \int_{S^{n-1}_+} Q(\lambda,\omega) v_s^{(1)}v^{(2)}_{-\bar{s}} \, \dd \omega  
     = \int_{S^{n-2}_+} \bigg(\underbrace{\int_0^{\pi}  Q(\lambda,\exp_y(\theta \eta)) \e^{-{2}\lambda \theta}  \, \dd\theta}_{=T_{{2}\lambda}(Q(\lambda,\cdot))(y,\eta) }\bigg) b(\eta) \, \dd \eta. 
\end{align*}
If $\big\|\Lambda^{\sharp}_{q_1}-\Lambda^{\sharp}_{q_2}\big\|$ is small enough, we choose 
     $$ \tau= \frac{1}{{4} (\log {\varrho_0} -\log \eps)} { \Big|} \log \big\|\Lambda^{\sharp}_{q_1}-\Lambda^{\sharp}_{q_2}\big\| { \Big|}  \geq \tau_0 $$ 
and we get the following estimate. 
\begin{prop}
     There exist two constants $\kappa$ and $C>0$ such that if $\big\|\Lambda^{\sharp}_{q_1}-\Lambda^{\sharp}_{q_2}\big\| \leq \kappa$  then
     \begin{align*}
     \bigg|\int_{S^{n-2}_+} T_{{2}\lambda}(Q(\lambda,\cdot))(y,\eta) b(\eta) \, \dd \eta\bigg| \leq C \big| \log\big\|\Lambda^{\sharp}_{q_1}-\Lambda^{\sharp}_{q_2}\big\|\big|^{-\frac{1}{2}} \, \|b\|_{H^2(S_y(S^{n-1}_+))}
\end{align*}
     for all {$y \in \partial S^{n-1}_+$} and all smooth functions $b$ on $S_y(S^{n - 1}_+)$ {which has been identified with $S^{n-{2}}_+$}.
\end{prop}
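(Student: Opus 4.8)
The plan is to obtain the estimate by directly substituting the announced value of $\tau$ into the inequality \eqref{Stab:FirstEst} and balancing the two resulting terms. Write $\delta=\big\|\Lambda^{\sharp}_{q_1}-\Lambda^{\sharp}_{q_2}\big\|$ for brevity. First I would record that, with the specialized quasimodes
$$ v_s^{(1)}= \e^{i s\theta}(\sin \theta)^{-\frac{n-2}{2}} b(\eta), \qquad v^{(2)}_{-\bar{s}}= \e^{-i \bar{s}\theta}(\sin \theta)^{-\frac{n-2}{2}}, \qquad \omega=\exp_y(\theta\eta), $$
the product simplifies via $\e^{is\theta}\e^{-i\bar s\theta}=\e^{i(s-\bar s)\theta}=\e^{-2\lambda\theta}$ (since $s-\bar s=2i\lambda$), and combined with $\dd\omega=(\sin\theta)^{n-2}\,\dd\theta\,\dd\eta$ the left-hand side of \eqref{Stab:FirstEst} collapses exactly to $\int_{S^{n-2}_+} T_{2\lambda}(Q(\lambda,\cdot))(y,\eta)\,b(\eta)\,\dd\eta$, as already computed above. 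Hence \eqref{Stab:FirstEst} takes the form
$$ \bigg|\int_{S^{n-2}_+} T_{2\lambda}(Q(\lambda,\cdot))(y,\eta)\,b(\eta)\,\dd\eta\bigg| \leq C_4\big(\varrho_0^{\tau}\eps^{-\tau}\delta^{\frac12}+\tau^{-\frac12}\big)\|b\|_{H^2(S_y(S^{n-1}_+))}. $$

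Next I would insert the choice $\tau=\frac{1}{4(\log\varrho_0-\log\eps)}|\log\delta|$ and evaluate each term. For the first term, using $\delta<1$ so that $|\log\delta|=-\log\delta$,
$$ \varrho_0^{\tau}\eps^{-\tau}=\e^{\tau(\log\varrho_0-\log\eps)}=\e^{\frac14|\log\delta|}=\delta^{-\frac14}, $$
whence $\varrho_0^{\tau}\eps^{-\tau}\delta^{1/2}=\delta^{1/4}$. For the second term,
$$ \tau^{-\frac12}=2\big(\log\varrho_0-\log\eps\big)^{\frac12}\,|\log\delta|^{-\frac12}. $$
Thus the right-hand side becomes $C_4\big(\delta^{1/4}+2(\log\varrho_0-\log\eps)^{1/2}|\log\delta|^{-1/2}\big)\|b\|_{H^2}$.

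Finally I would compare the two contributions as $\delta\to0$. Since $\delta^{1/4}|\log\delta|^{1/2}\to0$, there is a threshold $\kappa>0$ for which $\delta\leq\kappa$ forces $\delta^{1/4}\leq|\log\delta|^{-1/2}$; both terms are then controlled by $|\log\delta|^{-1/2}$, yielding the stated bound with $C=C_4\big(1+2(\log\varrho_0-\log\eps)^{1/2}\big)$. The main point requiring care is the choice of $\kappa$: it must be taken small enough to do two things at once, namely to guarantee $\tau\geq\max(1,\tau_0)$ (the range in which Corollary \ref{CGO:CarlDNClry}, hence \eqref{Stab:FirstEst}, is valid) and to ensure that the polynomially small term $\delta^{1/4}$ is absorbed by the only logarithmically small term $|\log\delta|^{-1/2}$. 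Once this joint threshold is fixed the argument is a direct substitution, with no further analytic input needed.
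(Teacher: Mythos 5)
Your proposal is correct and follows exactly the paper's own route: the paper proves this proposition precisely by substituting the stated choice of $\tau$ into \eqref{Stab:FirstEst}, so that $\varrho_0^{\tau}\eps^{-\tau}\|\Lambda^{\sharp}_{q_1}-\Lambda^{\sharp}_{q_2}\|^{1/2}$ becomes $\|\Lambda^{\sharp}_{q_1}-\Lambda^{\sharp}_{q_2}\|^{1/4}$, which is absorbed by the term $\tau^{-1/2}\sim|\log\|\Lambda^{\sharp}_{q_1}-\Lambda^{\sharp}_{q_2}\||^{-1/2}$ once $\kappa$ is small enough. Your explicit bookkeeping (including the requirement that $\kappa$ simultaneously guarantee $\tau\geq\max(1,\tau_0)$) simply fills in the details the paper leaves implicit.
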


\subsection{Stability estimates}
From the proposition, we get the norm estimate
\begin{align*}
\sup_{y\in \partial S^{n-1}_+}
     \|T_{{2}\lambda}(Q(\lambda,\cdot))\|_{H^{-2}(S^{n-{2}}_y)} \leq C_1 \big| \log\big\|\Lambda^{\sharp}_{q_1}-\Lambda^{\sharp}_{q_2}\big\|\big|^{-\frac{1}{2}}.
\end{align*}
By interpolation, we have
\begin{multline*}
      \|T_{{2}\lambda}(Q(\lambda,\cdot))\|_{L^2(\partial S^{n-1}; L^2(S^{n-{2}}_y))} \\  \leq
       \|T_{{2}\lambda}(Q(\lambda,\cdot))\|_{L^2(\partial S^{n-1}_+;H^{-2}(S^{n-{2}}_y))}^{\frac{\sigma}{\sigma+2}} \|T_{{2}\lambda}(Q(\lambda,\cdot))\|_{L^2(\partial S^{n-1}_+;H^{\sigma}(S^{n-{2}}_y))}^{\frac{2}{\sigma+2}}
\end{multline*}
and by Remark \ref{Xray:SobolevContinuityRem} we have the bound
\begin{align*}
     \|T_{{2}\lambda}(Q(\lambda,\cdot))\|_{L^2(\partial S^{n-1}_+;H^{\sigma}({ S^{n-2}_y}))} \leq C_2 \|Q(\lambda,\cdot))\|_{H^{\sigma}(S^{n-1})}.\end{align*}
Since $q\in L^2(\R; H^\sigma(S^{n-1}) )$ from the compactness of the support it follows that
$q\in L^1(\R; H^\sigma(S^{n-1}) )$ and from vector valued Riemann-Lebesgue
$$ \sup_{\lambda}\| Q(\lambda, \cdot)\|_{ H^\sigma(S^{n-1}) }\leq C_3M.$$
Hence
$$\|T_{{2}\lambda}(Q(\lambda,\cdot))\|_{L^2(E)} \leq C_4 \big| \log\big\|\Lambda^{\sharp}_{q_1}-\Lambda^{\sharp}_{q_2}\big\|\big|^{-\frac{\sigma}{2(\sigma+2)}}$$
then using the estimate on the attenuated geodesic ray transform of Theorem \ref{Xray:AttXraySpherEst} in section \ref{Xray:sec} we deduce 
\begin{align*}
     \|Q(\lambda,\cdot)\|_{H^{-\frac{1}{2}}(S^{n-1}_+)} \leq C_5 \big| \log\big\|\Lambda^{\sharp}_{q_1}-\Lambda^{\sharp}_{q_2}\big\|\big|^{-\frac{\sigma}{2(\sigma+2)}}
\end{align*}
for $|\lambda| \leq \lambda_0$ for a small enough $\lambda_0$.

Again  interpolation for  fix $\lambda<\lambda_0 $ with
$$ \sup_{\lambda}\| Q(\lambda, \cdot)\|_{ H^\sigma(S^{n-1}) }\leq C_3M$$
provides the bound
\begin{align*}
    \sup_{|\lambda| \leq \lambda_0} \|Q(\lambda,\cdot)\|_{L^2(S^{n-1}_+ )} \leq
    C_6 \big| \log\big\|\Lambda^{\sharp}_{q_1}-\Lambda^{\sharp}_{q_2}\big\|\big|^{-\frac{\sigma^2}{(2\sigma+1)(\sigma+2)}}
\end{align*}
and using the fact that
\begin{align*}
     \int_{\Omega} |q(x)|^2 |x-x_0|^{-n+4} \, \dd x &=  \int_{-\infty}^{\infty}\int_{S^{n-1}_+} |\e^{2t}q(x_0+\e^t \omega)|^2 \, \dd \omega \, \dd t
\end{align*}
together with Lemma \ref{CGO:HarmLem} with $H=L^2(S^{n-1}_+)$ as a Hilbert space, recalling that $Q(\lambda,\cdot)$ is the following Fourier transform
\begin{align*}
    Q(\lambda,\omega) = \int_{-\infty}^{\infty} \e^{i 2\lambda t} \e^{2t} q(x_0+\e^t \omega) \, \dd t
\end{align*}
we end with the estimate
\begin{multline*}
     \bigg(\int_{\Omega} |q(x)|^2 |x-x_0|^{-n+4} \, \dd x\bigg)^{\frac{1}{2}} \\ \leq  
     C_7 \bigg|\log \bigg(\frac{\sigma^2}{(2\sigma+1)(\sigma+2)} \Big|\log\big\|\Lambda^{\sharp}_{q_1}-\Lambda^{\sharp}_{q_2}\big\|\Big|\bigg) \bigg|^{-\frac{2\sigma}{3+3\sigma}}
\end{multline*}
with a constant $C_7$ depending on $\sigma,M$. If we assume $\big\|\Lambda^{\sharp}_{q_1}-\Lambda^{\sharp}_{q_2}\big\| \leq \kappa'$ with $\kappa' \leq \kappa$ small enough so that
       $$ |\log \kappa| \geq 2 \bigg|\log  \frac{\sigma^2}{(2\sigma+1)(\sigma+2)}\bigg| $$
then we get
\begin{align*}
     \bigg(\int_{\Omega} |q(x)|^2 |x-x_0|^{-n+4} \, \dd x\bigg)^{\frac{1}{2}} \leq  
     C_8 \bigg|\log \Big|\log\big\|\Lambda^{\sharp}_{q_1}-\Lambda^{\sharp}_{q_2}\big\|\Big| \bigg|^{-\frac{2\sigma}{3+3\sigma}}.
\end{align*}

This proves Theorem \ref{Intro:MainThm} when $\big\|\Lambda^{\sharp}_{q_1}-\Lambda^{\sharp}_{q_2}\big\| \leq \kappa'$ once one has observed that
\begin{align*}
   \|q\|_{L^2(\Omega)} \leq C \bigg(\int_{\Omega} |q(x)|^2 |x-x_0|^{-n+4} \, \dd x\bigg)^{\frac{1}{2}}
\end{align*}
with a constant $C$ which depends on the dimension
\begin{align*}
     C=
     \begin{cases}
          \varrho_0 & \text{ if } n=3 \\ 1 & \text{ if } n=4 \\ \eps^{-n+4} & \text{ if } n \geq 5
     \end{cases}.
\end{align*}
When $\big\|\Lambda^{\sharp}_{q_1}-\Lambda^{\sharp}_{q_2}\big\| \geq \kappa$, the bound in Theorem \ref{Intro:MainThm} is self evident. 
%
%
\section{The conductivity equation}

The key observation to relate the Schr\"odinger and the conductivity equations is the following algebraic formula
\begin{align}
     \mathop{\rm div}(\gamma \nabla u) = \sqrt{\gamma}(\Delta-q)v,
     \quad v=\sqrt{\gamma} u, \quad q=\frac{\Delta \sqrt{\gamma}}{\sqrt{\gamma}}.
\end{align}
This leads to the well known relation {\cite{SyU}} linking the Dirichlet-to-Neumann map associated to the conductivity equation with the one associated to the Schr\"odinger equation with potential $q=\Delta\sqrt{\gamma}/\sqrt{\gamma}$
\begin{align}
      \Lambda_q f = \frac{1}{\sqrt{\gamma}} \, \Lambda_\gamma  \, \frac{1}{\sqrt{\gamma}} + \frac{1}{2} \d_{\nu}(\log \gamma) \, {\rm
      id}_{\d\Omega}. \label{for:conduc-Schr_DtM}
\end{align}
{
\begin{lem} There exists a constant $C$ such that
\[\big\| \Lambda_{q_1}^\sharp - \Lambda_{q_2}^\sharp \big\| \leq C \big( \big\| \Lambda_{\gamma_1}^\sharp - \Lambda_{\gamma_2}^\sharp \big\| + \| \gamma_1 - \gamma_2 \|_{L^\infty(\partial \Omega)} + \| \nabla \gamma_1 - \nabla \gamma_2 \|_{L^\infty(\partial \Omega)} \big).
 \]
\end{lem}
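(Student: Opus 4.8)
The plan is to exploit the pointwise identity \eqref{for:conduc-Schr_DtM} relating $\Lambda_{q_j}$ and $\Lambda_{\gamma_j}$ and to reduce the estimate to operator-norm bounds for multiplication operators on the boundary. Writing $q_j = \Delta\sqrt{\gamma_j}/\sqrt{\gamma_j}$ and $\Lambda_{q_1}^\sharp - \Lambda_{q_2}^\sharp = \chi_{\tilde{F}}(\Lambda_{q_1} - \Lambda_{q_2})$, I would substitute \eqref{for:conduc-Schr_DtM} for each index and telescope the difference as
\begin{equation*}
\Lambda_{q_1} - \Lambda_{q_2}
= \frac{1}{\sqrt{\gamma_1}} (\Lambda_{\gamma_1} - \Lambda_{\gamma_2}) \frac{1}{\sqrt{\gamma_1}}
+ \Big( \frac{1}{\sqrt{\gamma_1}} \Lambda_{\gamma_2} \frac{1}{\sqrt{\gamma_1}} - \frac{1}{\sqrt{\gamma_2}} \Lambda_{\gamma_2} \frac{1}{\sqrt{\gamma_2}} \Big)
+ \frac{1}{2} \partial_\nu \log\frac{\gamma_1}{\gamma_2}.
\end{equation*}
The key point is that $\chi_{\tilde{F}}$ and the multiplication operator by $1/\sqrt{\gamma_1}$ both act as multiplications on $\partial\Omega$ and therefore commute; hence, after composing on the left with $\chi_{\tilde{F}}$, the first term becomes $\frac{1}{\sqrt{\gamma_1}}(\Lambda_{\gamma_1}^\sharp - \Lambda_{\gamma_2}^\sharp)\frac{1}{\sqrt{\gamma_1}}$ and already produces the partial-data norm $\| \Lambda_{\gamma_1}^\sharp - \Lambda_{\gamma_2}^\sharp \|$.

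I would then collect the mapping properties needed. Since each $\gamma_j$ lies in $W^{2,\infty}$, is bounded by $M$, and is bounded below by a fixed positive ellipticity constant, the traces on $\partial\Omega$ of $1/\sqrt{\gamma_j}$, of $\frac{1}{\sqrt{\gamma_1}} - \frac{1}{\sqrt{\gamma_2}}$, and of $\partial_\nu\log(\gamma_1/\gamma_2) = \partial_\nu\gamma_1/\gamma_1 - \partial_\nu\gamma_2/\gamma_2$ are controlled in $W^{1,\infty}(\partial\Omega)$ (respectively $L^\infty(\partial\Omega)$) by $\| \gamma_1 - \gamma_2 \|_{L^\infty(\partial\Omega)} + \| \nabla\gamma_1 - \nabla\gamma_2 \|_{L^\infty(\partial\Omega)}$ through the chain rule and the lower bound. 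Since multiplication by a $W^{1,\infty}$ function is bounded on $H^{\pm 1/2}(\partial\Omega)$ with norm governed by its $W^{1,\infty}$ norm, and since $\| \Lambda_{\gamma_2} \|_{H^{1/2}\to H^{-1/2}} \le C(M,\Omega)$ by the standard a priori bound for the conductivity Dirichlet-to-Neumann map of an elliptic conductivity, the first term is bounded by $C\| \Lambda_{\gamma_1}^\sharp - \Lambda_{\gamma_2}^\sharp \|$. Telescoping the second term as $(\frac{1}{\sqrt{\gamma_1}} - \frac{1}{\sqrt{\gamma_2}})\Lambda_{\gamma_2}\frac{1}{\sqrt{\gamma_1}} + \frac{1}{\sqrt{\gamma_2}}\Lambda_{\gamma_2}(\frac{1}{\sqrt{\gamma_1}} - \frac{1}{\sqrt{\gamma_2}})$ and estimating each factor bounds it by $C\| \Lambda_{\gamma_2} \| \, \| \gamma_1 - \gamma_2 \|_{W^{1,\infty}(\partial\Omega)}$, hence by the boundary quantities on the right-hand side. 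The third term is a multiplication operator, bounded $H^{1/2}\to L^2\hookrightarrow H^{-1/2}$ by its $L^\infty(\partial\Omega)$ norm, which is again controlled by the two boundary quantities. Summing the three contributions and taking operator norms yields the claim.

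The main obstacle is the control of the second term: one must justify that multiplication by the trace of $\frac{1}{\sqrt{\gamma_1}} - \frac{1}{\sqrt{\gamma_2}}$ is bounded on the negative-order space $H^{-1/2}(\partial\Omega)$ --- not merely on $L^2$ --- with norm dominated by $\| \gamma_1 - \gamma_2 \|_{L^\infty(\partial\Omega)} + \| \nabla\gamma_1 - \nabla\gamma_2 \|_{L^\infty(\partial\Omega)}$. This is exactly where the $W^{2,\infty}$ regularity of the conductivities (rather than mere boundedness) and the uniform positivity lower bound enter, together with the fact that the full map $\Lambda_{\gamma_2}$ obeys an a priori bound depending only on $M$ and $\Omega$. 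The remaining steps are routine telescoping and chain-rule estimates.
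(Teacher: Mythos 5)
Your proposal is correct and takes essentially the same route as the paper: the paper's proof likewise substitutes the identity \eqref{for:conduc-Schr_DtM} and telescopes the difference of Dirichlet-to-Neumann maps into a term with $\Lambda_{\gamma_1}-\Lambda_{\gamma_2}$ sandwiched between multiplications, terms carrying $\frac{1}{\sqrt{\gamma_1}}-\frac{1}{\sqrt{\gamma_2}}$, and the boundary term $\frac{1}{2}\big(\frac{\partial_\nu\gamma_1}{\gamma_1}-\frac{\partial_\nu\gamma_2}{\gamma_2}\big)$, differing from yours only in which index carries the sandwiching factors. The mapping properties you spell out (commutation of $\chi_{\tilde F}$ with multiplications, $W^{1,\infty}$ multiplier bounds on $H^{\pm\frac{1}{2}}(\partial\Omega)$, and the a priori bound on the full conductivity map) are precisely the steps the paper compresses into ``the estimate follows immediately.''
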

\begin{proof}
Using the formula \eqref{for:conduc-Schr_DtM} one can get the identity
\begin{align*}
	\Lambda_{q_1} - \Lambda_{q_2} =& \Big(\frac{1}{\sqrt{\gamma_1}} - \frac{1}{\sqrt{\gamma_2}}\Big) \Lambda_{\gamma_1} \frac{1}{\sqrt{\gamma_1}} + \frac{1}{\sqrt{\gamma_2}} \Lambda_{\gamma_1} \Big(\frac{1}{\sqrt{\gamma_1}} - \frac{1}{\sqrt{\gamma_2}}\Big)\\
	&+ \frac{1}{\sqrt{\gamma_2}} (\Lambda_{\gamma_1} - \Lambda_{\gamma_2}) \frac{1}{\sqrt{\gamma_2}} + \frac{1}{2} \Big( \frac{\partial_\nu \gamma_1}{\gamma_1} - \frac{\partial_\nu \gamma_2}{\gamma_2} \Big)\, {\rm
      id}_{\d\Omega}.
\end{align*}
Now the estimate stated in the lemma follows immediately.
\end{proof}
\begin{lem} There exists a constant $C$ such that
\[ \| \gamma_1 - \gamma_2 \|_{H^1(\Omega)} \leq C \big( \| q_1 - q_2 \|_{L^2(\Omega)} + \| \gamma_1 - \gamma_2 \|_{L^\infty(\partial \Omega)} + \| \nabla \gamma_1 - \nabla \gamma_2 \|_{L^\infty(\partial \Omega)} \big) \]
\end{lem}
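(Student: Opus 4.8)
The plan is to reduce the claim to an $H^1$ estimate for $w = \sqrt{\gamma_1}-\sqrt{\gamma_2}$ and then to use the well-posedness of the Dirichlet problem for the Schr\"odinger operator $-\Delta+q_1$, whose coercivity is \emph{uniform} over the class $\mathcal{G}(M,\sigma)$ precisely because of the underlying conductivity structure.

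First I would record the algebraic identity behind the reduction. Writing $a_j=\sqrt{\gamma_j}$, the definition $q_j=\Delta a_j/a_j$ reads $\Delta a_j=q_j a_j$, so that $w=a_1-a_2$ solves
\[ (-\Delta+q_1)w = (q_2-q_1)\,a_2 \quad\text{in }\Omega, \qquad w|_{\partial\Omega}=\big(\sqrt{\gamma_1}-\sqrt{\gamma_2}\big)|_{\partial\Omega}. \]
Since $\gamma_1-\gamma_2=(a_1+a_2)\,w$ and $a_1+a_2=\sqrt{\gamma_1}+\sqrt{\gamma_2}\in W^{1,\infty}(\Omega)$ with norm controlled by $M$ (the $\gamma_j$ being uniformly bounded above and below on $\bar\Omega$), the Leibniz rule gives $\|\gamma_1-\gamma_2\|_{H^1(\Omega)}\leq C\,\|w\|_{H^1(\Omega)}$. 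It therefore suffices to bound $\|w\|_{H^1(\Omega)}$.

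Next I would control the two data of this boundary-value problem. The source obeys $\|(q_2-q_1)a_2\|_{L^2(\Omega)}\leq \|a_2\|_{L^\infty}\|q_1-q_2\|_{L^2(\Omega)}\leq C\|q_1-q_2\|_{L^2(\Omega)}$. For the boundary datum I would write $\sqrt{\gamma_1}-\sqrt{\gamma_2}=(\gamma_1-\gamma_2)/(\sqrt{\gamma_1}+\sqrt{\gamma_2})$ and differentiate tangentially; using the lower bound on the $\gamma_j$ and $\|\gamma_j\|_{W^{2,\infty}}\leq M$ this yields
\[ \big\|\sqrt{\gamma_1}-\sqrt{\gamma_2}\big\|_{W^{1,\infty}(\partial\Omega)} \leq C\big(\|\gamma_1-\gamma_2\|_{L^\infty(\partial\Omega)}+\|\nabla\gamma_1-\nabla\gamma_2\|_{L^\infty(\partial\Omega)}\big), \]
and since $\partial\Omega$ is compact, $W^{1,\infty}(\partial\Omega)\hookrightarrow H^{1/2}(\partial\Omega)$, so the right-hand side also controls $\|w\|_{H^{1/2}(\partial\Omega)}$.

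Finally I would solve the problem for $w$. Choose an extension $W\in H^1(\Omega)$ of the boundary datum with $\|W\|_{H^1(\Omega)}\leq C\|w\|_{H^{1/2}(\partial\Omega)}$ and set $w=W+\tilde w$ with $\tilde w\in H^1_0(\Omega)$ solving $(-\Delta+q_1)\tilde w=(q_2-q_1)a_2-(-\Delta+q_1)W$ in $H^{-1}(\Omega)$. The crucial point, and the only place where uniformity could fail, is the $H^1$ bound for $\tilde w$. Here I would not invoke an abstract spectral argument but exploit the conductivity form: for $\tilde w\in H^1_0(\Omega)$ and $u=\tilde w/\sqrt{\gamma_1}\in H^1_0(\Omega)$, the algebraic identity $\mathrm{div}(\gamma_1\nabla u)=\sqrt{\gamma_1}\,(\Delta-q_1)\tilde w$ together with an integration by parts gives
\[ \big\langle (-\Delta+q_1)\tilde w,\tilde w\big\rangle = \int_\Omega \gamma_1\,|\nabla u|^2\,\dd x \geq c\,\|\tilde w\|_{H^1(\Omega)}^2, \]
where $c>0$ depends only on the lower bound of $\gamma_1$, on $M$, and on the Poincar\'e constant of $\Omega$. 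This uniform coercivity (which also shows in passing that $0$ is not a Dirichlet eigenvalue of $-\Delta+q_1$) makes $-\Delta+q_1\colon H^1_0(\Omega)\to H^{-1}(\Omega)$ an isomorphism whose inverse is bounded uniformly over $\mathcal{G}(M,\sigma)$, whence $\|\tilde w\|_{H^1(\Omega)}\leq C\big(\|q_1-q_2\|_{L^2(\Omega)}+\|W\|_{H^1(\Omega)}\big)$. Combining the three steps yields the asserted estimate. The main obstacle is exactly this uniformity of the constant across the admissible class, and it is resolved by the conductivity form, which furnishes coercivity with a constant controlled solely by the a priori bounds.
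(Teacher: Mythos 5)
Your proof is correct, but it takes a genuinely different route from the paper's. The paper works with the logarithmic difference $u=\log\sqrt{\gamma_1}-\log\sqrt{\gamma_2}$, which satisfies the pure divergence-form equation $\nabla\cdot(\sqrt{\gamma_1\gamma_2}\,\nabla u)=\sqrt{\gamma_1\gamma_2}\,(q_1-q_2)$; since that operator has no zeroth-order term and its coefficient is bounded below, the uniform estimate $\|u\|_{H^1(\Omega)}\leq C\big(\|q_1-q_2\|_{L^2(\Omega)}+\|u|_{\partial\Omega}\|_{H^{1/2}(\partial\Omega)}\big)$ is immediate, and the lemma follows after undoing the logarithm, $\gamma_1-\gamma_2=\gamma_2(\e^{2u}-1)$. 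You instead work with $w=\sqrt{\gamma_1}-\sqrt{\gamma_2}$, which satisfies the Schr\"odinger equation $(-\Delta+q_1)w=(q_2-q_1)\sqrt{\gamma_2}$, so the whole burden falls on the uniform coercivity of $-\Delta+q_1$ on $H^1_0(\Omega)$ --- which you resolve correctly by conjugating back to conductivity form, $\tilde w=\sqrt{\gamma_1}\,u$, and using the identity $\langle(-\Delta+q_1)\tilde w,\tilde w\rangle=\int_\Omega\gamma_1|\nabla u|^2\,\dd x$ together with Poincar\'e and Lax--Milgram. Thus both arguments exploit the same algebraic fact (the conductivity--Schr\"odinger conjugation), but at different places: the paper builds it into the choice of unknown, so that coercivity needs no argument at all, while you invoke it as a separate coercivity lemma; in exchange, your recovery step $\gamma_1-\gamma_2=(\sqrt{\gamma_1}+\sqrt{\gamma_2})\,w$ is a clean product estimate rather than an exponential one. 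Note that both proofs (and the lemma as stated) implicitly use a uniform positive lower bound on the $\gamma_j$, which is not written into the definition of $\mathcal{G}(M,\sigma)$ but is part of the standing assumption that conductivities are positive on $\overline{\Omega}$; your constant, like the paper's, depends on that bound, on $M$ and on $\Omega$.
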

\begin{proof}
In order to prove the estimate in the statement we only need to check that $u = \log \sqrt{\gamma_1} - \log \sqrt{\gamma_2}$ is the unique solution for the elliptic equation
\[\nabla \cdot (\sqrt{\gamma_1 \gamma_2} \nabla u) = \sqrt{\gamma_1 \gamma_2} (q_1 - q_2). \]
Thus,
\[\| u \|_{H^1(\Omega)} \leq C ( \| q_1 - q_2 \|_{L^2(\Omega)} + \| u|_{\partial \Omega} \|_{H^\frac{1}{2}(\partial \Omega)}),\]
which implies the estimate in the lemma.
\end{proof}
>From the previous lemmas and the stability of the inverse problem for the Schr\"odinger equation with partial data one gets
\begin{align*}
\| \gamma_1 - \gamma_2 \|_{H^1(\Omega)} \leq C \bigg|\log \Big|\log \Big(& \big\|\Lambda^{\sharp}_{\gamma_1}-\Lambda^{\sharp}_{\gamma_2} \big\|
+ \| \gamma_1 - \gamma_2 \|_{L^\infty(\partial \Omega)}\\
 & + \| \nabla \gamma_1 - \nabla \gamma_2 \|_{L^\infty(\partial \Omega)} \Big)\Big| \bigg|^{-\frac{2\sigma}{3+3\sigma}}.
\end{align*}
This concludes the proof of stability for the Calder\'on problem with partial data.}

\begin{thebibliography}{10} %
%

\bibitem{A}
G.~Alessandrini, \textit{ Stable determination of conductivity by boundary measurements},
Appl. Anal. \textbf{27} (1988), 153--172.





\bibitem{BU} 
A.~Bukhgeim and G.~Uhlmann, \textit{Recovering a potential from partial Cauchy data}, 
Comm. Partial Differential Equations, \textbf{27} (2002), 653--668.

\bibitem{C} 
A.~P.~Calder\'on, \textit{On an inverse boundary value problem}, 
Seminar on Numerical Analysis and its Applications to Continuum Physics, Rio de Janeiro, Sociedade Brasileira de Matematica, (1980), 65--73.






\bibitem{CDSFR} 
P.~Caro, D.~Dos Santos Ferreira and A.~Ruiz, \textit{Stability estimates for the Radon transform with restricted data and applications}, 
preprint arXiv:1211.1887, 37 pages, 2012.

\bibitem{CSa}
P.~Caro and M.~Salo, \textit{Stability of the Calder\'on problem in admissible geometries}, preprint, arXiv:1404.6652, 22 pages, 2014.

\bibitem{DSFKS} 
D.~Dos Santos Ferreira, C.~E.~Kenig and M.~Salo, \textit{Determining an unbounded potential from Cauchy data in admissible geometries}, 
{Comm. Part. Diff. Equa. \textbf{38}, (2013), 50--68. }

\bibitem{DSFKSU} 
D.~Dos Santos Ferreira, C.~E.~Kenig, M.~Salo and G.~Uhlmann, \textit{Limiting Carleman weights and anisotropic inverse problems}, Invent. Math. \textbf{178} (2009), pp 119--171. 

\bibitem{DSFKSjU} 
D.~Dos Santos Ferreira,  C.~E.~Kenig, J.~Sj\"ostrand and G.~Uhlmann,  \textit{Determining a magnetic Schr\"odinger operator from partial Cauchy data},
Comm. Math. Phys., \textbf{271} (2007), 467--488.

\bibitem{DSFKLS}  
D.~Dos Santos Ferreira, Y.~Kurylev, M.~Lassas, and M.~Salo, \textit{The Calder\'on problem in transversally anisotropic geometries}, 
preprint, arXiv:1305.1273, 48 pages, 2013.

\bibitem{FSU}
B.~Frigyik, P.~Stefanov and G.~Uhlmann, \textit{The X-Ray transform for a generic family of curves and weights},  J. Geom. Anal. \textbf{18}
(2008), 81--97.

\bibitem{HW}
H.~Heck and J.~N.~Wang, \textit{Stability estimates for the inverse boundary value problem by partial Cauchy data},
Inverse Problems \textbf{22} (2006), 1787--1796.

\bibitem{Is} 
V. Isakov, \textit{On uniqueness in the inverse conductivity problem with local data}, 
Inverse Probl. Imaging 1 (2007), no. 1, 95105.

\bibitem{KS} C.~E.~Kenig and M.~Salo, 
\textit{The Calder\'on problem with partial data on manifolds and applications}, 
preprint arXiv:1211.1054, 53 pages, 2012.

\bibitem{KSU} 
C.E.~Kenig, J.~Sj\"ostrand and G.~Uhlmann,  \textit{The Calder\'on problem with partial data}, 
Ann. of Math., \textbf{165} (2007), 567--591.

\bibitem{KV} 
R.~Kohn and M.~Vogelius, \textit{ Determining conductivity by boundary measurements}, 
Comm. Pure Appl. Math., \textbf{37} (1984), 289--298. 





\bibitem{S}
L.~A.~Santal\'o, \textit{Integral geometry and geometric probability}, Cambridge Mathematical Library, Cambridge University Press, Cambridge, 2004.

\bibitem{Sh} 
V.~A.~Sharafutdinov, \textit{Integral geometry of tensor fields}, Inverse and Ill-posed Problems Series, VSP, Utrecht, 1994.

\bibitem{SaU}
M.~Salo, G.~Uhlmann, \textit{The attenuated ray transform on simple surfaces}, J. Differential Geom. \textbf{88} (2011), 161--187.

\bibitem{SU} 
P.~Stefanov and G.~Uhlmann, \textit{Stability estimates for the X-ray transform of tensor fields and boundary rigidity}, Duke Math. J. \textbf{123} (2004), 445--467.

\bibitem{SyU} 
J.~Sylvester and G.~Uhlmann, \textit{A global uniqueness theorem for an inverse boundary value problem}, 
Ann. of Math., \textbf{125} (1987), 153--169.    

\bibitem{Tz} 
L.~Tzou, \textit{Stability estimates for coefficients of the magnetic Schr\"odinger equation from full and partial boundary measurements},
Comm. Partial Differential Equations \textbf{33} (2008), 1911--1952. 

%
\end{thebibliography}
\end{document}